\theoremstyle{plain}
\newtheorem{thm}{Theorem}
\newtheorem{lem}{Lemma}[section]
\newtheorem{cor}[lem]{Corollary}
\newtheorem{prop}[lem]{Proposition}
\theoremstyle{definition}
\newtheorem{rem}[lem]{Remark}
\newtheorem{ex}[lem]{Example}
\let\ssection=\section
\renewcommand{\section}{\setcounter{equation}{0}\ssection}
\newcommand{\R}{\mathbb{R}}
\newcommand{\Z}{\mathbb{Z}}
\newcommand{\C}{\mathbb{C}}
\newcommand{\A}{\mathcal{A}}
\newcommand{\cC}{\mathcal{C}}
\newcommand{\F}{\mathcal{F}}
\newcommand{\cM}{\mathcal{M}}
\newcommand{\cP}{\mathcal{P}}
\newcommand{\cZ}{\mathcal{Z}}
\newcommand{\Id}{\mathrm{Id}}
\newcommand{\SL}{\mathrm{SL}}
\newcommand{\PSL}{\mathrm{PSL}}
\newcommand{\Qc}{\mathcal{Q}} 
\newcommand{\Rc}{\mathcal{R}}
\newcommand{\RP}{{\mathbb{RP}}}
\newcommand{\pP}{{\mathbb{P}}}
\newcommand{\CP}{{\mathbb{CP}}}
\newcommand{\half}{\frac{1}{2}}
\newcommand{\thalf}{\frac{3}{2}}
\newcommand{\e}{\varepsilon}
\def\D{\Delta}
\def\Db{\overline{\Delta}}
\def\e{\varepsilon}
\def\om{\omega}
\begin{document}

\title[2-frieze patterns and the space of polygons]{2-frieze patterns 
and the cluster structure of\\ the space of polygons}

\author{Sophie Morier-Genoud}

\author{Valentin Ovsienko}

\author{Serge Tabachnikov}

\address{Sophie Morier-Genoud,
Institut de Math\'ematiques de Jussieu
UMR 7586
Universit\'e Pierre et Marie Curie
4, place Jussieu, case 247
75252 Paris Cedex 05
}

\address{
Valentin Ovsienko,
CNRS,
Institut Camille Jordan,
Universit\'e Claude Bernard Lyon~1,
43 boulevard du 11 novembre 1918,
69622 Villeurbanne cedex,
France}

\address{
Serge Tabachnikov,
Department of Mathematics,
Pennsylvania State University,
University Park, PA 16802, USA
}

\email{sophiemg@math.jussieu.fr,
ovsienko@math.univ-lyon1.fr,
tabachni@math.psu.edu
}

\date{}

\keywords{Pentagram map, Cluster algebra, Frieze pattern, Moduli space}


\begin{abstract}
We study the space of 2-frieze patterns
generalizing that of the classical Coxeter-Conway frieze patterns.
The geometric realization of this space is the space of
$n$-gons (in the projective plane and in 3-dimensional vector space)
which is a close relative of the moduli space of genus $0$ curves
with $n$ marked points.
We show that the space of 2-frieze patterns is a cluster manifold and study its algebraic
and arithmetic properties.
\end{abstract}

\maketitle


\tableofcontents

\section{Introduction}

The space $\cC_n$ of $n$-gons in the projective plane
(over $\C$ or over $\R$) modulo projective equivalence is a close relative of the moduli space 
$\cM_{0,n}$ of genus zero curves
with $n$ marked points.  
The space $\cC_n$ was considered in \cite{Sch} and in \cite{OST} 
as the space on which the {\it pentagram map} acts.  

The main idea of this paper is to identify the space $\cC_n$ with the space $\F_n$ of
combinatorial objects that we call $2$-\textit{friezes}.
These objects first appeared in \cite{Pro} as generalization of the Coxeter friezes~\cite{Cox}.
We show that $\cC_n$ is isomorphic to $\F_n$, provided $n$ is not a multiple of~$3$.
This isomorphism leads to remarkable coordinate systems on $\cC_n$ and equips $\cC_n$
with the structure of cluster manifold.
The relation between $2$-friezes and cluster algebras is not surprising, since $2$-friezes can be viewed
as a particular case of famous recurrence relations known as the discrete Hirota equation,
or the octahedron recurrence. The particular case  of $2$-friezes is a very interesting subject;
in this paper we make first steps in the study of algebraic and combinatorial structures of the space of $2$-friezes.

The pentagram map $T:\cC_n\to\cC_n$, see \cite{Sch1,Sch} and also \cite{OST,Gli},
is a beautiful dynamical system which is a time and space discretization of the Boussinesq equation. Complete integrability of the pentagram map for a larger space of twisted $n$-gons was proved in \cite{OST}; recently, integrability of $T$ on $\cC_n$  was established, by different methods, in \cite{Sol} and \cite{OST2}. The desire to better understand the structure of the space of closed polygons was our main motivation.

\subsection{$2$-friezes}

We call a \textit{2-frieze pattern}
a grid of numbers, or polynomials, rational functions, etc.,
$(v_{i,j})_{(i,j)\in \Z^2}$
and $(v_{i+\half,j+\half})_{(i,j)\in \Z^2}$ organized as follows
$$
 \xymatrix{
 &
&\ar@{-}[rd]
& v_{i-\thalf,j+\thalf}\ar@{--}[ld]\ar@{--}[rd]
&\ar@<2pt>@{-}[ld]
&\\
&\ldots \ar@<2pt>@{-}[rd]
&v_{i-\thalf,j+\half}\ar@{--}[rd]\ar@{--}[ld]
& v_{i-1,j+1}\ar@{-}[ld]\ar@{-}[rd]
&v_{i-\half,j+\thalf}\ar@{--}[rd]\ar@{--}[ld]
&\ar@{-}[ld]\\ 
&v_{i-\thalf,j-\half}  \ar@{--}[rd]
& v_{i-1,j}\ar@<2pt>@{-}[rd]\ar@<2pt>@{-}[ld]
&v_{i-\half,j+\half}\ar@{--}[ld]\ar@{--}[rd]
& v_{i,j+1}\ar@<2pt>@{-}[ld]\ar@{-}[rd] 
&\ar@{--}[ld] \cdots\\
&v_{i-1,j-1} \ar@{-}[rd]
&v_{i-\half,j-\half} \ar@{--}[rd] \ar@{--}[ld]
&v_{i,j}\ar@{-}[ld]\ar@{-}[rd]
&v_{i+\half,j+\half}\ar@{--}[ld]\ar@{--}[rd]
&v_{i+1,j+1}\ar@{-}[ld]
 \\
& &v_{i,j-1}\ar@{-}[rd]
& v_{i+\half,j-\half}\ar@{--}[ld]\ar@{--}[rd]
&v_{i+1,j}  \ar@{-}[ld]&\\
&&&v_{i+1,j-1}&&&&&
}
$$
such that every entry is equal to the determinant
of the $2\times2$-matrix formed by its four neighbours:
$$
 \xymatrix{
& B\ar@{-}[ld]\ar@{-}[rd]
&F\ar@{--}[rd]&&&&\\ 
A \ar@<2pt>@{-}[rd]
&E\ar@{--}[ld]\ar@{--}[rd]\ar@{--}[lu]\ar@{--}[ru]
& D\ar@<2pt>@{-}[ld] \ar@{-}[ru]\ar@{-}[rd]&H&\ar@{=>}[r]
&&\!E=AD-BC, \quad D=EH-FG,
\quad\ldots\\
&C
&G\ar@{--}[ru]&&&&
}
$$
Generically, two consecutive rows in a 2-frieze pattern
determine the whole 2-frieze pattern.

The notion of 2-frieze pattern is a variant of
the classical notion of Coxeter-Conway frieze pattern~\cite{Cox,CoCo}.
Similarly to the classical frieze patterns,
2-frieze patterns constitute a particular case of the
3-dimensional octahedron recurrence:
$$
T_{i + 1, j, k}\,T_{i -1, j, k}=
T_{i , j+ 1, k}\, T_{i , j- 1, k}
-T_{i , j, k+1}\,T_{i , j, k-1},
$$
which may be called the Dodgson condensation formula (1866)
and which is also known in the mathematical physics literature
as the discrete Hirota equation (1981).
More precisely, assume $T_{-1, j, k}=T_{2, j, k}=1$
and $T_{i , j, k}=0$ for $i\leq-2$ and $i\geq3$.
Then $T_{0, j, k}$ and $T_{1, j, k}$ form a 2-frieze.
More general recurrences called the $T$-systems and their relation to cluster
algebras were studied recently, see \cite{Hen,DK2,Kel} and references therein.
In particular, periodicity and positivity results, typical for cluster algebras,
were obtained.

The above 2-frieze rule was mentioned in  \cite{Pro} as a variation on the Coxeter-Conway frieze pattern. 
What we call a 2-frieze pattern also appeared in \cite{Ber} in a form of duality on $\SL_3$-tilings. 
To the best of our knowledge, 2-frieze patterns have not been studied in detail before.

We are particularly interested in 2-frieze patterns
bounded from above and from below by a row of $1$'s and two rows of $0$'s:
$$
 \begin{matrix}
 \cdots
& 0&0&0&0&0&\cdots
 \\[4pt]
\cdots& 0&0&0&0&0&\cdots
 \\[4pt]
\cdots& 1&1&1&1&1&\cdots
 \\[4pt]
\cdots&v_{0,0}&v_{\half,\half}&v_{1,1}&v_{\frac{3}{2},\frac{3}{2}}&v_{2,2}&\cdots
 \\
&\vdots &\vdots &\vdots &\vdots &\vdots &&\\
 \cdots
& 1&1&1&1&1&\cdots \\[4pt]
\cdots& 0&0&0&0&0&\cdots
 \\[4pt]
\cdots& 0&0&0&0&0&\cdots
\end{matrix}
$$
that we call \textit{closed} 2-frieze patterns.
We call the \textit{width} of a closed pattern  the number of rows
between the two rows of~1's.
In the sequel, we will often omit the rows of $0$'s in order to simplify the notation.

We introduce the following notation:
$$
\F_n=\left\{
\hbox{closed 2-friezes of width $n-4$}
\right\}
$$
for the space of all closed (complex or real) 2-frieze patterns.
Here and below the term ``space'' is used to identify a set of
objects that we wish to endow with a geometric structure
of (algebraic, smooth or analytic) variety.
We denote by $\F^0_n\subset\F_n$ the subspace of
closed friezes of width~$n-4$ such that all their entries
are \textit{real positive}.

Along with the octahedron recurrence,
the space of all 2-frieze patterns is closely related to
the theory of cluster algebras and cluster manifolds \cite{FZ1}.
In this paper, we explore this relation.

\subsection{Geometric version: moduli spaces of $n$-gons}\label{GeomSec}

An $n$-\textit{gon} in the projective plane is given by a cyclically ordered $n$-tuple of points
$\{v_1,\ldots,v_n\}$ in $\pP^2$ such that
no three consecutive points belong to the same projective line.
In particular,  $v_i\not=v_{i+1}$, and $v_i\not=v_{i+2}$.
However, one may have $v_i=v_j$, if $|i-j|\geq3$.
We understand the $n$-tuple $\{v_1,\ldots,v_n\}$ as an infinite cyclic sequence,
that is, we assume $v_{i+n}=v_i$, for all $i=1,\ldots,n$.

We denote  the space of all $n$-gons modulo projective
equivalence  by $\cC_n$:
$$
\cC_n=
\left\{
(v_1,\ldots,v_n)\in\pP^2
\left|\;
\det(v_i,v_{i+1},v_{i+2})\not=0,\;i=1,\ldots,n
\right.
\right\}
/\PSL_3.
$$
The space $\cC_n$ is a $(2n-8)$-dimensional algebraic variety.

Similarly, one defines an $n$-gon in 3-dimensional
vector space (over $\R$ or $\C$): this is a cyclically ordered $n$-tuple 
of vectors $\{V_1,\ldots,V_n\}$ satisfying the unit determinant condition
$$
\det(V_{i-1},V_i,V_{i+1})=1
$$
for all indices $i$ (understood cyclically). 
The group $\SL_3$ naturally acts on $n$-gons. 
The space of equivalence classes is denoted by $\tilde \cC_n$; 
this is also a $(2n-8)$-dimensional algebraic variety.

Projectivization gives a natural map  $\tilde \cC_n\to \cC_n$. 
It is shown in \cite{OST} that this map is bijective if $n$ is not divisible by 3; 
see Section \ref{ClDiGo}.

We show that  the space of closed 2-frieze patterns $\F_n$ is isomorphic to
the space of polygons~$\tilde \cC_n$.
This also means that $\F_n$ is isomorphic to $\cC_n$,
provided $n$ is not a multiple of 3.

An $n$-gon $\{V_1,\ldots,V_n\}$ 
in $\R^3$ is called {\it convex} if, for each $i$, all the vertices $V_j,\ j\neq i-1,i$, 
lie on the positive side of the plane generated by $V_{i-1}$ and $V_{i}$,  
that is, $\det (V_{i-1},V_{i}, V_j)>0$.
See Figure \ref{FirstFig}.  Let $\tilde \cC_n^0\subset \tilde \cC_n$ denote
the space of convex $n$-gons in $\R^3$. 
We show that the space of positive real 2-friezes $\F^0_n$ is isomorphic to
the space of convex polygons $\tilde \cC_n^0$.

\begin{figure}[hbtp]
\includegraphics[width=6cm]{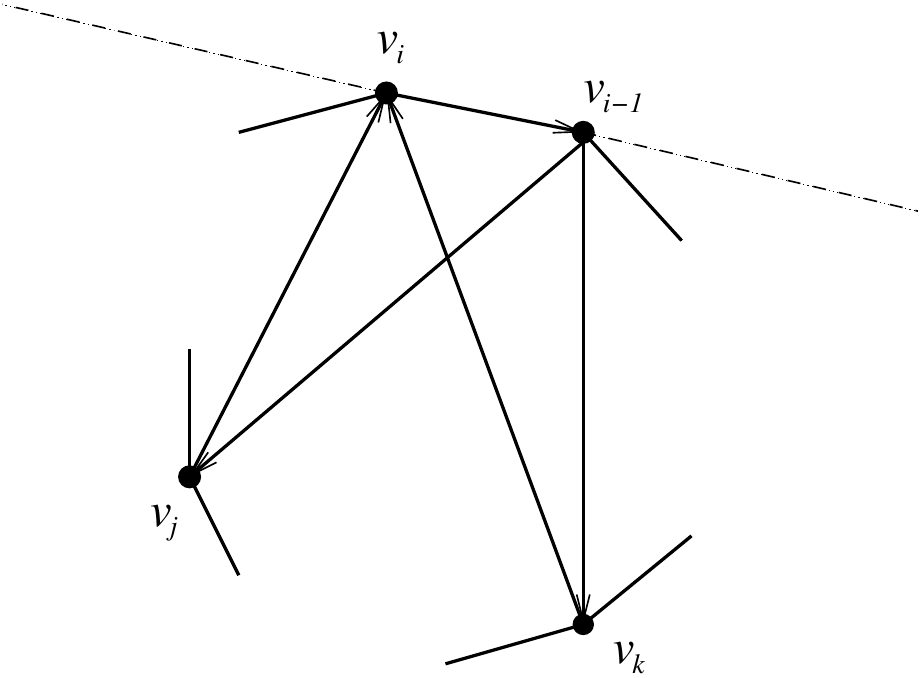}
\caption{A convex polygon.}
\label{FirstFig}
\end{figure}

\begin{rem}
\label{TwistRem}
A more general space of \textit{twisted} $n$-gons in $\pP^2$ (and similarly in 3-dimensional vector space)
was considered in \cite{Sch,OST}.
A twisted $n$-gon in $\pP^2$ is a map $\varphi:\Z\to\pP^2$ such that
no three consecutive points, $\varphi(i), \varphi(i+1), \varphi(i+2)$, belong to the same projective line and
$$
\varphi(i+n)=M(\varphi(i)),
$$
where $M\in\PSL_3$ is a fixed element, called the \textit{monodromy}.
If the monodromy is trivial, $M=\Id$, then the twisted $n$-gon is an $n$-gon in the above sense.
In \cite{Sch,OST} two different systems of coordinates were introduced 
and used to study the space of twisted $n$-gons and the transformation under the pentagram map. 
\end{rem}

\subsection{Analytic version: the space of difference equations} \label{Analvers}
Consider a difference equation of the form
\begin{equation} 
\label{recur}
V_{i}=a_i \,V_{i-1}-b_i\,V_{i-2} + V_{i-3},
\end{equation}
where $a_i,b_i\in\C$ or $\R$ are $n$-periodic:
$a_{i+n}=a_i$ and $b_{i+n}=b_i$, for all $i$.
A solution $V=(V_i)$ is a sequence of numbers $V_i\in\C$ or $\R$ satisfying (\ref{recur}).
The space of solutions of (\ref{recur}) is 3-dimensional.
Choosing three independent solutions,
we can think of~$V_i$ as vectors in $\C^3$ (or $\R^3$).
The $n$-periodicity of $(a_i)$ and~$(b_i)$
then implies that there exists a matrix $M\in\SL_3$
called the \textit{monodromy matrix}, such that 
$$
V_{i+n}=M\left(V_i\right).
$$

The space of all the equations \eqref{recur} is nothing other than the vector space $\C^{2n}$ (or $\R^{2n}$, in the real case), since
$(a_i,b_j)$ are arbitrary numbers.
The space of equations with trivial monodromy,
$M=\Id$, is an algebraic manifold of dimension $2n-8$,
since the condition $M=\Id$ gives eight polynomial equations (of degree $n-3$).

We show that  the space of closed 2-frieze patterns $\F_n$ is isomorphic to
the space of equations~\eqref{recur} with trivial monodromy.

\subsection{The pentagram map and cluster structure}

The pentagram map, $T$, see Figure \ref{penta}, was initially defined by R. Schwartz \cite{Sch1}
on the space of (convex) closed $n$-gons in $\RP^2$.
This map associates to an $n$-gon another $n$-gon formed by segments of
the shortest diagonals.
Since~$T$ commutes with the $\SL(3,\R)$-action, it is well-defined on the quotient space $\cC_n$.
Complete integrability of~$T$ on $\cC_n$ was conjectured and partially established in \cite{Sch}.

\begin{figure}[hbtp]
\includegraphics[width=9cm]{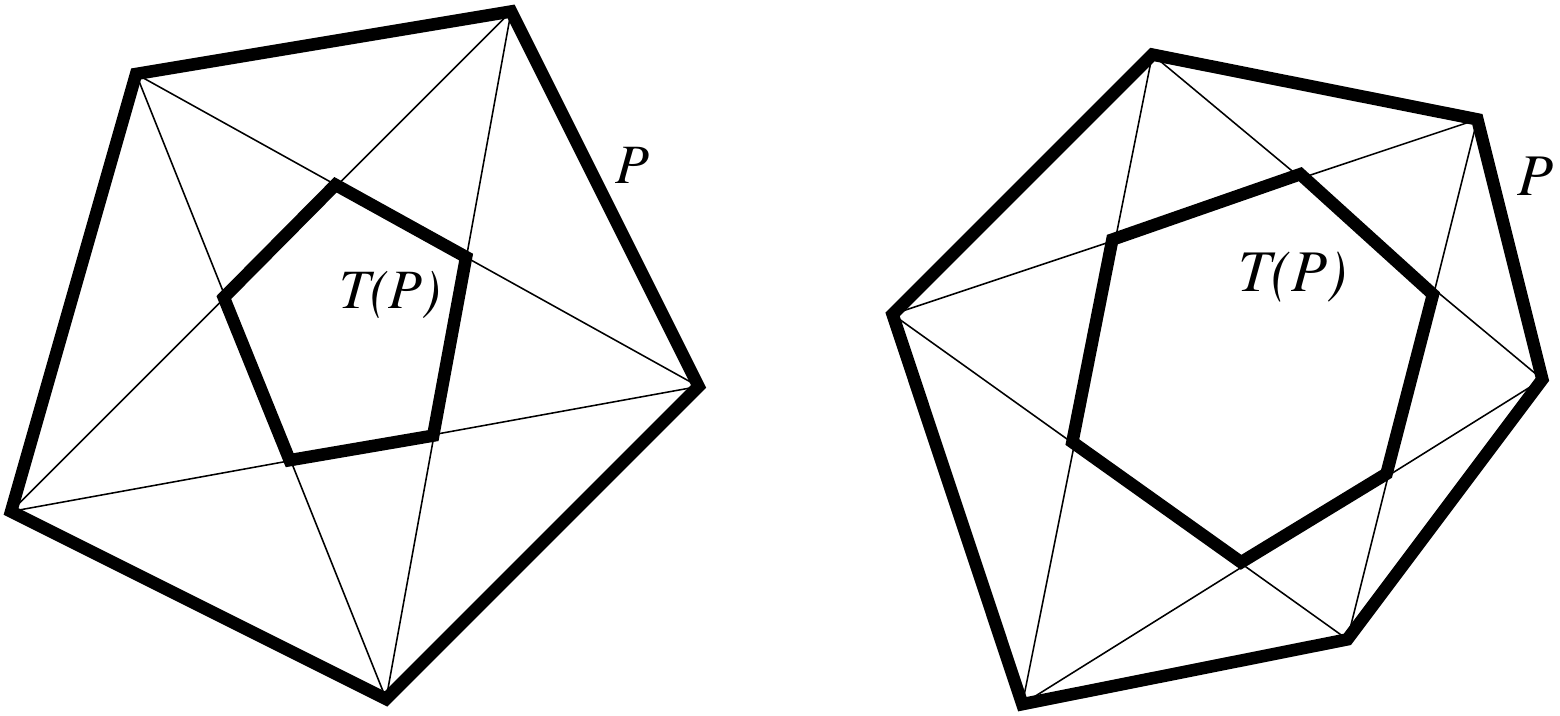}
\caption{The pentagram map.}
\label{penta}
\end{figure}

The integrability results on the pentagram map were originally established in the case of
the space of twisted $n$-gons (see Remark \ref{TwistRem}).
Complete integrability of $T$ on this space was proved in \cite{OST} and the relation to
cluster algebras was noticed.
Explicit formulas for iterated pentagram map $T^k$ were recently found \cite{Gli} using an alternative system 
of parametrizations of the twisted $n$-gons. 
These formulas involve the theory of cluster algebras and $Y$-patterns.
Here, we describe a structure of cluster manifold on the space of closed $n$-gons, which is a different question. The relation of our approach with the one by Glick deserves a thorough study; we plan to
consider this question in near future. It is not clear, at the time of writing, how the cluster structure on  the space $\cC_n$ that we describe in this paper is related to complete integrability of the map~$T$ on $\cC_n$ proved in \cite{Sol} and \cite{OST2}.

\section{Definitions and main results}

\subsection{Algebraic and numerical friezes}\label{ANuF}

It is important to distinguish the \textit{algebraic} 2-frieze patterns,
where the entries are algebraic functions,
and the \textit{numerical} ones where the entries are real numbers.

Our starting point is the algebraic frieze bounded from above by a row of 1's
(we also assume that there are two rows of 0's above the first row of 1's).
We denote by $A_i,B_i$ the entries in the first non-trivial row:
 \begin{equation}
 \label{FriezeFAB}
 \begin{matrix}
 \cdots
& 1&1&1&1&1&\cdots
 \\[4pt]
\cdots&B_i&A_i&B_{i+1}&A_{i+1}&B_{i+2}&\cdots
 \\
&\vdots &\vdots &\vdots &\vdots &\vdots &&
\end{matrix}
\end{equation}
The entries $A_i,B_i$ are considered as formal free variables.

\begin{prop}
\label{AlgProp}
The first two non-zero rows of \eqref{FriezeFAB} uniquely define an unbounded
(from below, left and right) 2-frieze pattern.
Every entry of this pattern is a polynomial in $A_i,B_i$.
\end{prop}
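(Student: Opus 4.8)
The plan is to reduce the quadratic, two-dimensional frieze rule to a one-dimensional \emph{linear} recurrence running along the diagonals of the pattern, and then to read off polynomiality by induction. Concretely, I would first fix one of the two diagonal directions of the grid and consider the sequence of entries $\ldots,v^{(k-1)},v^{(k)},v^{(k+1)},\ldots$ lying on a single diagonal. The claim I would aim to establish is that, because of the two bounding rows of $1$'s (and the flanking rows of $0$'s), such a sequence satisfies an order-$3$ linear recurrence of exactly the form \eqref{recur},
$$
v^{(k)}=A_k\,v^{(k-1)}-B_k\,v^{(k-2)}+v^{(k-3)},
$$
whose coefficients $A_k,B_k$ are the entries of the first non-trivial row. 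The two rows of $0$'s together with the row of $1$'s supply, along every diagonal, three consecutive initial values $0,0,1$, which are constants.

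Granting this, the Proposition is immediate: feeding the constant initial data $0,0,1$ into the recurrence produces $v^{(k)}=A_k$, then $v^{(k+1)}=A_{k+1}A_k-B_{k+1}$, and so on, so every entry is obtained from the previous ones by multiplications, additions and subtractions only. By induction along each diagonal, every entry is therefore a polynomial in the $A_i,B_i$; and since each step of the recurrence is forced, the two initial rows determine the whole array uniquely. This handles simultaneously the ``uniquely define'' and the ``polynomial'' parts of the statement, and shows the pattern is unbounded downward, leftward and rightward because the two given rows are already bi-infinite.

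The main obstacle is the first step, namely deriving the linear recurrence from the local rule $E=AD-BC$. Solving this rule naively for the entry below a given one requires dividing by the entry directly above, which a priori destroys polynomiality past the first interior row. The point to exploit is that along the boundary this divisor equals $1$, and the content of the derivation is to show that the division can be eliminated \emph{uniformly}, not just on the first row: combining the rule applied at two or three neighbouring cells and using the bounding rows, the quadratic relations telescope into the single linear three-term relation above. Equivalently, and this is the viewpoint suggested by the Dodgson condensation formula quoted in the Introduction, one may realise each entry as a connected minor of a banded bi-infinite matrix built from the $A_i,B_i$ (with $1$'s and $-1$'s on the outer bands), the frieze rule $E=AD-BC$ being precisely the Desnanot--Jacobi identity for adjacent minors; polynomiality is then automatic, since minors are polynomials in the matrix entries. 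Either way, the only genuine work is this bookkeeping of indices, which I expect to be routine once the diagonal direction and the matching of the boundary data to the initial conditions of the recurrence are fixed; a final consistency check verifies that the array generated by \eqref{recur} does satisfy every instance of the local determinant rule, which gives existence.
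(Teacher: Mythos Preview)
Your proposal is essentially correct and covers the same ground as the paper, but with the order of ideas reversed. The paper takes what you call the ``equivalent'' route as its primary argument: it first proves directly, by induction on the row number and Dodgson's condensation identity, that each entry equals the tridiagonal determinant
\[
v_{i,j}=\det\begin{pmatrix}
A_j & B_{j+1} & 1 & & \\
1 & A_{j+1} & B_{j+2} & 1 & \\
 & \ddots & \ddots & \ddots & \\
 & & 1 & A_{i-1} & B_i \\
 & & & 1 & A_i
\end{pmatrix},
\]
which immediately gives polynomiality and uniqueness; the linear three-term recurrence along diagonals (your main approach) is then read off as a corollary of this formula, by expanding the determinant along its last row. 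So where you propose to establish the recurrence first and deduce the determinant picture from it, the paper does the opposite, and this buys it something: the inductive step via Desnanot--Jacobi is a single clean identity, whereas deriving the order-$3$ recurrence \emph{directly} from the local rule $E=AD-BC$ (your ``main obstacle'') requires a slightly more delicate manipulation that you leave as ``routine bookkeeping.''

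One small point you should make explicit: the 2-frieze has two interleaved sublattices, integer-indexed entries $v_{i,j}$ and half-integer-indexed entries $v_{i+\frac12,j+\frac12}$, and the recurrence along half-integer diagonals has the roles of $A$ and $B$ swapped (coefficients $B_{k+1},A_k$ rather than $A_k,B_k$). Your write-up treats only one family; the other is handled identically, but should be mentioned.
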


\noindent
This statement will be proved in Section \ref{XYZ}.

We denote the defined 2-frieze by $F(A_i,B_i)$.

Given a sequence of real numbers $(a_i,b_i)_{i\in\Z}$,
we define a \textit{numerical 2-frieze pattern} $F(a_i,b_i)$ as the evaluation
$$
F(a_i,b_i)=F(A_i,B_i)\big\vert_{A_i=a_i,\,B_i=b_i}.
$$
Note that one can often recover the whole numerical frieze $F(a_i,b_i)$ directly from
the two first rows (of 1's and $(a_i,b_i)$) by applying the pattern rule
but this is not always the case.
For instance this is not the case if there are too many zeroes among $\{a_i,b_i\}$.
In other words, there exist numerical friezes that are not evaluations
of $F(A_i,B_i)$.

\begin{ex}
The following 2-frieze pattern:
$$
 \begin{matrix}
 \cdots
& 1&1&1&1&1&\cdots
 \\
\cdots&0&0&0&0&0&\cdots
\\
\cdots&0&0&0&0&0&\cdots
\\
\cdots&0&0&0&0&0&\cdots
\\
 \cdots
& 1&1&1&1&1&\cdots
\end{matrix}
$$
is not an evaluation of some $F(A_i,B_i)$.
Indeed, if $a_i=b_i=0$ for all $i\in\Z$, then the 4-th row in $F(a_i,b_i)$
has to be a row of 1's.
This follows from formula \eqref{FirstVal} below.
\end{ex}

The above example is not what we called a numerical 2-frieze pattern
and we will not consider such friezes in the sequel.
We will restrict our considerations to evaluations of $F(A_i,B_i)$.

\subsection{Closed frieze patterns}

A numerical 2-frieze pattern $F(a_i,b_i)$ is closed if 
it contains a row of 1's followed by two rows of zeroes:
\begin{equation}
\label{DefClo}
\begin{matrix}
 \cdots
& 1&1&1&1&1&\cdots
 \\[4pt]
\cdots&b_i&a_i&b_{i+1}&a_{i+1}&b_{i+2}&\cdots
 \\
&\vdots &\vdots &\vdots &\vdots &\vdots &&
\\
 \cdots
& 1&1&1&1&1&\cdots
\\
 \cdots
& 0&0&0&0&0&\cdots
\\
 \cdots
& 0&0&0&0&0&\cdots
 \\
&\vdots &\vdots &\vdots &\vdots &\vdots &&
\end{matrix}
\end{equation}

The following statement is proved in Section \ref{NumSec}.

\begin{prop}
\label{Period}
A closed $2$-frieze pattern of width $(n-4)$ has the following properties.

(i)
It is $2n$-periodic in each row, i.e., $v_{i+n,j+n}=v_{i,j}$,
in particular $a_{i+n}=a_i$ and $b_{i+n}=b_i$.

(ii)
It is $n$-periodic in each diagonal,
i.e., $v_{i+n,j}=v_{i,j}$ and $v_{i,j+n}=v_{i,j}$.

(iii)
It satisfies the following additional glide symmetry:
$v_{i,j}=v_{j+n-\frac{5}{2},\,i+\frac{5}{2}}$.
\end{prop}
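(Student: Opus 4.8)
The plan is to trade the quadratic frieze rule for the linear recurrence \eqref{recur} and to read every assertion off the monodromy of that recurrence. In coordinates the determinant rule reads
\[
v_{i,j}=v_{i-\half,j-\half}\,v_{i+\half,j+\half}-v_{i-\half,j+\half}\,v_{i+\half,j-\half},
\]
which is the three-term Pl\"ucker (Dodgson) relation among the maximal minors of a $3\times\infty$ matrix. So the first step is to make precise the dictionary underlying Proposition \ref{AlgProp} and the analytic picture of Section \ref{Analvers}: three consecutive transversal diagonals of $F(a_i,b_i)$ assemble into a sequence $(V_i)_{i\in\Z}$, $V_i\in\C^3$, solving \eqref{recur} and normalised by $\det(V_{i-1},V_i,V_{i+1})=1$, and every entry is a maximal minor
\[
v_{i,j}=\det(V_p,V_q,V_s),\qquad a_i=\det(V_{i-3},V_{i-2},V_i),\quad b_i=\det(V_{i-3},V_{i-1},V_i),
\]
where the left endpoint $p$ of the window is controlled by $j$, the right endpoint $s$ by $i$, and the middle index $q$ selects one of the two families (integer versus half-integer entries). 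With this dictionary the top boundary is automatic: the row of $1$'s is the Wronskian normalisation, and the two rows of $0$'s are where the window is too short to carry a $3\times3$ minor.

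The heart of the matter is to translate \emph{closedness} through the same dictionary. Reading the second row of $1$'s and the two following rows of $0$'s at width $n-4$ yields, for every $i$, the relations $V_{i+n}\in\langle V_{i-1},V_i\rangle$, $V_{i+n-1}\in\langle V_{i-2},V_{i-1}\rangle$ and $\det(V_{i-1},V_i,V_{i+n-2})=1$. Feeding these memberships into \eqref{recur} for three consecutive indices, and using that $(V_{i-2},V_{i-1},V_i)$ is a basis, forces the expansion coefficients to be trivial, i.e. $V_{i+n}=V_i$ for all $i$; equivalently the coefficients are $n$-periodic and the monodromy is $M=\Id$. This forcing is a finite but genuinely delicate linear-algebra computation. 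Granting it, part (ii) is immediate, since shifting a single endpoint gives $v_{i+n,j}=\det(V_p,V_q,V_{s+n})=\det(V_p,V_q,V_s)=v_{i,j}$, and symmetrically for $j$; the periodicities $a_{i+n}=a_i$, $b_{i+n}=b_i$ follow from the displayed determinant formulas, and part (i) follows from (ii) because $v_{i+n,j+n}=v_{i,j+n}=v_{i,j}$.

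The subtle point, and the one I expect to be the main obstacle, is the glide symmetry (iii). The map $(i,j)\mapsto(j+n-\tfrac52,\,i+\tfrac52)$ interchanges the two index families, so on minors it reverses the orientation of the triple rather than translating it; one checks it sends each row $r=i-j$ to the row $(n-5)-r$, exchanging the two bounding rows of $1$'s and the two pairs of bounding rows of $0$'s, so it is at least a candidate automorphism. To realise it I would introduce the dual sequence $V_i^{*}=V_i\times V_{i+1}$, which solves the adjoint of \eqref{recur} and satisfies $\langle V_i^{*},V_j\rangle=\det(V_i,V_{i+1},V_j)$; for a closed frieze ($M=\Id$) it is again $n$-periodic and represents the dual polygon under projective duality in $\pP^2$, and its minors are exactly the minors of $(V_i)$ with reversed index order. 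The glide is then the concrete incarnation of the fact that a closed polygon and its projective dual define the same frieze. The hard part is purely bookkeeping but error-prone: one must track the offset between the integer and half-integer families and verify that the duality, combined with $V_{i+n}=V_i$, produces precisely the shifts $n-\tfrac52$ and $+\tfrac52$ rather than some other translate. As a consistency check, squaring (iii) must reproduce the row-period $(i,j)\mapsto(i+n,j+n)$ of part (i), which it indeed does.
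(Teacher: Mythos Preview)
Your strategy is sound and genuinely different from the paper's. The paper never leaves the frieze: it uses the diagonal recurrences of Proposition~\ref{proprec} to compute the last non-trivial entry $v_{i+n-5,i}$ two ways, reads off $b_{i+n-2}=b_{i-2}$ (hence part~(i)), then continues down the same diagonal through the bottom row of $1$'s and the two rows of $0$'s to get $v_{i+n,i}=v_{i,i}$ (part~(ii)), and finally obtains the glide on the outermost non-trivial rows from $v_{i+n-5,i}=b_{i-2}$ and propagates inward by the $2$-frieze rule. In particular, the paper proves Proposition~\ref{Period} \emph{first} and only afterwards uses part~(ii) to establish the trivial-monodromy direction of Theorem~\ref{MainOne}. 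You reverse this: you invoke the minor formula of Lemma~\ref{DetLem}, translate closedness into $V_{i+n}=V_i$, and then read off all three parts. What you gain is conceptual clarity and an immediate proof of half of Theorem~\ref{MainOne}; what the paper gains is a completely self-contained frieze argument that never names the monodromy.

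Two remarks on your execution. First, the ``delicate linear-algebra computation'' you flag is shorter than you suggest: the two bottom rows of $0$'s give $V_{k+n}\in\langle V_k,V_{k+1}\rangle$ and $V_{k+n}\in\langle V_{k-1},V_k\rangle$, whose intersection is $\langle V_k\rangle$, so $V_{k+n}=\lambda_k V_k$; then the bottom row of $1$'s gives $\det(V_{k-1},V_k,\lambda_{k-2}V_{k-2})=\lambda_{k-2}=1$. No appeal to \eqref{recur} with possibly non-periodic coefficients is needed. Second, for part~(iii) the dual-sequence apparatus is more than you need. With $V_{k+n}=V_k$ in hand, simply apply both halves of Lemma~\ref{DetLem}: for integer $(i,j)$, the entry $v_{j+n-\frac52,\,i+\frac52}$ lies in the half-integer grid, and the half-integer formula with $i'=j+n-2$, $j'=i+3$ gives $\det(V_{j+n-3},V_{j+n-2},V_i)=\det(V_{j-3},V_{j-2},V_i)=v_{i,j}$. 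The bookkeeping you worry about is exactly this one substitution, and your consistency check (that the glide squared is the row shift of part~(i)) is then automatic.
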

\noindent
The statement of part (iii) means that, after $n$ steps,
the pattern is reversed with respect to the horizontal symmetry axis.

As a consequence of Proposition \ref{Period},
a closed 2-frieze of width $n-4$ consists of
 periodic blocks of size $(n-2)\times2n$.
Taking into account the symmetry of part (iii), the 2-frieze
is determined by a fragment of size
$(n-2)\times{}n$.

\begin{ex}
\label{FirstEx}
(a)
The following fragment completely determines a closed 2-frieze
of width 2.
$$
\begin{array}{rrrrrrr|rrrrrrrr}
\ldots&1&1&1&1&1&1&1&1&1&1&1&1&\ldots\\
\ldots&1&1&4&6&2&1  & 2&3&2&2&4&3&\ldots \\
\ldots&2&3&2&2&4&3 &1&1&4&6&2&1&\ldots\\
\ldots&1&1&1&1&1&1&1&1&1&1&1&1&\ldots
\end{array}
$$
The additional symmetry from Proposition~\ref{Period}, part (iii),
switches the rows every 6 steps.

(b)
The following integral numerical 2-frieze pattern
$$
\begin{array}{rrrrrrr|rrrrrrrr}
\ldots&1&1&1&1&1&1&1&1&1&1&1&1&\ldots\\
\ldots&1&3&5&2&1&3& 5&2&1&3&5&2&\ldots\\
\ldots&5&2&1&3&5&2 &1&3&5&2&1&3&\ldots\\
\ldots&1&1&1&1&1&1&1&1&1&1&1&1&\ldots
\end{array}
$$
is closed of width 2.
This corresponds to $n=6$ so that this 2-frieze pattern is understood as 12-periodic 
(and not as 4-periodic!).
\end{ex}

\subsection{Closed 2-friezes, difference equations and $n$-gons} \label{ClDiGo}

Consider an arbitrary numerical 2-frieze pattern $F(a_i,b_i)$.
By Proposition \ref{Period}, a necessary condition of closeness is:
$$
a_{i+n}=a_i,
\qquad
b_{i+n}=b_i
$$
that we assume from now on.
We then say that $F(a_i,b_i)$ is $2n$-\textit{periodic}.
 
Associate to $F(a_i,b_i)$ the difference equation \eqref{recur}.
The first main result of this paper is the following criterion of closeness.
The statement is very similar to a result of \cite{CoCo}.

\begin{thm}
\label{MainOne}
A $2n$-periodic 2-frieze pattern $F(a_i,b_i)$ is closed if and only if
the monodromy of the corresponding difference equation \eqref{recur} is trivial:
$
M=\Id.
$
\end{thm}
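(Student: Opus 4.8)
The plan is to pass from the combinatorial frieze to the associated sequence of solution vectors of \eqref{recur} and to read off closedness as a statement about the monodromy $M$. Fix three independent solutions and regard each $V_i$ as a vector in $\C^3$; since
$$\det(V_{i-1},V_i,V_{i+1})=\det(V_{i-2},V_{i-1},V_i)$$
is constant in $i$, I normalize the lift so that this common value is $1$, which is exactly the top row of $1$'s of the frieze. The key input, to be established together with Proposition \ref{AlgProp} and formula \eqref{FirstVal}, is that every entry of $F(a_i,b_i)$ is a $3\times 3$ determinant $\det(V_p,V_q,V_r)$ of three of these vectors, organized so that the depth of a row (its distance below the top row of $1$'s) equals the index spread $r-p$ minus $2$, the two interlaced families in a row of spread $s$ being $\det(V_p,V_{p+1},V_{p+s})$ and $\det(V_p,V_{p+s-1},V_{p+s})$. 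For instance the $(a_i,b_i)$-row is the spread-$3$ row, with $a_i=\det(V_{i-3},V_{i-2},V_i)$ and $b_i=\det(V_{i-3},V_{i-1},V_i)$. A closed frieze of width $n-4$ then has its lower row of $1$'s at spread $n-1$ and the two rows of $0$'s below it at spreads $n$ and $n+1$, so these three rows involve precisely the vectors $V_{p+n-1}=MV_{p-1}$, $V_{p+n}=MV_p$ and $V_{p+n+1}=MV_{p+1}$.

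For the forward direction, assume the frieze is closed. Vanishing of the spread-$n$ row gives $\det(V_p,V_{p+1},MV_p)=0$, i.e. $MV_p\in\langle V_p,V_{p+1}\rangle$ for all $p$, and vanishing of the spread-$(n+1)$ row gives $\det(V_p,V_{p+1},MV_{p+1})=0$, i.e. $MV_p\in\langle V_{p-1},V_p\rangle$. Since $V_{p-1},V_p,V_{p+1}$ are linearly independent, these two planes meet exactly in the line $\langle V_p\rangle$, so $MV_p=\lambda_p V_p$: every $V_p$ is an eigenvector of $M$. Finally the lower row of $1$'s, at spread $n-1$, reads $\det(V_p,V_{p+1},MV_{p-1})=\lambda_{p-1}\det(V_{p-1},V_p,V_{p+1})=\lambda_{p-1}=1$; hence $MV_p=V_p$ for every $p$, and as the $V_p$ span $\C^3$ we conclude $M=\Id$.

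For the converse I would simply substitute $V_{i+n}=V_i$ and evaluate the three boundary rows: at spread $n-1$, $\det(V_p,V_{p+1},V_{p-1})=1$; at spread $n$, $\det(V_p,V_{p+1},V_p)=0$; at spread $n+1$, $\det(V_p,V_{p+1},V_{p+1})=0$, together with the analogous identities for the second interlaced family, so the frieze acquires the required row of $1$'s followed by two rows of $0$'s and is closed. The main obstacle is not this monodromy bookkeeping but the structural input behind it: proving the determinant formula \eqref{FirstVal} for all entries and pinning down the exact indexing, in particular that the depth of a row matches its spread for both interlaced families. Once that dictionary is in place everything reduces to the elementary determinant identities above. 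It is worth noting that the two rows of $0$'s alone only force $M$ to be scalar, and $\det M=1$ would then leave $M=\lambda\,\Id$ with $\lambda^3=1$; it is the lower row of $1$'s that selects $\lambda=1$, which is precisely the source of the restriction that $n$ not be a multiple of $3$ appearing elsewhere in the paper.
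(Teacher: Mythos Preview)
Your argument is correct and is a genuinely different route from the paper's. Both directions in your proof rest on the vector-determinant formula for the entries (this is Lemma~\ref{DetLem}/formula~\eqref{MalDetEq} in the paper, not \eqref{FirstVal}, which is just a display of the first few polynomial rows). Once that dictionary is in place, your forward direction is a clean eigenvector argument: the two zero rows force every $V_p$ to be an eigenvector of $M$, and the row of $1$'s pins the eigenvalue to $1$. Your converse is the obvious substitution $V_{i+n}=V_i$ into the determinant formulas for the three boundary rows.

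The paper proceeds differently. For the forward direction it invokes Proposition~\ref{Period}(ii), which says the South-East diagonals $\Db_j$ of a closed frieze are $n$-periodic; since those diagonals are themselves solutions of \eqref{recur} (Proposition~\ref{proprec}), three consecutive ones give three independent periodic solutions, whence $M=\Id$. For the converse the paper again works through the diagonals: the integer diagonals $\Delta_i$ satisfy \eqref{recur} and the half-integer ones satisfy the dual equation \eqref{rec2}, so trivial monodromy makes all diagonals periodic and the initial block $0,0,1$ (augmented above the top row of $1$'s) reappears, forcing closedness. The trade-off is that the paper never needs the explicit entry formula, only the recurrences on diagonals, whereas your approach is more direct but leans on Lemma~\ref{DetLem} as the structural input---exactly the ``main obstacle'' you flagged. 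Your closing remark about cube roots of unity is apt: it is precisely the row of $1$'s, not just the zero rows, that rules out a nontrivial scalar monodromy.
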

\noindent
This theorem will be proved in Section \ref{NumSec}.

The variety $\F_n$
of closed 2-frieze patterns \eqref{DefClo} is thus identified with the space
of difference equations (\ref{recur}) with trivial monodromy.
The latter space was considered in \cite{OST}.
In particular, the following geometric realization holds.

\begin{prop}
\label{OSTProp}
\cite{OST}
The space of difference equations (\ref{recur}) 
with trivial monodromy is isomorphic to the space of $\SL_3$-equivalence classes of polygons 
$\tilde \cC_n$ in $3$-space.
If $n$ is not divisible by~$3$, then this space is also isomorphic to the space $\cC_n$ of 
projective equivalence classes of polygons in the projective plane.
\end{prop}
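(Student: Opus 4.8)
The plan is to establish the claimed isomorphism in two stages, matching the two-part structure of the statement. First I would describe the space of difference equations~\eqref{recur} with trivial monodromy directly in terms of polygons in $3$-space. Given such an equation with $M=\Id$, the solution space is $3$-dimensional and $n$-periodic, so choosing a basis identifies each solution value $V_i$ with a vector in $\C^3$ (or $\R^3$) satisfying $V_{i+n}=V_i$. The recurrence coefficient of $V_{i-3}$ in~\eqref{recur} being~$1$ is exactly what forces the determinant normalization: expanding $\det(V_{i-1},V_i,V_{i+1})$ and using \eqref{recur} to rewrite $V_{i+1}=a_{i+1}V_i-b_{i+1}V_{i-1}+V_{i-2}$ shows that $\det(V_{i-1},V_i,V_{i+1})=\det(V_{i-2},V_{i-1},V_i)$, so this determinant is constant along the sequence. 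A change of basis by $\SL_3$ preserves these determinants and acts on the space of solutions, while a change of basis scaling by $\GL_3$ rescales them; the upshot is that after normalizing we may take $\det(V_{i-1},V_i,V_{i+1})=1$, exactly the unit-determinant condition defining an $n$-gon in $\tilde\cC_n$.

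Second I would verify that this correspondence is a bijection between the two spaces, descending to the respective quotients. In one direction, an equation with trivial monodromy produces, via the normalized basis of solutions, a well-defined point of $\tilde\cC_n$; the residual ambiguity in the choice of basis is precisely the $\SL_3$-action, so the map to $\tilde\cC_n$ is well-defined on equivalence classes. Conversely, given an $n$-gon $\{V_1,\dots,V_n\}$ in $3$-space with $\det(V_{i-1},V_i,V_{i+1})=1$, I would recover the coefficients $a_i,b_i$ by solving the linear system expressing $V_i$ in the basis $\{V_{i-1},V_{i-2},V_{i-3}\}$ (independent since consecutive determinants are nonzero), using Cramer's rule: the coefficient of $V_{i-3}$ comes out to be $\det(V_{i-1},V_{i-2},V_i)/\det(V_{i-1},V_{i-2},V_{i-3})$, which equals~$1$ by the normalization, so the equation indeed has the form~\eqref{recur}. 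Cyclicity $V_{i+n}=V_i$ of the polygon forces the monodromy to be trivial. These two constructions are mutually inverse, giving the isomorphism with $\tilde\cC_n$.

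For the second assertion, I would invoke the projectivization map $\tilde\cC_n\to\cC_n$ discussed earlier in the excerpt, which is stated there to be a bijection when $3\nmid n$; composing gives the isomorphism with $\cC_n$ in that case. The point is that $\SL_3$-equivalence classes in $3$-space project to $\PSL_3$-equivalence classes in $\pP^2$, and the obstruction to this being injective is the ambiguity of rescaling an $\SL_3$-lift by a cube root of unity, which is controlled exactly by divisibility of $n$ by~$3$.

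The main obstacle I expect is the careful bookkeeping in the bijection onto $\tilde\cC_n$, namely showing that the normalization $\det(V_{i-1},V_i,V_{i+1})=1$ can always be achieved \emph{compatibly} and that it pins down the coefficient of $V_{i-3}$ to be~$1$ rather than merely constant. This requires checking that the constant value of the consecutive determinants can be scaled to~$1$ by an $\GL_3$ change of basis while retaining an $\SL_3$-normalized representative, and tracking how the residual finite ambiguity interacts with the cyclic condition. Since this is attributed to \cite{OST}, I would lean on that reference for the delicate part and focus my exposition on making the determinant identity and the Cramer's-rule computation transparent.
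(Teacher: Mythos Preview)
Your treatment of the first assertion---difference equations with trivial monodromy versus $\SL_3$-classes of $n$-gons in $3$-space---matches the paper's proof essentially line for line: both argue that $\det(V_{i-1},V_i,V_{i+1})$ is constant along the sequence (via the recurrence), normalize it to $1$, and observe that the coefficient of $V_{i-3}$ is then forced to be exactly $1$, with the residual $\SL_3$-ambiguity accounting for the quotient. Your Cramer's-rule computation is a slightly more explicit version of what the paper leaves implicit.

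For the second assertion your approach diverges, and there is a gap. You propose to ``invoke'' the bijectivity of $\tilde\cC_n\to\cC_n$ as something already stated, and offer a heuristic that the obstruction to \emph{injectivity} is rescaling by cube roots of unity. But bijectivity of this projection is precisely the content of the second sentence of the proposition, so you cannot assume it; and your cube-root remark addresses only injectivity, not surjectivity. The paper instead constructs the inverse map explicitly: given an $n$-gon $(v_i)$ in $\pP^2$, choose arbitrary lifts $\tilde V_i$ and seek rescalings $V_i=t_i\tilde V_i$ with unit consecutive determinants. This yields the cyclic system
\[
t_{i-1}\,t_i\,t_{i+1}=1/\det(\tilde V_{i-1},\tilde V_i,\tilde V_{i+1}),\qquad i\in\Z/n\Z,
\]
which has a unique solution precisely when $3\nmid n$ (the linear system in $\log t_i$ is governed by a circulant whose invertibility depends on $n\bmod 3$). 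Constructing the inverse this way delivers injectivity and surjectivity simultaneously, which your sketch does not. Incidentally, the ``main obstacle'' you flag---achieving the normalization $\det=1$ compatibly---is not where the difficulty lies; a single global rescaling handles it. The genuine work is in the second assertion, and that is where your proposal is thin.
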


\noindent
For completeness, we give a proof   in Section \ref{NumSec}.

It follows from Theorem \ref{MainOne} that the variety $\F_n$ is isomorphic to
$\tilde \cC_n$, and also to $\cC_n$, provided~$n$ is not a multiple of 3.
In order to illustrate the usefullness of this isomorphism, in Section \ref{ConvSubsect},
we prove the following statement.

\begin{prop}
\label{PosConvPro}
All the entries of a real $2$-frieze pattern are positive
if and only if the corresponding $n$-gon in $\R^3$ is convex.
\end{prop}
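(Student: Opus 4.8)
The plan is to translate the positivity of the 2-frieze entries into a determinantal condition on the associated $n$-gon $\{V_1,\ldots,V_n\}$ in $\R^3$, using the correspondence between $\F_n$ and $\tilde\cC_n$ established via Theorem~\ref{MainOne} and Proposition~\ref{OSTProp}. The key is to identify the entries $v_{i,j}$ of the frieze with $3\times 3$ determinants of triples of the vectors $V_k$. Recall that the difference equation \eqref{recur} has $V_i=a_iV_{i-1}-b_iV_{i-2}+V_{i-3}$ with the normalization $\det(V_{i-1},V_i,V_{i+1})=1$, so that $a_i=\det(V_{i-2},V_{i-1},V_{i+1})$ and $b_i=\det(V_{i-2},V_{i},V_{i+1})$ by solving Cramer-style for the coefficients. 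This suggests the general ansatz that each entry of the frieze is a determinant $\det(V_k,V_l,V_m)$ for a suitable triple of indices depending on $(i,j)$, and I would first prove this identification for all entries.

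First I would set up the determinantal formula explicitly: I claim that the entries along the two families of diagonals are given by $v_{i,j}=\det(V_p,V_q,V_r)$ where the triple $(p,q,r)$ is read off from the position $(i,j)$, with the bounding row of $1$'s corresponding to consecutive triples $\det(V_{k-1},V_k,V_{k+1})=1$ (the normalization) and the rows of $0$'s corresponding to degenerate triples with a repeated index. The frieze rule $E=AD-BC$ should then reduce to a \emph{three-term Pl\"ucker relation} among the $3\times 3$ minors: for vectors in $\R^3$ one has identities of the form
$$
\det(V_a,V_b,V_d)\det(V_a,V_c,V_e)-\det(V_a,V_b,V_e)\det(V_a,V_c,V_d)=\det(V_a,V_b,V_c)\det(V_a,V_d,V_e),
$$
and the normalization that the relevant ``$\det(V_a,V_b,V_c)$'' factor equals $1$ is exactly what makes the determinant grid satisfy the unnormalized octahedron/frieze recurrence. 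Verifying that the combinatorics of the index shifts in the frieze matches this Pl\"ucker identity is the technical heart of the identification, and I expect this to be the main obstacle: one must track the half-integer indexing of the two interlaced grids $(v_{i,j})$ and $(v_{i+\half,j+\half})$ carefully and check that the two families of minors close up consistently with the glide symmetry of Proposition~\ref{Period}(iii).

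Once the identification $v_{i,j}=\det(V_p,V_q,V_r)$ is in place, the proposition becomes almost immediate. By definition, the $n$-gon is convex precisely when $\det(V_{i-1},V_i,V_j)>0$ for all $i$ and all $j\neq i-1,i$; these are exactly the determinants $\det(V_{k-1},V_k,V_m)$ with $k-1,k$ consecutive, which form one distinguished family of frieze entries. So convexity immediately gives positivity of that family, and I would then argue that \emph{all} frieze entries are determinants of this convex type after using the $\SL_3$-action to reduce to consecutive-pair anchors, or alternatively derive positivity of the remaining entries inductively from the positive ones via the frieze rule together with the Pl\"ucker relation above (a product and difference of positive quantities arranged so that the outcome stays positive). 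For the converse, positivity of all entries forces in particular every $\det(V_{i-1},V_i,V_j)>0$, which is the definition of convexity.

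The cleanest route for the forward direction is therefore: (a) express every entry as a determinant of three vertices; (b) show that for a convex polygon every such triple, when reduced modulo $\SL_3$ and the cyclic structure, is of the form controlled by the convexity inequalities, so all entries are positive; (c) for the reverse, read convexity off directly as the positivity of the specific sub-family of entries of the form $\det(V_{i-1},V_i,V_j)$. I would not expect the positivity bookkeeping in step (b) to require genuinely new ideas beyond the determinantal identification, so the whole argument hinges on establishing step (a) rigorously, i.e.\ on the combinatorial matching between frieze positions and vertex triples.
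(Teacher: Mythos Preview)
Your strategy is correct in outline and matches the paper's, but you are making it harder than necessary in two respects.

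First, the paper proves the determinantal formula (its Lemma~\ref{DetLem}) not via Pl\"ucker relations but by a one-line induction along diagonals: by Proposition~\ref{proprec} each diagonal $\overline{\Delta}_j$ of the frieze satisfies the \emph{same} linear recurrence \eqref{recur} as the vectors $V_i$ themselves, and the boundary rows $0,0,1$ match the initial values $|V_{j-3},V_{j-2},V_{j-3}|=0$, $|V_{j-3},V_{j-2},V_{j-2}|=0$, $|V_{j-3},V_{j-2},V_{j-1}|=1$. This gives
\[
v_{i,j}=|V_{j-3},V_{j-2},V_i|,\qquad v_{i-\frac12,j-\frac12}=|V_{i-1},V_i,V_{j-3}|
\]
directly, with no Pl\"ucker bookkeeping and no need to track the half-integer grid against a quadratic identity. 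Your Pl\"ucker verification would also work (and is in fact exactly the computation behind Proposition~\ref{LinComb}), but the linear route is shorter and avoids the ``main obstacle'' you anticipate.

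Second, and more importantly, look at the form of the two determinants above: in \emph{both} cases two of the three indices are consecutive. So there are no ``remaining entries'' beyond the consecutive-pair type; every entry of the frieze is literally one of the determinants $\det(V_{k-1},V_k,V_m)$ appearing in the definition of convexity. Your step~(b) (reducing general triples to convex-type triples, or propagating positivity inductively) is therefore vacuous, and the proposition is immediate once the formula is established: all entries positive $\Longleftrightarrow$ all $\det(V_{k-1},V_k,V_m)>0$ $\Longleftrightarrow$ convexity. This is exactly how the paper dispatches it in two lines in Section~\ref{ConvSubsect}.
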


We understand convex $n$-gons in $\RP^2$ as polygons that lie in an affine chart and are convex therein.
If $n$ is not a multiple of 3, then convexity of an $n$-gon in $\R^3$ is equivalent to convexity of its projection to $\RP^2$, see Section \ref{ConvSubsect}.
In Section \ref{FGSec}, we show that the space of convex $3m$-gons is isomorphic to
the space of pairs of $2m$-gons inscribed one into the other.
This space was studied by Fock and Goncharov \cite{FG, FG1}.

\subsection{Cluster structure}
The theory of cluster algebras introduced and developed by Fomin and Zelevinsky
\cite{FZ1}-\cite{FZ4}
is a powerful tool for the study of many classes of algebraic varieties.
This technique is crucial for the present paper.
Note that the relation of octahedron recurrence and $T$-systems to cluster algebras
is well-known, see, e.g., \cite{Volk,DiF,DK1,DK2,Kel}.
Some of our statements are very particular cases of known results
and are given here in a more elementary way for the sake of completeness.

It was first proved in~\cite{CaCh} that the space
of the classical Coxeter-Conway friezes has a cluster structure
related to the simplest Dynkin quiver $A_n$
(see also Appendix).
In Section \ref{ClustSect}, we prove a similar result.

Consider  the following oriented graph (or quiver) that we denote by $\Qc$:
\begin{equation}
\label{Graph}
\xymatrix{
&1\ar@{->}[r]
&2\ar@{<-}[r]\ar@{->}[d]
&3\ar@{->}[r]
&\cdots
&\cdots \ar@{<-}[r]
&n-5\ar@{->}[r]
&n-4\ar@{->}[d]\\
&
n-3\ar@{<-}[r]\ar@{->}[u]
&n-2\ar@{->}[r]
&n-1\ar@{->}[u]\ar@{<-}[r]
&\cdots&\cdots \ar@{->}[r]
&2n-9\ar@{->}[u]\ar@{<-}[r]
&2n-8\\
}
\end{equation}
if $n$ is even, or with the opposite orientation of the last square if $n$ is odd
(note that in the case $n=5$ the graph consists only in two vertices linked by one arrow).
The graph~$\Qc$ is the product of two Dynkin quivers: $\Qc=A_2*A_{n-4}$.

\begin{ex}
The graph (\ref{Graph}) is a particular case of the graphs
related to the cluster structure on Grassmannians, see \cite{Sco}.
The cluster algebra considered in Section \ref{ClustSect}
can be viewed as a specialization of the one considered by Scott.
In particular, for $n=5$, this is nothing else but the simplest Dynkin graph~$A_2$.
For $n=6,7,8$, the graph $\Qc$ is equivalent (by a series of mutations) to $D_4 ,E_6,E_8$, respectively.
The graph $\Qc$ is of infinite type for $n\geq9$.
For $n=9$, the graph $\Qc$ is equivalent to the infinite-type graph $E_8^{1,1}$.
The relation of our approach with the one by Scott deserves a thorough study.
\end{ex}

The cluster algebra associated to $\Qc$ is of infinite type for $n\geq9$.
In Section \ref{FunClo}, we define a finite subset $\cZ$ in the set of all clusters
associated to $\Qc$.
More precisely, the subset $\cZ$ is the set of all clusters that can be obtained from
the initial cluster by series of mutations at vertices that do not  belong to two 3-cycles.

\begin{thm}
\label{MainSecond}
(i)
The cluster coordinates 
$(x_1,\ldots,x_{n-4},y_1,\ldots,y_{n-4})_\zeta$, where $\zeta \in \cZ$,
define a biregular isomorphism between $(\C^*)^{2n-8}$ 
and a Zariski open subset of $\F_n$.

(ii)
The coordinates $(x_1,\ldots,x_{n-4},y_1,\ldots,y_{n-4})_\zeta$ 
restrict to a bijection between $\R_{>0}^{2n-8}$ and~$\F_n^0$.
\end{thm}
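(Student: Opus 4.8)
The plan is to regard each seed $\zeta\in\cZ$ as a coordinate chart arising from a distinguished set of $2n-8$ entries of the frieze, and to deduce the isomorphism by combining the Laurent phenomenon for the cluster algebra $\A(\Qc)$ with the reconstruction of a frieze from initial data. Throughout I use two structural facts set up beforehand: the $2$-frieze rule $E=AD-BC$ is exactly an exchange relation of $\A(\Qc)$, so that every entry of a frieze in $\F_n$ occurs as a cluster variable, and the coordinates $(x_i,y_i)_\zeta$ are precisely the entries occupying the vertices of the quiver in the seed $\zeta$.

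\emph{Part (i).} First I would show the forward map $\Phi_\zeta:(\C^*)^{2n-8}\to\F_n$ is well defined. Given arbitrary nonzero values of $(x_i,y_i)$ at the seed $\zeta$, apply the frieze rule to propagate; by Proposition \ref{AlgProp} together with reconstruction from two consecutive rows this fills in a unique doubly-infinite array, and one must verify that it is \emph{closed}, i.e. that the bounding rows of $1$'s and $0$'s and the periodicity of Proposition \ref{Period} appear automatically. Granting this (it is the content of the identification of $\cZ$ with initial data for closed friezes in Section \ref{FunClo}), $\Phi_\zeta$ lands in $\F_n$, and by the Laurent phenomenon of Fomin--Zelevinsky every entry is a Laurent polynomial in $(x_i,y_i)$, so $\Phi_\zeta$ is a morphism. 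Injectivity is immediate, since the $(x_i,y_i)$ are themselves entries of $\Phi_\zeta(x,y)$ and hence recoverable from the image. For the image, let $\U_\zeta\subset\F_n$ be the locus where all $2n-8$ entries indexed by the vertices of $\zeta$ are nonzero; this is the complement of finitely many hypersurfaces, hence Zariski open, and $\Phi_\zeta$ surjects onto it because any such frieze is reconstructed from its nonvanishing seed entries. The inverse sends a frieze to those seed entries, which are restrictions of coordinate functions and so regular on $\U_\zeta$; as both spaces have dimension $2n-8$, this exhibits $\Phi_\zeta$ as a biregular isomorphism onto $\U_\zeta$.

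\emph{Part (ii).} Restricting to positive reals, I would argue by induction along any mutation sequence from $\zeta$: each exchange relation is subtraction-free, $x'x=M_1+M_2$ with $M_1,M_2$ monomials having positive coefficients, so evaluated at a point of $\R_{>0}^{2n-8}$ every cluster variable, in particular every frieze entry, is positive. Thus $\Phi_\zeta\bigl(\R_{>0}^{2n-8}\bigr)\subseteq\F_n^0$. Conversely any frieze in $\F_n^0$ has all entries positive, so its $\zeta$-seed entries are positive and nonzero; it therefore lies in $\U_\zeta$ and is the image of a point of $\R_{>0}^{2n-8}$, and with the injectivity from (i) this yields the asserted bijection. (Proposition \ref{PosConvPro} then reinterprets $\F_n^0$ as the space of convex polygons, though it is not needed for the bijection itself.)

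The genuinely substantive step, and the one I expect to be the main obstacle, is showing that $\Phi_\zeta$ takes values in $\F_n$ at all: that freely chosen seed data propagate to a \emph{closed}, correctly periodic frieze rather than merely an unbounded one. This is exactly where the combinatorics of $\Qc=A_2*A_{n-4}$ and the precise description of $\cZ$ (mutations avoiding the two $3$-cycles) must be matched to the boundary behaviour forced by Propositions \ref{AlgProp} and \ref{Period}; once that matching is in hand, the remaining assertions are formal consequences of the Laurent phenomenon and of the positivity of exchange relations.
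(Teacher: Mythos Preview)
Your proposal is correct and follows essentially the same route as the paper: both invoke the Laurent phenomenon to show that the zig-zag coordinates give a regular map onto the Zariski-open locus of $\F_n$ where those particular entries are nonzero, and both use the subtraction-free form of the propagation rule for Part~(ii). The paper handles the closedness step you flag by working directly with the formal frieze bounded above and below by rows of $1$'s (Section~\ref{ClustSect}) together with Proposition~\ref{MutProp}, so your deferral to that material is exactly right; one small inaccuracy is your appeal to Proposition~\ref{AlgProp}, which concerns reconstruction from two horizontal \emph{rows} and yields an unbounded frieze, whereas the seeds in $\cZ$ are double zig-zags (vertical data between the two rows of $1$'s) and the relevant propagation is horizontal.
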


\noindent
Theorem \ref{MainSecond} provides good coordinate systems for the study of the space $\cC_n$,
different from the known coordinate systems described in \cite{Sch,OST}.

We think that the ``restricted'' set of cluster coordinates $\cZ$ is an interesting object
that perhaps has a more general meaning, this question remains open.
 In Section \ref{NoLabel}, we define the so-called smooth cluster-type manifold corresponding to the subset $\cZ$.
 The space of all 2-friezes $\F_n$ is precisely this manifold completed by some singular points.

The following proposition is a typical statement that can be
obtained using the cluster structure. 
A double zig-zag is a choice of two adjacent entries in each row of a pattern 
so that the pair of entries in each next row is directly underneath 
the pair above it, or is offset of either one position right or left, 
see Section \ref{FunClo} below for details. 

\begin{prop}
\label{Boundme}
Every frame bounded from the left by a double zig-zag of $1$'s:
$$
 \begin{matrix}
1& 1&1&1&1&1&\cdots
 \\[4pt]
&1&1&\cdots&&&
 \\[4pt]
&&1&1&\cdots&&
 \\[4pt]
 &1&1&\cdots&&&
 \\
&\vdots &\vdots & & & &&\\
& 1&1&1&1&1&\cdots
\end{matrix}
$$
can be completed (in a unique way) to a closed $2$-frieze pattern with positive integer
entries.
\end{prop}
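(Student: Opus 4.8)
The plan is to read the frame as a single point of one of the cluster coordinate systems of Theorem~\ref{MainSecond}, and then to extract all four assertions—existence, uniqueness, positivity, and integrality—directly from the cluster structure rather than from the frieze rule.

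First I would invoke the dictionary between double zig-zags and clusters developed in Section~\ref{FunClo}. A double zig-zag selects two adjacent entries in each of the $n-4$ rows lying strictly between the two bordering rows of $1$'s, for a total of $2n-8$ entries, and these are precisely the cluster variables $(x_1,\ldots,x_{n-4},y_1,\ldots,y_{n-4})_\zeta$ attached to a cluster $\zeta\in\cZ$. Under this identification, imposing a double zig-zag of $1$'s amounts to prescribing the coordinate value $(1,\ldots,1)\in(\C^*)^{2n-8}$. By Theorem~\ref{MainSecond}(i) the associated coordinate map is a biregular isomorphism of $(\C^*)^{2n-8}$ onto a Zariski-open subset of $\F_n$, so the point $(1,\ldots,1)$ has exactly one preimage, namely the unique closed $2$-frieze extending the given frame. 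This settles existence and uniqueness at once.

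Positivity is then immediate: since $(1,\ldots,1)\in\R_{>0}^{2n-8}$, Theorem~\ref{MainSecond}(ii) places the resulting frieze in $\F_n^0$, so every entry is real and positive. For integrality I would appeal to the Laurent phenomenon \cite{FZ1}: every entry not on the two bordering rows is itself a cluster variable, and each cluster variable is a Laurent polynomial with \emph{integer} coefficients in the chosen cluster $(x_i,y_i)_\zeta$. Evaluating such a Laurent polynomial at $x_i=y_i=1$ returns an integer, and together with the positivity just obtained this forces every entry to be a positive integer. One could instead derive positivity from the positivity of cluster-algebra Laurent expansions, but using part~(ii) is cleaner.

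The main obstacle is the structural input postponed to Section~\ref{FunClo}, namely that (a) a double zig-zag genuinely indexes a cluster of $\cZ$, so that the $2n-8$ zig-zag entries form a legitimate coordinate system for the isomorphism of Theorem~\ref{MainSecond}, and (b) every interior entry of the frieze occurs as a cluster variable reachable from this cluster through the admissible mutations defining $\cZ$, which is exactly what licenses applying the Laurent phenomenon to all entries simultaneously. Granting these two facts, the proposition follows formally as above; the real work lies in the combinatorics of the zig-zag/mutation correspondence rather than in the evaluation at the all-ones point.
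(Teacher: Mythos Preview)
Your argument is correct and matches the paper's own proof almost exactly: the paper simply observes that the zig-zag coordinates form a cluster and then invokes the Laurent phenomenon, with existence, uniqueness, and positivity already packaged in Theorem~\ref{MainSecond}. Your write-up spells out the steps more carefully, which is fine; one small over-specification is your condition~(b): for the Laurent phenomenon it suffices that every interior entry is a cluster variable of $\A(\Qc)$ (Proposition~\ref{ComAlgProp}(ii)), not that it is reachable through $\cZ$-mutations specifically.
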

This statement will be proved in Section \ref{FunClo}.
This is a direct generalization of a Coxeter-Conway result \cite{CoCo}.

\subsection{Arithmetic 2-friezes}

Let us consider closed 2-frieze patterns of period $2n$ consisting of positive integers
(like in Example \ref{FirstEx}); we call such 2-friezes {\it arithmetic}.
The classification of such patterns is a fascinating problem formulated in \cite{Pro}.
This problem remains open.

In Section \ref{Trick}, we present an inductive method of constructing
a large number of arithmetic 2-frieze patterns.
This is a step towards  the classification.

Consider two closed arithmetic 2-frieze patterns, $F(a_i,b_i)$ and $F(a'_i,b'_i)$,
one of them of period~$2n$ and the other one of period $2k$, with coefficients
$$
b_1,\;a_1,\;b_2,\;a_2, \ldots, b_n,\;a_n
\qquad
b'_1,\;a'_1,\;b'_2,\;a'_2, \ldots, b'_k,\;a'_k,
$$
respectively.
We call the \textit{connected summation} the following way to glue them together
and obtain a  2-frieze pattern of period $2(n+k-3)$.

\begin{enumerate}
\item
Cut the first one  at an arbitrary place, say between $b_2$ and $a_2$.
\item
Insert $2(k-3)$ integers:
$a'_2,\;b'_3,\ldots,a'_{k-2},\;b'_{k-1}$.
\item
Replace the three left and the three right neighbouring entries by:
\begin{equation}
\label{MultStabRow}
\begin{array}{rclll}
\textstyle
\left(
b_1,\;a_1,\;b_2
\right)
&\to&
(b_1+b'_1,& a_1+a'_1+b_2\,b'_1,& 
b_2+b'_2)
\\[6pt]
\textstyle
(a_2,\;b_3,\;a_3)
&\to&
(a_2+a'_{k-1},& b_3+b'_k+a_2\,a'_k,& a_3+a'_k),
\end{array}
\end{equation}
leaving the other $2(n-3)$ entries $b_4,a_4, \ldots, b_n,a_n$ unchanged.
\end{enumerate}

In Section \ref{Trick},
we will prove the following statement.

\begin{thm}
\label{MultiStabProp} Connected summation yields a closed $2$-frieze pattern of period $2(n+k-3)$.
If $F(a_i,b_i)$ and $F(a'_i,b'_i)$ are closed arithmetic $2$-frieze patterns, then
their connected sum is also a closed arithmetic $2$-frieze pattern.
\end{thm}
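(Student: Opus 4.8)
The plan is to treat the two assertions of the theorem separately. The first, that connected summation yields a closed $2$-frieze of period $2(n+k-3)$, holds for \emph{any} two closed summands, and I would prove it via the monodromy criterion of Theorem \ref{MainOne}: it suffices to show that the difference equation \eqref{recur} attached to the glued coefficient sequence has trivial monodromy. To that end, encode \eqref{recur} by the transfer matrices
$$
T_i=\begin{pmatrix} a_i & -b_i & 1\\ 1 & 0 & 0\\ 0 & 1 & 0\end{pmatrix},
$$
so that the monodromy over one period is the ordered product $M=T_n\cdots T_1$ and closedness means $M=\Id$. The feature that makes the gluing formulas transparent is that each $T_i$ factors as $T_i=\alpha(a_i)\,\beta(b_i)$ into two elementary matrices, $\alpha(a)=\left(\begin{smallmatrix} a&1&0\\ 1&0&0\\ 0&0&1\end{smallmatrix}\right)$ and $\beta(b)=\left(\begin{smallmatrix} 1&0&0\\ 0&-b&1\\ 0&1&0\end{smallmatrix}\right)$, each depending on a single frieze entry. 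The monodromy is thereby an alternating word in the $\alpha$'s and $\beta$'s, and cutting the frieze between two adjacent entries corresponds exactly to splitting this word between two elementary factors.

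First I would carry out the closedness computation. Writing $M=\Id$ for $F(a_i,b_i)$ and $M'=\Id$ for $F(a'_i,b'_i)$, I split each monodromy word at the prescribed cut and splice the middle segment of the second word into the first. The replacements \eqref{MultStabRow} are engineered precisely so that, at each of the two junctions, the block of abutting elementary factors coming from the two friezes recombines into the elementary factors $\alpha,\beta$ of the glued coefficient sequence: the nonlinear cross terms $b_2\,b'_1$ and $a_2\,a'_k$ are exactly the entries produced when a $\beta$-factor of one frieze multiplies a $\beta$-factor of the other (and likewise $\alpha$ with $\alpha$), while the additive terms absorb the remaining diagonal contributions. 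Once both junctions are recombined, the word reduces to a cyclic rearrangement of $M\,M'=\Id$, so $M_{\mathrm{new}}=\Id$ and the glued pattern is closed; the period count $2n+2(k-3)=2(n+k-3)$ is immediate from the insertion of $2(k-3)$ entries.

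For the second assertion, arithmeticity, integrality is essentially free. By Proposition \ref{AlgProp} every entry of a $2$-frieze is an integer polynomial in the boundary entries $a_i,b_i$, and the replacements \eqref{MultStabRow} together with the inserted entries $a'_i,b'_i$ are manifestly positive integers once both summands are arithmetic; evaluating those polynomials at integers yields integer entries throughout. It remains to prove that all entries are positive, and here I would appeal to Proposition \ref{PosConvPro}: the summands being arithmetic, hence positive, the associated polygons in $\R^3$ are convex, and I would check that the polygon attached to the connected sum is again convex. Geometrically it is obtained by inserting the arc of the second polygon into the first across a shared triangle, so the defining inequalities $\det(V_{i-1},V_i,V_j)>0$ are inherited from those of the two convex pieces; positivity of every frieze entry then follows from Proposition \ref{PosConvPro}.

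I expect the junction computation in the closedness step to be the main obstacle: one must verify that the two specific triples of \eqref{MultStabRow}, cross terms included, are exactly those for which the spliced monodromy word recombines, and arranging this as a clean local identity of elementary factors (rather than a brute-force $3\times 3$ multiplication at each junction) is the delicate part. The secondary difficulty lies in the positivity step, namely identifying the glued polygon explicitly enough to see that the shared-triangle gluing preserves all the sign conditions, so that Proposition \ref{PosConvPro} genuinely applies.
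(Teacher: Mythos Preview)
Your overall strategy matches the paper's, but there is an inaccuracy in the closedness step and a genuine gap in the positivity step.

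For closedness, the paper also computes the monodromy as a product of $3\times 3$ transfer matrices (written there as $N_j$, essentially your $T_j$ in a transposed convention). However, the mechanism is not that the glued word becomes a cyclic rearrangement of $M\cdot M'$. What actually happens is that the product of the $k$ new transfer matrices over the modified-and-inserted block factors as $L \cdot M' \cdot U$, with $L$ lower-triangular unipotent depending only on $(b_1,a_1,b_2)$ and $U$ upper-triangular unipotent depending only on $(a_2,b_3,a_3)$; since $M'=\Id$ this collapses to $L\cdot U$, and the decisive identity (Lemma~\ref{gluing}) is that $L\cdot U$ equals the product $N_1 N_2 N_3$ of the three transfer matrices of the first frieze that were displaced by the insertion. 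Hence $M_{\mathrm{new}}=N_1N_2N_3\cdot N_4\cdots N_n=M=\Id$. Your finer $\alpha,\beta$ factorization does not obviously produce such a clean local identity; the paper works with products of \emph{pairs} of transfer matrices at each junction and extracts a triangular factor from each, which is what makes the cross terms $b_2b'_1$ and $a_2a'_k$ appear exactly where they do in \eqref{MultStabRow}.

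For positivity there is a real gap. If $V_{i-1},V_i$ lie outside the six-vertex overlap (so belong only to the first $n$-gon) and $V_j$ is one of the inserted vertices $W_m$ (so belongs only to the second $k$-gon), then $\det(V_{i-1},V_i,V_j)$ is not a determinant of three vertices of either convex summand, and its positivity is not ``inherited''. Indeed, expanding $W_1$ in the basis $V_{n-2},V_{n-1},V_n$ already produces the coefficient $-b'_1<0$, so the naive linear-combination argument fails. The paper circumvents this by \emph{not} proving convexity of the glued polygon directly: it exhibits a single positive double zig-zag, namely the two consecutive diagonals $\Db_{n+2}$ and $\Db_{n+\frac{5}{2}}$. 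By \eqref{MalDetEq} their entries are the determinants $|V_{n-1},V_n,V_i|$ and $|V_i,V_{i+1},V_n|$; since $V_{n-1},V_n,V_1$ lie in the overlap, each such determinant involves vertices of only one of the two convex summands and is therefore positive. Theorem~\ref{MainSecond}(ii) then forces the entire frieze to be positive. Convexity of the glued polygon follows \emph{a posteriori} from Proposition~\ref{PosConvPro}, rather than serving as an input.
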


\noindent
In Sections \ref{StabOneSec} and \ref{StabTwoSec},
we explain the details in the first non-trivial cases: $k=4$ and $5$,
that we call ``stabilization''.

The classical Coxeter-Conway integral frieze patterns were classified in \cite{CoCo}
with the help of a similar stabilization procedure.
In particular, a beautiful relation with triangulations of an $n$-gon
(and thus with the Catalan numbers) was found making the result more
attractive. 
Unfortunately, the above procedure of connected summation 
does not lead to  classification of arithmetic 2-frieze patterns.
This is due to the fact that, unlike the Coxeter-Conway integral frieze patterns,
not every integral 2-frieze pattern is a connected sum of smaller ones,
see examples below.

\section{Algebraic 2-friezes}\label{XYZ}

The goal of this section, is to describe various ways to calculate
the frieze \eqref{FriezeFAB}.
This will imply Proposition \ref{AlgProp}.

\subsection{The pattern rule}

Recall that we denote by $(v_{i,j})_{(i,j)\in \Z^2}$
and $(v_{i+\half,j+\half})_{(i,j)\in \Z^2}$ the entries of the frieze organized as follows
$$
 \xymatrix{
 &\cdots\ar@{--}[rd]
&1\ar@{-}[rd]\ar@{-}[ld]
&1\ar@{--}[ld]\ar@{--}[rd]
&1\ar@<2pt>@{-}[ld]\ar@{-}[rd]
&\cdots \ar@{--}[ld]\\
&\cdots \ar@<2pt>@{-}[rd]
&v_{i-\half,i-\half}\ar@{--}[rd]\ar@{--}[ld]
& v_{i,i}\ar@{-}[ld]\ar@{-}[rd]
&v_{i+\half,i+\half}\ar@{--}[rd]\ar@{--}[ld]
&\cdots \ar@{-}[ld]\\ 
&\cdots  \ar@{--}[rd]
& v_{i,i-1}\ar@<2pt>@{-}[rd]\ar@<2pt>@{-}[ld]
&v_{i+\half,i-\half}\ar@{--}[ld]\ar@{--}[rd]
& v_{i+1,i}\ar@{-}[ld]\ar@{-}[rd] 
&\ar@{--}[ld] \cdots\\
&\cdots 
&\cdots &\cdots &\cdots &\cdots  
}
$$
In the algebraic frieze, we assume:
$$
v_{i,i}=A_i
\qquad
v_{i-\half,i-\half}=B_i,
$$
see \eqref{FriezeFAB}.

The first way to calculate the entries in the frieze \eqref{FriezeFAB}
is a direct inductive application of the pattern rule.

\subsection{The determinant formula}

The most general formula for the elements of the pattern is the following
determinant formula generalizing that of Coxeter-Conway \cite{CoCo}.

\begin{prop}
\label{TabProp}
One has
\begin{equation}
\label{DetTab}
v_{i,j}=
\left|
\begin{array}{llllll}
A_{j}&B_{j+1}&1&&&\\
1&A_{j+1}&B_{j+2}&1&&\\
&\;\ddots&\; \ddots& \;\ddots&\; \ddots&\\
&&1&A_{i-2}&B_{i-1}&1\\
&&&1&A_{i-1}&B_{i}\\
&&&&1&A_i
\end{array}
\right|
\end{equation}
for $i\geq{}j\in\Z$.
The element $v_{i+\half,j+\half}$ is obtained from $v_{i,j}$
by replacing $(A_k,B_{k+1})\to(B_{k+1},A_{k+1})$.
\end{prop}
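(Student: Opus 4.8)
The plan is to prove the determinant formula \eqref{DetTab} by induction on the difference $i-j$, using the three-term frieze relation as the inductive mechanism. First I would establish the base cases: when $i=j$, the matrix is the single entry $A_i$, correctly giving $v_{i,i}=A_i$; and I would check the next case $i=j+1$, where the determinant of the $2\times 2$ matrix $\begin{pmatrix} A_j & B_{j+1} \\ 1 & A_{j+1}\end{pmatrix}$ equals $A_jA_{j+1}-B_{j+1}$, which I would verify against the direct application of the pattern rule to the first two rows. These small cases anchor the induction and also fix all the index conventions, which is where most of the bookkeeping risk lies.

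The inductive step is the heart of the argument. Let $M_{i,j}$ denote the tridiagonal-like matrix on the right-hand side of \eqref{DetTab}, with its characteristic $\ldots,A_k,B_{k+1},1,\ldots$ pattern running down the diagonal band. I would expand $\det M_{i,j}$ along the last row (the row containing $1, A_i$), producing a two-term recurrence of the form $v_{i,j}=A_i\,v_{i-1,j}-B_i\,v_{i-2,j}+(\text{correction})$. The key computational point is that the band structure — with the sub-diagonal $1$'s, the super-diagonal $B$'s, and the super-super-diagonal $1$'s — makes cofactor expansion collapse neatly, so that the minors appearing are exactly $v_{i-1,j}$ and $v_{i-2,j}$ (possibly with a further $v_{i-3,j}$ term). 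I would then show that this determinantal recurrence is precisely the linear recurrence \eqref{recur}, namely $V_i=a_iV_{i-1}-b_iV_{i-2}+V_{i-3}$, satisfied by the columns of the frieze, once $a_i,b_i$ are matched to the appropriate $A$'s and $B$'s; since the pattern rule is equivalent to this same recurrence, the two constructions agree.

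An alternative and perhaps cleaner route, which I would keep in reserve, is to verify directly that the quantities $w_{i,j}:=\det M_{i,j}$ satisfy the defining frieze relation $w_{i,j}=w_{i-\half,j-\half}\,w_{i+\half,j+\half}-w_{i-\half,j+\half}\,w_{i+\half,j-\half}$ (the $E=AD-BC$ rule), together with the correct boundary rows of $1$'s. Because a frieze is determined by its two top rows (as asserted just before the statement), matching the boundary data plus the relation would force $w_{i,j}=v_{i,j}$ everywhere. This reduces everything to a single determinantal identity among four bordered matrices of consecutive sizes, which I would derive from a Desnanot--Jacobi (Dodgson condensation) type identity applied to $M_{i,j}$; indeed the paper has already flagged that $2$-friezes are an instance of Dodgson condensation, so this identity is exactly the expected tool.

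The main obstacle I anticipate is entirely notational rather than conceptual: keeping the half-integer indices, the alternating $A/B$ diagonal pattern, and the substitution $(A_k,B_{k+1})\to(B_{k+1},A_{k+1})$ for the $v_{i+\half,j+\half}$ entries consistent throughout the cofactor expansion. In particular I expect the trickiest bookkeeping to be verifying that the second half of the statement — that the shifted entries are obtained by the stated substitution — is compatible with the same recurrence, since the roles of $A$ and $B$ swap; I would handle this by observing that the frieze rule itself is symmetric under interchanging the two interleaved sublattices, so the substitution is forced by the same induction applied to the shifted lattice.
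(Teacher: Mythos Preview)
Your alternative route --- verifying via Desnanot--Jacobi that the determinants $w_{i,j}=\det M_{i,j}$ satisfy the 2-frieze rule and then matching boundary data --- is exactly what the paper does, in fewer words. The paper writes the frieze rule for $v_{i-\half,j+\half}$ as
\[
v_{i-1,j}\,v_{i,j+1}=v_{i-1,j+1}\,v_{i,j}+v_{i-\half,j+\half},
\]
assumes by induction on the row number $i-j$ that all terms except $v_{i,j}$ are already given by the appropriate determinants, and then invokes Dodgson condensation on $M_{i,j}$ to solve for $v_{i,j}$. So your ``reserve'' argument is the one to lead with.

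Your primary approach, however, has a real gap. Cofactor expansion of $\det M_{i,j}$ along its last row does indeed give
\[
\det M_{i,j}=A_i\det M_{i-1,j}-B_i\det M_{i-2,j}+\det M_{i-3,j}.
\]
But your matching step --- the claim that ``the pattern rule is equivalent to this same recurrence'' for the $v_{i,j}$ --- is precisely the content of Proposition~\ref{proprec}, which in the paper is proved \emph{as a consequence of} the determinant formula you are trying to establish. There is no direct, obvious passage from the quadratic rule $E=AD-BC$ to the linear three-term recurrence along a fixed diagonal $\Db_j$; in the paper's logical order that implication runs the other way. So unless you supply an independent proof that the frieze entries satisfy \eqref{rec1} before you have \eqref{DetTab} in hand (which is possible but is itself a nontrivial induction you have not sketched), the primary route is circular. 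Promote the Dodgson argument to the main line and drop the recurrence-based one, or else explain how you would obtain \eqref{rec1} without \eqref{DetTab}.
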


\begin{proof}
The 2-frieze pattern rule for $v_{i-1/2,j+1/2}$ reads
$$
\textstyle
v_{i-1,j}\,v_{i,j+1}=v_{i-1,j+1}\,v_{i,j}+v_{i-\half,j+\half}.
$$
Using induction on $i-j$, understood as the row number,
we assume that  the formula for $v_{k,\ell}$ holds for $k-\ell<i-j$,
so that all the terms of the above equality except $v_{i,j}$ are known.
The result then follows from the Dodgson formula.
\end{proof}

The algebraic frieze looks as follows:
\begin{equation}
\label{FirstVal}
\begin{array}{lccccc}
1&1&1& 1&1& \cdots
 \\[10pt]
\cdots\quad&A_0&B_1&A_1&B_2&\cdots
 \\[12pt]
&\cdots &A_0A_1-B_1&B_1B_2-A_1&A_1A_2-B_2&\cdots
 \\[12pt]
& &\;\;\; \ldots& A_0A_1A_2-A_2B_1&B_1B_2B_3-A_1B_3 &\ldots
 \\
&&&\;-A_0B_2+1&\;-A_2B_1+1&
\\[12pt]
&&&&A_0A_1A_2A_3-A_2A_3B_1&\\
&&&\;\;\;\;\; \cdots&\;-A_0A_3B_2-A_0A_1B_3&\ldots\\
&&&&\;+B_1B_3+A_0+A_3&
\end{array}
\end{equation}

\subsection{Recurrence relations on the diagonals}

Let us introduce the following notation.
\begin{enumerate}
\item
The diagonals pointing ``North-East'' (that contain all the elements $v_{i,.}$ with $i$ fixed);
are denoted by $\D_i$.
\item
The diagonals pointing ``South-East'' (that contain all the elements  $v_{.,j}$ with $j$ fixed) 
are denoted by $\Db_j$.
\item
The (horizontal) rows $\{v_{i,j}\;\vert\;i-j=\mathrm{const}\}$
are denoted by $R_{i-j}$.
\end{enumerate}
$$
\xymatrix{
&\Db_{j-1}\ar@{->}[rrrrdddd]
 &\Db_{j-\half}\ar@{-->}[rrrrdddd]
&\Db_j\ar@{->}[rrrrdddd]
&
&\D_i\ar@{<-}[lllldddd]
&\D_{i+\half}\ar@{<--}[lllldddd]
 &\D_{i+1}\ar@{<-}[lllldddd]
&
 &
 &
 \\
R_{i-j}\ar@<10pt>@{..>}[rrrrrrrr]&&&&&&&&&&&&
 \\
R_{i-j+1}\ar@<8pt>@{..>}[rrrrrrrr]&&&&&&&&&&&&&\\
&&&&&&&&&&&&&&&\\
&&&&&&&&&&&&&&&&&&&&&&&&
}
\label{LinesFrieze}
$$

\begin{prop}\label{proprec}
One has the following recurrence relations.
For all $i\in \Z$,
\begin{equation}
\label{rec1}
\begin{array}{rcl}
\D_{i}&=&A_i\D_{i-1}-B_i\D_{i-2}+\D_{i-3},\\[6pt]
\Db_{i}&=&A_i\Db_{i+1}-B_{i+1}\Db_{i+2}+\Db_{i+3},
\end{array}
\end{equation}
 and 
\begin{equation}
\label{rec2}
\begin{array}{rcl}
\D_{i+\half}&=&B_{i+1}\D_{i-\half}-A_i\D_{i-\frac{3}{2}}+\D_{i-\frac{5}{2}},\\[6pt]
\Db_{i+\half}&=&B_{i+1}\Db_{i+\frac{3}{2}}-A_{i+1}\Db_{i+\frac{5}{2}}+\Db_{i+\frac{7}{2}}.
\end{array}
\end{equation}
\end{prop}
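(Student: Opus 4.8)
The plan is to derive all four recurrences directly from the determinant formula \eqref{DetTab} of Proposition \ref{TabProp} by Laplace (cofactor) expansion. The point to exploit is that the matrix defining $v_{i,j}$ is banded: reading along any row it carries $A$ on the main diagonal, $B$ on the first superdiagonal, a $1$ on the second superdiagonal, a $1$ on the first subdiagonal, and $0$ elsewhere. I will treat the ``North-East'' diagonals $\D_i$ first and transport everything else to this case.

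To obtain the first relation in \eqref{rec1}, I fix $j$ and expand the determinant \eqref{DetTab} for $v_{i,j}$ along its last column. That column has only three nonzero entries: $A_i$ in the bottom row, $B_i$ just above it, and a $1$ two rows above it (the second-superdiagonal $1$). The cofactor of $A_i$ is exactly the determinant for $v_{i-1,j}$, i.e. the $j$-th entry of $\D_{i-1}$. For the cofactors of $B_i$ and of the $1$, deleting the relevant row together with the last column leaves a matrix whose bottom row (or two bottom rows) has the shape $(0,\ldots,0,1)$, coming from the subdiagonal $1$'s; expanding repeatedly along these rows collapses the minor onto the determinants for $v_{i-2,j}$ and $v_{i-3,j}$. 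Collecting the three terms with their signs gives $v_{i,j}=A_i\,v_{i-1,j}-B_i\,v_{i-2,j}+v_{i-3,j}$, which is the first line of \eqref{rec1}. The second line, for the ``South-East'' diagonals $\Db_i$, is identical but expands along the first column, starting from the top-left corner $A_i$ of \eqref{DetTab} with $j=i$; the coefficients then acquire the shift $B_{i+1}$ rather than $B_i$, yielding $\Db_i=A_i\Db_{i+1}-B_{i+1}\Db_{i+2}+\Db_{i+3}$.

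For the half-integer diagonals I use the statement, already recorded in Proposition \ref{TabProp}, that $v_{i+\frac12,j+\frac12}$ is obtained from $v_{i,j}$ by the substitution $(A_k,B_{k+1})\mapsto(B_{k+1},A_{k+1})$. Applying the very same two expansions to the substituted determinant---along the last column for $\D_{i+\frac12}$ and along the first column for $\Db_{i+\frac12}$---reproduces the same three-term pattern with the roles of $A$ and $B$ interchanged and reindexed. This is precisely \eqref{rec2}: the bottom-right (resp. top-left) entry $A_i$ becomes $B_{i+1}$, so the leading coefficient is $B_{i+1}$, while the entry $B_i$ (resp. $B_{i+1}$) becomes $A_i$ (resp. $A_{i+1}$), giving the middle coefficient. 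Thus all four relations follow from a single computation performed on \eqref{DetTab} and on its substituted companion.

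The care required is twofold, and I expect the second point to be the fiddly part of a full write-up. First, the sign bookkeeping in the iterated expansion must be checked: the cofactor of the second-superdiagonal $1$ has to contribute with a $+$ and that of $B_i$ with a $-$. This comes out correctly because each collapse along a $(0,\ldots,0,1)$ row contributes a sign $+1$, while the three initial cofactor signs are $(-1)^{m+m}$, $(-1)^{(m-1)+m}$ and $(-1)^{(m-2)+m}$, where $m=i-j+1$ is the matrix size. Second, the formula \eqref{DetTab} is only asserted for $i\geq j$, so the boundary rows with $i-j<3$ must be verified separately. This is immediate from the conventions $v_{i,i+1}=1$ and $v_{i,j}=0$ for $j\geq i+2$, which are built into the bounding row of $1$'s and the two rows of $0$'s: at $i-j=0$ the relation reads $A_i=A_i\cdot 1-B_i\cdot 0+0$, and at $i-j=1$ it reads $A_{i-1}A_i-B_i=A_i\cdot A_{i-1}-B_i\cdot 1+0$, both of which hold, and similarly for the half-integer diagonals.
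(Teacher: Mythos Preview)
Your approach is correct and coincides with the paper's own proof, which is the one-line ``straightforward using the determinant formula \eqref{DetTab}''; you have simply spelled out the Laplace expansion that the paper leaves to the reader. One small slip: for the $\Db$-recurrence you should expand along the first \emph{row} of \eqref{DetTab}, not the first column---the first column has only two nonzero entries ($A_j$ and a subdiagonal $1$), whereas the first row carries $A_j,\,B_{j+1},\,1$, matching the three terms of the recurrence; your stated coefficients $A_i,\,B_{i+1}$ already reflect the first-row expansion, so this is only a wording issue.
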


\begin{proof}
Straighforward using the determinant formula \eqref{DetTab}.
\end{proof}

Note also that the difference equations \eqref{rec1} and \eqref{rec2} are \textit{dual} to each other,
see \cite{OST}.

\subsection{Relation to $\SL_3$-tilings}

An $\SL_k$-tiling is an infinite matrix such that any principal $k\times k$-minor
(i.e., a minor with contiguous row and column indices)
is equal to $1$. These  $\SL_k$-tilings were introduced and studied in \cite{Ber}. 
Following \cite{Ber}, we note that
an algebraic 2-frieze pattern contains two $\SL_3$-tilings. 
\begin{prop}
\label{LinComb}
The subpatterns
$(v_{i,j})_{i,j\in\Z}$ and
$(v_{i,j})_{i,j\in\Z+\half}$ of $F(A_i,B_i)$
are both $\SL_3$-tilings.
\end{prop}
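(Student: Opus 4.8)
The plan is to read the integer subpattern as the infinite matrix $M=(v_{i,j})_{i,j\in\Z}$ with row index $i$ and column index $j$; then its rows are the North-East diagonals $\D_i$ and its columns are the South-East diagonals $\Db_j$, and the assertion that $M$ is an $\SL_3$-tiling is precisely the statement that every $3\times3$ minor on three consecutive rows and three consecutive columns equals $1$. The only input I would use is Proposition \ref{proprec}: along the rows one has $\D_i=A_i\D_{i-1}-B_i\D_{i-2}+\D_{i-3}$, and along the columns the dual relation $\Db_j=A_j\Db_{j+1}-B_{j+1}\Db_{j+2}+\Db_{j+3}$. What makes the argument work is that in both relations the outermost term ($\D_{i-3}$, respectively $\Db_{j+3}$) enters with coefficient exactly $+1$; equivalently, the associated companion matrix has determinant $1$.

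First I would establish a discrete Wronskian (Casoratian) invariance: such a $3\times3$ minor is unchanged when the window of three rows is advanced by one, and likewise for the columns. Fixing three columns and the rows $i,i+1,i+2$, the recurrence gives $\D_{i+3}=A_{i+3}\D_{i+2}-B_{i+3}\D_{i+1}+\D_i$; hence in the shifted block on rows $i+1,i+2,i+3$ the last row may be reduced, by adding suitable multiples of the rows $\D_{i+1},\D_{i+2}$ already present, to the single row $\D_i$, an operation that preserves the determinant. The block then consists of the original three rows listed in the cyclic order $(\D_{i+1},\D_{i+2},\D_i)$; a $3$-cycle is an even permutation, so the determinant is unchanged, and the coefficient $+1$ in front of $\D_i$ is exactly what prevents any rescaling. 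The identical computation with the column relation gives invariance under a one-column shift. Chaining unit shifts, all contiguous $3\times3$ minors of $M$ coincide with one common constant.

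It remains to compute this constant once, and the efficient choice is a block abutting the bounding rows: on rows $i,i+1,i+2$ and columns $i+1,i+2,i+3$ the entries with $i-j=-1$ sit on the row of $1$'s and those with $i-j\le-2$ on the rows of $0$'s, while $v_{i+1,i+1}=A_{i+1}$, $v_{i+2,i+2}=A_{i+2}$, and $v_{i+2,i+1}=A_{i+1}A_{i+2}-B_{i+2}$ by the determinant formula \eqref{DetTab}. This block is lower triangular with $1$'s on the diagonal, so its determinant is $1$; hence every contiguous $3\times3$ minor equals $1$ and $M$ is an $\SL_3$-tiling. The half-integer subpattern is treated verbatim, now using the recurrences \eqref{rec2} for $\D_{i+\half}$ and $\Db_{j+\half}$ — whose outermost coefficient is again $+1$ — together with the boundary values $v_{i-\half,i+\half}=1$ and $v_{i-\half,i+\frac{3}{2}}=0$. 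The one delicate point, which I would take care to verify, is exactly the pairing of this $+1$ coefficient with the even sign of the $3$-cycle: it is this matching that pins the common value of the minors to $1$ rather than to some other nonzero constant.
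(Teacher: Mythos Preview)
Your argument is correct, and it takes a genuinely different route from the paper's. The paper applies Dodgson's condensation identity directly: for the $3\times3$ block centered at $(i,j)$, the Desnanot--Jacobi relation expresses the product of the $3\times3$ minor with the central entry $v_{i,j}$ as a $2\times2$ combination of the four corner $2\times2$ minors; each of those $2\times2$ minors is, by the very definition of the 2-frieze rule, an entry of the \emph{half-integer} subpattern, and the resulting expression $v_{i-\half,j-\half}v_{i+\half,j+\half}-v_{i-\half,j+\half}v_{i+\half,j-\half}$ collapses again by the 2-frieze rule to $v_{i,j}$. Cancelling the nonzero polynomial $v_{i,j}$ (appealing to the determinant formula \eqref{DetTab}) gives the result in one stroke.

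Your approach trades this single identity for two more elementary ingredients: Casoratian invariance under unit shifts, which you correctly extract from Proposition~\ref{proprec} (and which is exactly the statement that the companion matrices \eqref{threebythree} have determinant $1$), followed by one explicit evaluation at the boundary. What you gain is that no division is needed and no appeal to Dodgson is made; the role of the coefficient $+1$ on the outermost term is made transparent. What the paper's proof gains is that it never looks at the boundary and, more interestingly, it exhibits the $\SL_3$-tiling property as a direct consequence of the interplay between the two interlaced subpatterns via the 2-frieze rule --- in effect, it shows that the integer and half-integer tilings are Dodgson-dual, which is the content of the remark following the proposition.
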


\begin{proof}
Using Dodgson's formula, one obtains
\begin{equation*}
\begin{array}{lll}
&
\left|
\begin{array}{lll}
v_{i-1,j-1}&v_{i-1,j}&v_{i-1,j+1}\\
v_{i,j-1}&v_{i,j}&v_{i,j+1}\\
v_{i+1,j-1}&v_{i+1,j}&v_{i+1,j+1}\\
\end{array}
\right| 
v_{i,j}
\\[20pt]
&=\left|
\begin{array}{lll}
v_{i-1,j-1}&v_{i-1,j}\\
v_{i,j-1}&v_{i,j}
\end{array}
\right|
\left|
\begin{array}{lll}
v_{i,j}&v_{i,j+1}\\
v_{i+1,j}&v_{i+1,j+1}
\end{array}
\right|
-
\left|
\begin{array}{lll}
v_{i-1,j}&v_{i-1,j+1}\\
v_{i,j}&v_{i,j+1}
\end{array}
\right|
\left|
\begin{array}{lll}
v_{i,j-1}&v_{i,j}\\
v_{i+1,j-1}&v_{i+1,j}\\
\end{array}
\right|\\[16pt]
&=v_{i-\half,j-\half}\,v_{i+\half,j+\half}-
v_{i-\half,j+\half}\,v_{i+\half,j-\half}\\[10pt]
&= v_{i,j}.
\end{array}
\end{equation*}
If follows from Proposition \ref{TabProp}, that $v_{i+1,j-1}\not=0$.
One obtains
$$
\left|
\begin{array}{lll}
v_{i,j-2}&v_{i,j-1}&v_{i,j}\\[4pt]
v_{i+1,j-2}&v_{i+1,j-1}&v_{i+1,j}\\[4pt]
v_{i+2,j-2}&v_{i+2,j-1}&v_{i+2,j}\\[4pt]
\end{array}
\right|
=1.
$$
Hence the result.
\end{proof}

The two $\SL_3$-tilings are dual to each other in the sense of \cite{Ber}. The converse statement also holds: one can construct a 2-frieze pattern from an $\SL_3$-tiling by superimposing the tiling on its dual, see \cite{Ber}.

\section{Numerical friezes}\label{NumSec}

In this section,
we prove Propositions \ref{Period}, \ref{OSTProp}, \ref{PosConvPro} and Theorem \ref{MainOne}. We also discuss a relation with a moduli space of Fock-Goncharov \cite{FG, FG1}.

\subsection{Entries of a numerical frieze}

Consider a numerical 2-frieze $F(a_i,b_i)$.
Its entries $v_{i,j}$ can be expressed as determinants involving
solutions of the corresponding difference equation \eqref{recur}, understood as vectors in 3-space.

\begin{lem}
\label{DetLem}
One has
\begin{equation}
\label{MalDetEq}
v_{i,j}=
\left|
V_{j-3},\,V_{j-2},\,V_{i}
\right|,
\qquad
v_{i-\half,j-\half}=
\left|
V_{i-1},\,V_{i},\,V_{j-3}
\right|,
\end{equation}
where $V=(V_i)$ is any solution of \eqref{recur} such that
$\left|
V_{i-2},\,V_{i-1},\,V_{i}
\right|=1$.
\end{lem}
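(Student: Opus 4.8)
The plan is to prove the two formulas separately: the first one by running the third-order recurrence along the diagonals, and the second one from the frieze (Dodgson) rule combined with a Grassmann--Pl\"ucker relation. As a preliminary I would record that the normalization $|V_{i-2},V_{i-1},V_i|=1$ is consistent, i.e.\ independent of $i$. Indeed, expanding $|V_{i-1},V_i,V_{i+1}|$ by \eqref{recur} and multilinearity, the $a_{i+1}V_i$ and $-b_{i+1}V_{i-1}$ terms vanish (repeated columns) and only $|V_{i-1},V_i,V_{i-2}|=|V_{i-2},V_{i-1},V_i|$ survives; thus the triple determinant of three consecutive solution vectors is a constant, which we set to $1$. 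This also makes the formulas independent of the chosen basis of solutions, since two bases differ by a fixed element of $\SL_3$, which preserves all the determinants.

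For the first formula I would set $w_{i,j}:=|V_{j-3},V_{j-2},V_i|$. Since the determinant is linear in its last column and $V_i=a_iV_{i-1}-b_iV_{i-2}+V_{i-3}$, the quantity $w_{i,j}$ satisfies, for fixed $j$, the recurrence $w_{i,j}=a_iw_{i-1,j}-b_iw_{i-2,j}+w_{i-3,j}$ in the index $i$. By Proposition \ref{proprec} (equation \eqref{rec1}), evaluated at $A_i=a_i$, $B_i=b_i$, the frieze entries $v_{i,j}$ satisfy the very same recurrence along $\D_i$, at fixed $j$. A solution of such a recurrence (whose leading coefficient is $1$, hence it runs in both directions) is determined by three consecutive values, so it suffices to match $v$ and $w$ on the three top rows bounding the frieze, namely the two rows of $0$'s and the row of $1$'s. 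For $i=j-3$ and $i=j-2$ one gets $w_{i,j}=0$ (a repeated column $V_{j-3}$ or $V_{j-2}$), and for $i=j-1$ one gets $w_{j-1,j}=|V_{j-3},V_{j-2},V_{j-1}|=1$; these agree with $v_{i,j}$ in rows $R_{-3},R_{-2},R_{-1}$. Hence $v_{i,j}=w_{i,j}$ for all $i,j$.

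For the second formula I would use the frieze rule relating a half-integer entry to its four integer neighbours, which (as in the proof of Proposition \ref{TabProp}) reads $v_{i-\half,j-\half}=v_{i-1,j-1}\,v_{i,j}-v_{i-1,j}\,v_{i,j-1}$. Substituting the first formula turns the right-hand side into $|V_{j-4},V_{j-3},V_{i-1}|\,|V_{j-3},V_{j-2},V_i|-|V_{j-3},V_{j-2},V_{i-1}|\,|V_{j-4},V_{j-3},V_i|$. Every determinant here contains the common vector $V_{j-3}$, so I would move it into the first slot and apply the three-term Grassmann--Pl\"ucker relation $|x,a,b|\,|x,c,d|-|x,a,c|\,|x,b,d|+|x,a,d|\,|x,b,c|=0$ with $x=V_{j-3}$. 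Evaluating the leftover scalar by the normalization $|V_{j-4},V_{j-3},V_{j-2}|=1$, the whole expression collapses to $|V_{i-1},V_i,V_{j-3}|$, which is the claimed formula.

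The first formula is essentially forced once one observes that $w$ and $v$ obey the same linear recurrence with matching initial rows, so the genuine work lies in the half-integer identity. The delicate point there is selecting the correct common vector and invoking the normalization at the right moment: a careless Pl\"ucker reduction leaves a spurious factor rather than collapsing cleanly. A fallback I would keep in reserve is to prove the second formula by the same diagonal-recurrence method using the dual recurrence \eqref{rec2} and the analogous boundary conditions on the half-integer rows; this sidesteps the Pl\"ucker identity but demands more careful bookkeeping of the half-integer indexing.
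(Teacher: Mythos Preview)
Your proof is correct. For the integer-indexed entries you argue exactly as the paper does: fix $j$, check the three boundary rows, and propagate via the recurrence \eqref{rec1} along $\Db_j$. Your preliminary remark that the normalization $|V_{i-2},V_{i-1},V_i|=1$ is index-independent is a welcome clarification the paper leaves implicit.

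Where you diverge is the half-integer case. The paper dispatches it with ``the proof in the half-integer case is similar,'' meaning one repeats the diagonal induction using the dual recurrence \eqref{rec2} and the matching boundary rows on the half-integer lattice---precisely the fallback you mention. You instead derive $v_{i-\half,j-\half}$ from its four integer neighbours via the frieze rule and then collapse the resulting product of determinants with a three-term Pl\"ucker identity and the normalization $|V_{j-4},V_{j-3},V_{j-2}|=1$. Both routes are short and valid. The paper's approach is more uniform (one argument, run twice) and avoids invoking an external identity; your approach has the virtue of not requiring a separate verification of half-integer boundary conditions and of making the link to Pl\"ucker relations explicit, which foreshadows the $\SL_3$-tiling/Grassmannian viewpoint appearing later in the paper.
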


\begin{proof}
Consider the diagonal $\Db_j$ of the frieze,
its elements $v_{i,j}$ are labeled by one index~$i\in\Z$.
We proceed by induction on $i$.

The base of induction is given by the three trivial elements
$v_{j-3,j}=v_{j-2,j}=0$ and $v_{j-1,j}=1$ which obviously satisfy \eqref{MalDetEq}.

The induction step is as follows.
According to formula \eqref{rec1}, the elements $v_{i,j}$ satisfy
the recurrence \eqref{recur}.
One then has
$$
\begin{array}{rcl}
v_{i,j}&=&a_i\,v_{i-1,j}-b_i\,v_{i-2,j}+v_{i-3,j}\\[6pt]
&=&a_i\left|
V_{j-3},\,V_{j-2},\,V_{i-1}
\right|-b_i\left|
V_{j-3},\,V_{j-2},\,V_{i-2}
\right|+\left|
V_{j-3},\,V_{j-2},\,V_{i-3}
\right|\\[6pt]
&=&\left|
V_{j-3},\,V_{j-2},\,a_iV_{i-1}-b_iV_{i-2}+V_{i-3}
\right|\\[6pt]
&=&\left|
V_{j-3},\,V_{j-2},\,V_{i}.
\right|
\end{array}
$$
Hence the result.

The proof in the half-integer case is similar.
\end{proof}

\subsection{Proof of Proposition \ref{Period}}

Consider a numerical frieze $F(a_i,b_i)$
and assume that this frieze is closed, as in \eqref{DefClo},
of width $n-4$.
Choosing the diagonal $\Db_i$, let us determine the last
non-trivial element $v_{i+n-5,i}$:
$$
\begin{matrix}
1&1&1&1&1&1&1
 \\[4pt]
&a_i&&&&&
 \\
& &\ddots & & &&&
 \\[4pt]
 &&&{\bf v_{i+n-5,i}}&&&
\\[6pt]
 1&1&{\bf 1}&1&{\bf 1}&1&1
\\
0&{\bf 0}&0&0&0&{\bf 0}&0
\\
{\bf 0}&0&0&0&0&0&{\bf 0}
\end{matrix}
$$
Using the first recurrence relation in \eqref{rec1}, one has
$$
v_{i+n-2,i}=a_{i+n-2}\,v_{i+n-3,i}-b_{i+n-2}\,v_{i+n-4,i}+v_{i+n-5,i}.
$$
This implies
$v_{i+n-5,i}=b_{i+n-2}$, since $v_{i+n-2,i}=v_{i+n-3,i}=0$ and $v_{i+n-4,i}=1$.
On the other hand, using the second recurrence relation in \eqref{rec1}, one has
$$
v_{i+n-5,i-3}=a_{i-3}\,v_{i+n-5,i-2}-b_{i-2}\,v_{i+n-5,i-1}+v_{i+n-5,i}.
$$
This implies
$v_{i+n-5,i}=b_{i-2}.$
Combining these two equalities, one has $n$-periodicity:
$$
b_{i+n-2}=b_{i-2}.
$$
Similarly, choosing the diagonal $\Db_{i+\half}$, one obtains
$n$-periodicity of the $a_i$.
Finally, $2n$-periodicity on the first two rows implies $2n$-periodicity on each row.
Part (i) is proved.

In order to prove Part (ii), we continue to determine the entries of $\Db_i$:
$$
\begin{array}{rccccl}
v_{i+n-5,i}&&&&&
\\[6pt]
&1&&&&
\\
&&0&&&
\\
&&&0&&
\\
&&&&1&
\\[4pt]
&&&&&v_{i+n,i}
\end{array}
$$
Using \eqref{rec1}, we deduce that $v_{i+n-1,i}=1$ and,
using this relation again, 
$$
v_{i+n,i}=a_{i+n}=a_i=v_{i,i},
\qquad i\in\Z
$$
and similarly for $i\in\Z+\half$.
Part (ii) is proved.

Part (iii) follows from the equalities
$v_{i+n-5,i}=b_{i+n-2}=b_{i-2}$
proved in Part~(i).
Indeed, in the first non-trivial row, $b_{i-2}=v_{i-\frac{5}{2},i-\frac{5}{2}}$,
so that we rewrite the above equality as follows:
$v_{i,i}=v_{i+n-\frac{5}{2},i+\frac{5}{2}}$.
This means that the  first non-trivial row is related to the last one
by the desired glide symmetry.
Then using the 2-frieze rule we deduce that the same glide symmetry relates
the second row  with the one before the last, etc.
$$
\xymatrix{
1\ar@{--}[rd]\ar@<14pt>@{<->}^n[rrrrr]
&1\ar@{-}[ld]
&&\cdots&
 &1\ar@{--}[rd]\ar@<14pt>@{<-}^n[rr]
&1\ar@{-}[ld]\ar@{-}[rd]
&\cdots \ar@{--}[ld]
\\
 v_{\half,\half} \ar@{-}[rd] 
&v_{1,1}\ar@{--}[ld]
 \ar@{->}@/^/[rrrrdddd]
&&&
& v_{\frac{n}{2}-2,\frac{n}{2}+3}\ar@{-}[rd]\ar@{<-}@/^/[lllldddd]
&v_{\frac{n}{2}-\frac{3}{2},\frac{n}{2}+\frac{7}{2}}\ar@{--}[ld]\ar@{--}[rd]
&\cdots \ar@{-}[ld]
 \\
v_{1,n} \ar@{--}[rd]
&v_{\frac{3}{2},\half} \ar@{-}[ld]
&&&
&
 &
 &\cdots 
 \\
 \ar@{..}[d] &\ar@{..}[d]& &&  &\ar@{..}[d] \ar@{..}[u] &\ar@{..}[d]\ar@{..}[u] &\ar@{..}[d] \ar@{..}[u] 
 \\
\ar@{-}[rd]
&\ar@{--}[ld]
&&&
 &v_{1,n} \ar@{-}[rd]
&v_{\frac{3}{2},\half}\ar@{--}[ld]\ar@{--}[rd]
&\ar@{-}[ld]
 &&
 \\
v_{\frac{n}{2}-2,\frac{n}{2}+3} \ar@{--}[rd]
&v_{\frac{n}{2}-\frac{3}{2},\frac{n}{2}+\frac{7}{2}}\ar@{-}[ld]
&&&
&v_{\half,\half} \ar@{--}[rd]
&v_{1,1} \ar@{-}[ld]\ar@{-}[rd]
&\cdots \ar@{--}[ld]
 \\
1&1&&\cdots &&1&1&\cdots
}
$$

Proposition \ref{Period} is proved.

\subsection{Proof of Theorem \ref{MainOne}}

Given a closed numerical frieze $F(a_i,b_i)$,
let us show that all the solutions of the corresponding
difference equation \eqref{recur} are periodic.
Proposition \ref{Period}, Part~(ii) implies that
all the diagonals $\Db_j$ are $n$-periodic.
Take three consecutive diagonals, say $\Db_1,\Db_2,\Db_3$,
they provide linearly independent periodic solutions
$(v_{i,1},\,v_{i,2},\,v_{i,3})$ to \eqref{recur}.
It follows that every solution is periodic, so that the
monodromy matrix $M$ is the identity.

Conversely, suppose that all the solutions of \eqref{recur} are periodic,
i.e. the monodromy is the identity.
Consider the  frieze $F(a_i,b_i)$.
We have proved that the diagonals $\Delta_i$ with $i\in\Z$ satisfy the recurrence equation
\eqref{rec1}, which is nothing else but \eqref{recur}.
Add formally two rows of zeroes above the first row of 1's:
$$
\begin{matrix}
 \cdots
& 0&0&0&0&0&\cdots
\\[4pt]
 \cdots
& 0&0&0&0&0&\cdots
 \\[4pt]
 \cdots
& 1&1&1&1&1&\cdots
 \\[4pt]
\cdots&b_i&a_i&b_{i+1}&a_{i+1}&b_{i+2}&\cdots
 \\
&\vdots &\vdots &\vdots &\vdots &\vdots &&
\end{matrix}
$$
One checks immediately that this changes nothing in the recurrence relation.
It follows that the diagonals  $\Delta_i$ with $i\in\Z$ are periodic.

The diagonals $\Delta_i$ with $i\in\Z+\half$ satisfy the recurrence equation \eqref{rec2}.
This is precisely the difference equation dual to \eqref{recur}.
The corresponding monodromy is the conjugation of the monodromy of \eqref{recur},
so that it is again equal to the identity.
It follows that the half-integer diagonals are also periodic.

In particular, two consecutive rows of 0's followed by a row of 1's will appear again. These rows are necessarily  preceded by a row 1's in order to satisfy \eqref{rec1} and \eqref{rec2}.
The 2-frieze pattern is closed.


\subsection{Difference equations, and polygons in space and in the projective plane}\label{relations}
In this section, we prove Proposition \ref{OSTProp}.

The relation between the difference equations \eqref{recur} and polygons was already established in 
Section \ref{Analvers} (combined with Section \ref{GeomSec}).
A difference equation has a 3-dimensional space of solutions, 
and these solutions form a sequence of vectors satisfying \eqref{recur}.
If the monodromy is trivial then the sequence of vectors is $n$-periodic.
The sequence of the $V_i$ (as vector in 3-dimensional space) then gives 
a closed polygon (otherwise, it is  twisted). 
It follows from \eqref{recur} that the determinant of every three consecutive vectors is the same. 
One can scale the vectors to render this determinant unit, 
and then the $\SL_3$-equivalence class of the polygon is uniquely determined. 

Conversely, a polygon satisfying the unit determinant condition gives rise 
to a difference equation (\ref{recur}): 
each next vector is a linear combination of the previous three, and one coefficient is equal to 1, 
due to the determinant condition. Two $\SL_3$-equivalent polygons yield the same difference equation, 
and a closed polygon yields an equation with trivial monodromy.

To prove that the projection $\tilde \cC_n\to \cC_n$ is bijective for $n$ not a multiple of 3, 
let  us construct the inverse map. Given an $n$-gon $(v_i)$ in the projective plane, 
let $\tilde V_i$ be some lift of the point $v_i$ to 3-space. We wish to rescale, 
$V_i=t_i \tilde V_i$, so that the unit determinant relation holds: $\det (V_{i-1}, V_i, V_{i+1})=1$ for all $i$. 
This is equivalent to the system of equations
$$
t_{i-1} t_i t_{i+1}=1/\det (\tilde V_{i-1}, \tilde V_i, \tilde V_{i+1})
$$
(the denominators do not vanish because every triple of consecutive vertices of a polygon is not collinear).
This system has a unique solution if $n$ is not  a multiple of 3. 
Furthermore, projectively equivalent polygons in the projective plane yield 
$\SL_3$-equivalent polygons in 3-space. This completes the proof of Proposition \ref{OSTProp}.

\begin{rem} 
\label{sides}
Two points in $\RP^2$ determine not one, but two segments, and a polygon in~$\RP^2$, 
defined as a cyclic collection of vertices, does not automatically have sides, 
i.e., segments connecting consecutive vertices.
For example, three points in general position determine four different triangles,
thought of as triples of segments sharing end-points. 
In contrast, for a polygon $(V_i)$ in $\R^3$, the sides are the segments $V_i V_{i+1}$. 
Thus, using the above described lifting,  one can make a canonical choice of sides of an 
$n$-gon in $\RP^2$, provided that $n$ is not a multiple of 3. 
Changing the orientation of a polygon does not affect the choice of the segments. 
The choice of the segments does not depend on the orientation of $\R^3$ either.
\end{rem}

\subsection{Convex polygons in space and in the projective plane}\label{ConvSubsect}

The proof of Proposition~\ref{PosConvPro} is immediate now.
According to formula~\eqref{MalDetEq}, all the entries of a real $2$-frieze pattern 
are positive if and only if the respective polygon in $\R^3$ is convex, as claimed. 

We shall now discuss the relation between convexity in space and in the projective plane. An $n$-gon $(v_i)$ in $\RP^2$ is called convex if there exists an affine chart in which the closed polygonal line $v_1 v_2 \dots v_n$ is convex. The space of convex $n$-gons in $\RP^2$ is denoted by 
$\cC_n^0$.

\begin{lem} \label{conv}
If $(V_i)$ is a convex $n$-gon in space then its projection to $\RP^2$ is a convex $n$-gon. Conversely, if $n$ is not a multiple of $3$ and $(v_i)$ is a convex $n$-gon in $\RP^2$ then its lift to $\R^3$ is a convex $n$-gon.
\end{lem}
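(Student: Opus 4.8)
The plan is to reduce both implications to the signs of the determinants $\det(V_{i-1},V_i,V_j)$: by definition $(V_i)$ is convex in $\R^3$ precisely when all of these (for $j\neq i-1,i$) are positive, while convexity in $\RP^2$ is detected, in a convex affine chart, by signed areas, which are again such determinants. For the first implication, suppose $(V_i)$ is convex in space. The key step is to produce a single linear functional that is positive on every vertex, namely $\ell(X)=\sum_{i=1}^{n}\det(V_{i-1},V_i,X)$. Evaluating at $V_k$, the two terms with $i=k$ and $i=k+1$ vanish and every remaining term equals $\det(V_{i-1},V_i,V_k)>0$, so $\ell(V_k)>0$ for all $k$ (there are $n-2\geq 1$ positive terms). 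Hence all vertices lie in the open half-space $\{\ell>0\}$, and projecting into the affine chart $\{\ell=1\}$ realizes $v_i$ by the points $\bar V_i=V_i/\ell(V_i)$. Since $\det(\bar V_{i-1},\bar V_i,\bar V_j)=\det(V_{i-1},V_i,V_j)/(\ell(V_{i-1})\ell(V_i)\ell(V_j))$ has the same positive sign as $\det(V_{i-1},V_i,V_j)$, every vertex lies strictly on one side of each edge line, which is exactly convexity of the $n$-gon $\bar V_1\cdots\bar V_n$ in this chart.

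For the converse, assume $3\nmid n$ and let $(v_i)$ be convex in $\RP^2$. Fix an affine chart in which $v_1\cdots v_n$ is a convex polygon, write $p_i\in\R^2$ for its vertices, and lift to $\hat V_i=(p_i,1)\in\R^3$. Convexity means that for each edge $p_{i-1}p_i$ all remaining vertices lie on one side, and traversing the boundary in a fixed orientation this side is the same for every edge; hence the determinants $\det(\hat V_{i-1},\hat V_i,\hat V_j)$ (twice the signed areas of the triangles $p_{i-1}p_ip_j$, for $j\neq i-1,i$) all share one common sign $\e\in\{\pm1\}$. The lift of $(v_i)$ to $3$-space is obtained by rescaling $V_i=s_i\hat V_i$ so that $\det(V_{i-1},V_i,V_{i+1})=1$, i.e. $s_{i-1}s_is_{i+1}=1/\det(\hat V_{i-1},\hat V_i,\hat V_{i+1})$; as in the proof of Proposition \ref{OSTProp}, this system has a unique real solution because $n$ is not a multiple of $3$. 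Since $\det(V_{i-1},V_i,V_j)=s_{i-1}s_is_j\,\det(\hat V_{i-1},\hat V_i,\hat V_j)$, convexity of the lift will follow once I show that all $s_i$ have the same sign.

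The main obstacle, and the only point where $3\nmid n$ is genuinely used, is this sign control. The right-hand sides $1/\det(\hat V_{i-1},\hat V_i,\hat V_{i+1})$ all have sign $\e$, so writing $\sigma_i=\mathrm{sign}(s_i)\in\{\pm1\}$ the defining equations give $\sigma_{i-1}\sigma_i\sigma_{i+1}=\e$ for every $i$. A short computation shows that the resulting recurrence forces $(\sigma_i)$ to be $3$-periodic; being simultaneously $n$-periodic with $\gcd(n,3)=1$, it must be constant, and a constant solution of $\sigma^3=\e$ gives $\sigma_i=\e$ for all $i$. Therefore each product $s_{i-1}s_is_j$ has sign $\e$ and, multiplied by $\det(\hat V_{i-1},\hat V_i,\hat V_j)$ of sign $\e$, yields $\det(V_{i-1},V_i,V_j)>0$ for all $j\neq i-1,i$. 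Thus the lift $(V_i)$ is convex, which completes the proof.
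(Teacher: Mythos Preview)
Your proof is correct in both directions, but the route differs from the paper's in each.

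For the forward implication, the paper finds an affine chart by a perturbation argument: it takes the plane through $V_1,V_2$, moves it slightly to a plane $\pi_\e$ so that all $V_i$ lie on one side, and then projects radially. You instead exhibit the separating functional explicitly as $\ell(X)=\sum_i\det(V_{i-1},V_i,X)$ and check $\ell(V_k)>0$ directly from the convexity hypothesis. This is more economical and avoids the limiting argument; the price is that one must know in advance what $\ell$ to write down.

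For the converse, the paper places the convex projective polygon in the height-one plane and asserts that, ``with the proper orientation'', the determinants $\det(\hat V_{i-1},\hat V_i,\hat V_j)$ are all positive; it then says that since these are positive, the rescaling factors $t_i$ are positive as well. You do not assume a preferred orientation: you allow the common sign $\e\in\{\pm1\}$ and argue that the sign equations $\sigma_{i-1}\sigma_i\sigma_{i+1}=\e$ force $(\sigma_i)$ to be $3$-periodic, hence constant when $\gcd(n,3)=1$, hence $\sigma_i\equiv\e$ (from $\sigma^3=\sigma=\e$). This makes the sign control completely explicit and shows precisely where $3\nmid n$ enters the positivity step, whereas the paper's phrasing leaves both the orientation normalization and the deduction $t_i>0$ somewhat implicit. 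Either approach works; yours is a bit more transparent about the arithmetic of signs.
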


\begin{proof}
Let $(V_i)$ be a convex $n$-gon in space. Let $\pi$ be the oriented plane spanned by $V_1$ and $V_2$. Let $V_1^{\e}$ and $V_2^{\e}$ be points on the negative side of $\pi$ that are $\e$-close to $V_1$ and $V_2$. Let $\pi_{\e}$ be the plane spanned by $V_1^{\e}$ and $V_2^{\e}$. If $\e$ is a sufficiently small positive number, all the points $V_i$ lie on one side of $\pi_{\e}$. Without loss of generality, we may assume that $\pi_{\e}$ is the horizontal plane and all points $V_i$ are in the upper half-space. Consider the radial projection of the polygon $(V_i)$ on the horizontal plane at height one. This plane provides an affine chart of  $\RP^2$. The resulting polygon, $(v_i)$, has the property that, for every $i$ and every $j\ne i-1,i$, the vertex $v_j$ lies on the positive side of the line $v_{i-1} v_i$. Hence this projection is a convex polygon in this plane.

Conversely, let $(v_i)$ be a convex polygon in the projective plane. As before, we assume that the vertices are located in the horizontal plane in $\R^3$ at height one. Convexity implies that the polygon lies on one side of the line through each side, that is, with the proper orientation, 
that $\det (v_{i-1},v_i,v_j)>0$ for all $i$ and $j\ne i-1,i$. One needs to rescale the vectors, $V_i=t_i v_i$, to achieve the unit determinant condition on triples of consecutive vectors. Since the determinants are already positive, it follows that $t_i>0$ for all $i$. Therefore $\det (V_{i-1},V_i,V_j)>0$ for all $i$ and $j\ne i-1,i$, and $(V_i)$ is a convex polygon.
\end{proof}

\subsection{The space $\cC_{3m}$ and the Fock-Goncharov variety}\label{FGSec}

Consider the special case
of the space~$\cC_n$ with $n=3m$.
As already mentioned, in this case, the space $\cC_n$ is not isomorphic to 
the space of difference equations \eqref{recur} and therefore it is not isomorphic to 
the space of closed 2-frieze patterns.
We discuss this special case for the sake of completeness and because it provides an interesting link to another cluster variety.

In \cite{FG, FG1} (see also \cite{OT}, Section 6.5) Fock and Goncharov
introduced and thoroughly studied the space $\cP_n$
consisting of pairs of convex $n$-gons $(P,P')$ in $\RP^2$,
modulo projective equivalence,
such that $P'$ is inscribed into~$P$.

The following statement relates the space $\cP_{2m}$
and the space $\cC^0_{3m}$ of convex
$3m$-gons.

\begin{prop}
\label{FockSpaceProp}
The space $\cC^0_{3m}$ is isomorphic to the space $\cP_{2m}$.
\end{prop}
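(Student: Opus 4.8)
The plan is to construct an explicit isomorphism $\cC^0_{3m}\to\cP_{2m}$ that exploits the special structure forced by $n$ being a multiple of $3$. The key geometric observation is that when $n=3m$, the vertices of a convex $3m$-gon $(v_i)$ in $\RP^2$ split naturally into two interlaced subsequences according to the residue of the index modulo $3$ (or, more usefully, modulo a pattern that produces two $2m$-gons). First I would recall from Lemma \ref{conv} and Proposition \ref{OSTProp} that for $n=3m$ the lifting to $\R^3$ is \emph{not} canonical --- the rescaling system $t_{i-1}t_it_{i+1}=1/\det(\tilde V_{i-1},\tilde V_i,\tilde V_{i+1})$ fails to have a unique solution precisely because $3\mid n$. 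This failure is exactly what leaves a residual degree of freedom, and the heart of the proof is to identify that residual data with the second polygon $P'$ inscribed in $P$.

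Concretely, I would proceed as follows. Given a convex $3m$-gon $P=(v_1,\dots,v_{3m})$ in $\RP^2$, group the vertices into $m$ consecutive triples. For each triple of consecutive vertices, the three supporting lines (sides) of the polygon meeting near that triple determine, by intersection, a new point; collecting these intersection points produces an inscribed $2m$-gon. More precisely, I would define $P$ to be the outer polygon built from every other group of vertices and $P'$ the inner polygon obtained from the intersection points of the appropriate diagonals or sides, arranged so that each vertex of $P'$ lies on a side of $P$ (the inscription condition). The map in the reverse direction takes a pair $(P,P')$ with $P'$ inscribed in $P$ and interlaces the $2m$ vertices of $P$ with the $2m$ vertices of $P'$, together with auxiliary points forced by the inscription incidences, to recover a single convex $3m$-gon. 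The two constructions should be manifestly inverse to each other once the combinatorial bookkeeping of indices is fixed.

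To verify this is an isomorphism of spaces (not merely a bijection of sets), I would check three things in order. First, that the construction is $\PSL_3$-equivariant, so that it descends to the quotients defining $\cC^0_{3m}$ and $\cP_{2m}$; this is immediate since every step (taking sides, intersecting lines, inscribing) is projectively natural. Second, that convexity of $P$ as a $3m$-gon corresponds exactly to the Fock--Goncharov convexity and inscription conditions for the pair $(P,P')$; here I would use the positivity characterization of convexity, namely $\det(v_{i-1},v_i,v_j)>0$, to translate convexity of the $3m$-gon into the statement that $P'$ is convex and genuinely inscribed in $P$. Third, that both maps are algebraic (biregular), which follows because they are given by determinants and line intersections, i.e. rational functions with non-vanishing denominators on the convex locus.

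The main obstacle I anticipate is the combinatorial matching of the inscription incidences: one must arrange the interlacing of the two $2m$-gons so that each vertex of the inner polygon lands on the correct side of the outer polygon, and simultaneously so that the resulting $3m$ points satisfy the polygon condition $\det(v_i,v_{i+1},v_{i+2})\neq 0$ (no three consecutive collinear). Getting the index conventions consistent between the $n=3m$ labelling and the $2m$ labelling of the Fock--Goncharov pair, and confirming that convexity is preserved in both directions rather than merely a weaker genericity condition, is where the real work lies; the algebraic and equivariance checks should then be routine.
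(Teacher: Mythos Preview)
Your proposal has a genuine counting problem that is not cosmetic. You propose to obtain the two $2m$-gons by splitting the $3m$ vertices into two subsequences and, conversely, to recover the $3m$-gon by interlacing the $2m$ vertices of $P$ with the $2m$ vertices of $P'$; but $2m+2m=4m\neq 3m$, and no amount of index bookkeeping repairs this. Likewise, ``for each triple of consecutive vertices \dots a new point; collecting these intersection points produces an inscribed $2m$-gon'' yields $m$ points, not $2m$. The suggestion that the non-uniqueness of the lift to $\R^3$ is the ``heart of the proof'' is also off track: that ambiguity is only two-dimensional, whereas $P'$ carries $2m$ parameters (one for each vertex on a side of $P$). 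The paper's argument is entirely in $\RP^2$ and never mentions lifting.

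The missing idea is that \emph{half} of the vertices of the outer polygon are \emph{new} points, not original vertices. The paper's construction is: fix a residue class $i\pmod 3$; for each such $v_i$ form the intersection point $w_i$ of the extended sides $(v_i,v_{i+1})$ and $(v_{i+2},v_{i+3})$. The outer $2m$-gon $P$ has vertices $\ldots, v_i, w_i, v_{i+3}, w_{i+3}, \ldots$ (alternating original and new). Its sides lie along the lines $(v_i,v_{i+1}),\,(v_{i+2},v_{i+3}),\,(v_{i+3},v_{i+4}),\ldots$, so the $2m$ remaining original vertices $v_{i+1},v_{i+2},v_{i+4},v_{i+5},\ldots$ sit one per side of $P$: this is the inscribed polygon $P'$. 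The inverse discards the $m$ new vertices $w_i$ of $P$ and interleaves the surviving $m$ vertices of $P$ with the $2m$ vertices of $P'$, giving $m+2m=3m$. Equivariance and biregularity are then immediate, as you say, but only once this construction is in hand.
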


\begin{proof}
The proof consists of a construction,
see Figure \ref{SecondFig}.
Consider a convex $3m$-gon.

\begin{enumerate}
\item
Choose a vertex $v_i$ and draw the short diagonal $(v_{i-1},\,v_{i+1})$.

\item
Extend the sides $(v_i,\,v_{i+1})$ and $(v_{i+2},\,v_{i+3})$
to their intersection point.

\item
Repeat the procedure starting from $v_{i+3}$.
\end{enumerate}

\noindent
One obtains a pair of $2m$-gons inscribed one into the other.
The procedure is obviously bijective and commutes with the $\SL(3,\R)$-action.
\end{proof}

\begin{figure}[hbtp]
\includegraphics[width=10cm]{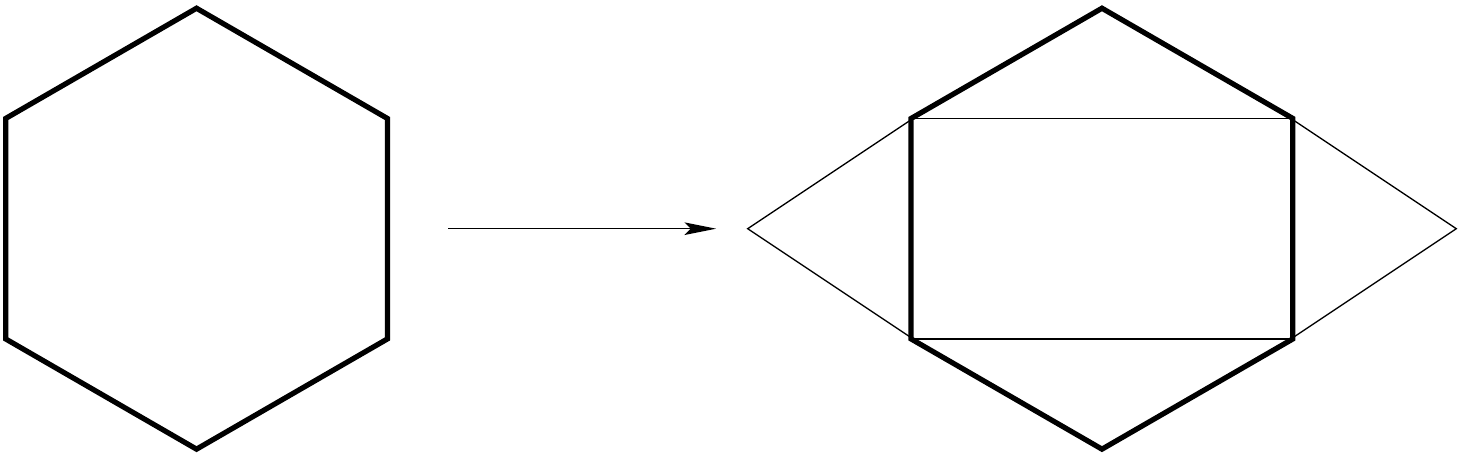}
\caption{From a hexagon to a pair of inscribed quadrilaterals}
\label{SecondFig}
\end{figure}

\begin{rem}
Choosing the vertex $v_{i+1}$ or $v_{i+2}$ in the above construction,
one changes the identification between the spaces $\cC^0_{3m}$ and $\cP_{2m}$.
This, in particular, defines a map $\tau$ from $\cP_{2m}$ to $\cP_{2m}$, such that $\tau^3=\Id$.
\end{rem}

\section{Closed 2-friezes as cluster varieties}\label{ClustSect}

We now give a description of the space
of all closed 2-frieze patterns.
This is an 8-codimen\-sional subvariety
of the space $\C^{2n}$ (or $\R^{2n}$) identified with the space
of $2n$-periodic patterns.
We will characterize this variety using the technique of cluster
manifolds.

\subsection{Cluster algebras }

Let us recall the construction of Fomin-Zelevinsky's cluster algebras~\cite{FZ1}.
A cluster algebra $\A$ is a commutative associative algebra.
This is a subalgebra of a field of rational fractions
in $N$ variables, where $N$ is called the rank of $\A$.
The algebra $\A$ is presented by generators and relations. 
The generators are collected in packages called \textit{clusters}, the 
relations between generators are obtained by applying a series 
of specific elementary relations called the \textit{exchange relations}. 
The exchange relations are encoded via a matrix, or an oriented graph
with no loops and no $2$-cycles.

The explicit construction of the (complex or real) cluster algebra  $\A(\Qc)$
associated to a finite oriented graph $\Qc$ is as follows.
Let $N$ be the number of vertices of $\Qc$,
the set of vertices is then identified with the set $\{1, \ldots, N\}$.
The algebra $\A(\Qc)$ is a subalgebra of the field of fractions $\C(x_1,\ldots, x_N)$ in $N$  
variables $x_1,\ldots, x_N$ (or over $\R$, in the real case). 
The generators and relations of $\A(\Qc)$ are given using a recursive procedure called
\textit{seed mutations} that we describe below.

A \textit{seed} is a couple 
$$
\Sigma=\left(\{t_1, \ldots, t_N\} , \;\Rc\right),
$$
where $\Rc$ is an arbitrary finite oriented graph with $N$ vertices
and where $t_1, \ldots, t_N$ are free generators of $\C(x_1,\ldots, x_N)$. 
The \textit{mutation at  vertex}  $k$ of the seed 
$\Sigma$ is a new seed $\mu_k(\Sigma )$ defined by
\begin{enumerate}
\item[\textbullet]
$\mu_k(\{t_1, \ldots, t_N\})=\{t_1, \ldots, t_{k-1},t'_k, t_{k+1},\ldots, t_N\}$ where
$$\displaystyle
t'_k=\dfrac{1}{t_k}\left(\prod\limits_{\substack{\text{arrows in }\Rc\\ i\rightarrow k }}\; t_i
 \quad+\quad 
\prod\limits_{\substack{\text{arrows in }\Rc\\ i\leftarrow k }}\;t_i\right)
$$
\item[\textbullet] 
$\mu_k(\Rc)$ is the graph obtained from $\Rc$ by applying the following transformations
\begin{enumerate}
\item for each possible path $i\rightarrow k \rightarrow j$ in $\Rc$, add an arrow $i\rightarrow j$,
\item reverse all the arrows leaving or arriving at $k$,
\item remove all the possible 2-cycles, 
\end{enumerate}
\end{enumerate}
(see Example \ref{exmut}  below for a seed mutation).

Starting from the initial seed $\Sigma_0=(\{x_1,\ldots, x_N\}, \Qc)$, one produces  $N$ new seeds 
$\mu_k(\Sigma_0)$, $k=1,\ldots, N$. 
Then one applies all the possible mutations to all of the created new seeds, and so on.
The set of rational functions appearing in any of the seeds produced during the mutation process
is called a \textit{cluster}. 
The functions in a cluster are called \textit{cluster variables}.
The cluster algebra $\A(\Qc)$ is the subalgebra of  $\C(x_1,\ldots, x_N)$ generated by all the cluster variables.

\begin{ex}\label{exmut}
In the case $n=4$, consider the seed $\Sigma=(\{t_1,t_2,t_3,t_4\},\Rc)$, where

\xymatrix{&&&&
\Rc=
&1\ar@{->}[r]
&2\ar@{->}[d]
\\
&&&&
&3\ar@{<-}[r]\ar@{->}[u]
&4
}
The mutation at vertex 1 gives 
$$
\mu_1(\{t_1,t_2,t_3,t_4\})=\Big\{\frac{t_2+t_3}{t_1},t_2,t_3,t_4\Big\}
$$ and 

\xymatrix{&&&&
\mu_1(\Rc)=
&1\ar@{<-}[r]
&2\ar@{->}[d]\ar@{<-}[ld]
\\
&&&&
&3\ar@{<-}[r]\ar@{<-}[u]
&4
}

\medskip
In this example, one can show that the mutation process is finite. 
This means that applying all the possible mutations to all the seeds leads to a finite number of seeds and therefore to a finite number (24) of cluster variables.
One can also show that among the graphs obtained through iterated mutations is the Dynkin graph of type $D_4$. 
The cluster algebra $A(\Rc)$ in this example is referred to as the cluster algebra of type $D_4$.
\end{ex}

\subsection{The algebra of regular functions on $\F_n$}

In the case of the oriented graph \eqref{Graph}, the cluster algebra $\A(\Qc)$
has an infinite number of generators (for $n\geq9$).
In this section, we consider the algebra of regular functions
on $\F_n$ and show that this is a subalgebra of $\A(\Qc)$.
>From now on, $\Qc$ always stands for the oriented graph \eqref{Graph}.

The space of closed 2-friezes $\F_n$ is an algebraic manifold, $\F_n\subset\C^{2n}$
(or $\R^{2n}$ in the real case), defined by the trivial monodromy
condition $M=\Id$, that can be written as 8 polynomial identities.
The algebra of regular functions on $\F_n$ is then defined as
$$
\A_n=
\C[A_1,\ldots,A_n,B_1,\ldots,B_n]/\mathcal{I},
$$
where $\mathcal{I}$ is the ideal generated by $(M-\Id)$.
Let us describe the algebra $\A_n$ in another way.

We define the following system of coordinates on the space $\F_n$.
Consider $2n-8$ independent variables $(x_1,\ldots, x_{n-4},y_1,\ldots, y_{n-4})$
and place them into two consecutive columns on the frieze:
\begin{equation}
\label{ClustF}
 \begin{matrix}
& 1&1&1&1&1&\cdots
 \\[4pt]
&x_1&y_1&\cdots&&&
 \\[4pt]
&y_2&x_2&\cdots&&&
 \\[4pt]
 &x_3&y_3&\cdots&&&
 \\[4pt]
 &y_4&x_4&\cdots&&&
 \\
&\vdots &\vdots & & & &&\\
& 1&1&1&1&1&\cdots
\end{matrix}
\end{equation}
Applying the recurrence relations, complete the 2-frieze pattern by
rational functions in $x_i,y_j$.
Since the 2-frieze pattern (\ref{ClustF}) is closed,
Proposition \ref{Period} implies that 
the closed 2-frieze pattern (\ref{ClustF}) contains $n(n-4)$ distinct entries
modulo periodicity.

\begin{ex}
\label{ClustFive}
Case $n=5$
$$
\begin{array}{cccccccccccccccc}
\cdots &1 & 1 & 1 & 1& 1& 1& 1& \cdots\\[6pt]
\cdots & x & y&   \frac{y+1}{x}&  \frac{x+y+1}{xy}& \frac{x+1}{y}&x & y&\cdots \\[6pt]
\cdots &1& 1& 1& 1& 1& 1& 1& \cdots \\
\end{array}
$$
In this case, $\A_5\simeq \A(1\rightarrow2)$.
\end{ex}

\begin{ex}
\label{ClustSix}
Case $n=6$
\begin{equation*}
\begin{array}{cccccccccccccccc}
\cdots &1 & 1 & 1 & 1& 1& 1&  1& 1& \cdots\\[6pt]
\cdots & x_1 & y_1&   \frac{y_1+x_2}{x_1}& \frac{(y_1+x_2)(y_2+x_1)}{x_1y_1y_2}
& \frac{(x_1+y_2)(x_2+y_1)}{x_2y_1y_2} & \frac{x_1+y_2}{x_2}&y_2 & x_2&\cdots \\[6pt]
\cdots & y_2 & x_2&   \frac{x_2+y_1}{y_2}& \frac{(x_2+y_1)(x_1+y_2)}{y_2x_2x_1}
& \frac{(y_2+x_1)(y_1+x_2)}{y_1x_1x_2} & \frac{y_2+x_1}{y_1}&x_1 & y_1&\cdots \\[6pt]
\cdots &1& 1& 1& 1& 1& 1& 1& 1& \cdots \\
\end{array}
\end{equation*}
In this case, the algebra $\A_6$ is isomorphic to a proper subalgebra of $ \A(D_4)$.
\end{ex}

\begin{prop}
\label{ComAlgProp}
(i)
The algebra $\A_n$ is isomorphic to the subalgebra of
the algebra of rational functions
$\C(x_1,\ldots,x_{n-4},y_1,\ldots,y_{n-4})$
generated by the entries of the 2-frieze \eqref{ClustF}.

(ii)
The algebra $\A_n$ is a subalgebra of the cluster algebra
$\A(\Qc)$, where $\Qc$ is the graph \eqref{Graph}.
\end{prop}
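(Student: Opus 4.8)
The plan is to build an explicit ring homomorphism $\bar\varphi\colon\A_n\to\C(x_1,\dots,x_{n-4},y_1,\dots,y_{n-4})$, identify its image with the subalgebra generated by the frieze entries, prove it injective, and then locate that image inside $\A(\Qc)$. First I would construct $\bar\varphi$. Placing the variables $x_i,y_j$ in two consecutive columns as in \eqref{ClustF} and propagating the recurrence relations of Proposition \ref{proprec} (equivalently the pattern rule) to the left and right expresses every entry of the resulting frieze as a rational function in the $x_i,y_j$; in particular the first nontrivial row yields expressions $A_i=A_i(x,y)$ and $B_i=B_i(x,y)$. Sending each generator $A_i,B_i$ of $\C[A_1,\dots,B_n]$ to this rational function defines a homomorphism $\varphi$. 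Because the pattern built from \eqref{ClustF} is bounded above and below by rows of $1$'s, it is a closed frieze, so by Theorem \ref{MainOne} the associated difference equation \eqref{recur} has trivial monodromy; hence the eight polynomials generating $\mathcal{I}$ vanish on $(A_i(x,y),B_i(x,y))$ and $\varphi$ descends to $\bar\varphi\colon\A_n\to\C(x,y)$.

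For part (i) it remains to identify the image and prove injectivity. Every entry of the frieze is, by Proposition \ref{TabProp}, a polynomial in the $A_i,B_i$, so $\bar\varphi(\A_n)$ contains all frieze entries; conversely $\A_n$ is generated by the first-row entries $A_i,B_i$, so $\bar\varphi(\A_n)$ is \emph{precisely} the subalgebra $S\subseteq\C(x,y)$ generated by all frieze entries, which gives surjectivity onto $S$. Injectivity is the only real content: the parametrization $\psi\colon(x,y)\mapsto\{\text{frieze}\}$ whose comorphism is $\bar\varphi$ admits a rational left inverse, since the coordinates $x_i,y_j$ are themselves among the entries and can simply be read off from the frieze. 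Hence $\psi$ is generically injective, its image has dimension $2n-8=\dim\F_n$, and $\psi$ is therefore dominant; the comorphism of a dominant map onto the irreducible $(2n-8)$-dimensional variety $\F_n$ is injective. This dominance (indeed birationality) is exactly the content of Theorem \ref{MainSecond}(i), which may be quoted directly, so $\A_n\cong S$.

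For part (ii) I would realize each frieze entry as a cluster variable of $\A(\Qc)$. The initial cluster $\{x_1,\dots,x_{n-4},y_1,\dots,y_{n-4}\}$ is attached to the $2n-8$ vertices of $\Qc$ following the two-column layout of \eqref{ClustF}, and the plan is to show that advancing the two-column window by one column in the frieze is realized by a prescribed sequence of seed mutations, the exchange relation at each step reproducing the $2$-frieze rule. Concretely one checks that mutating $\Qc$ at the vertices lying on no two $3$-cycles returns $\Qc$ up to the parity-dependent reorientation of the terminal square noted after \eqref{Graph}, so that the process iterates, and that the two monomials $\prod_{i\to k}t_i$ and $\prod_{i\leftarrow k}t_i$ of the mutation formula are exactly the two products of frieze-neighbours. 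The case $n=5$ of Example \ref{ClustFive}, where the entries are literally the type-$A_2$ cluster variables, is the model computation. The main obstacle is precisely this matching: one must verify that the minus sign in the octahedron recurrence is absorbed into the chosen orientation of $\Qc$, so that the additive rule $E=AD-BC$ becomes a genuine all-plus cluster exchange relation, and that the quiver is mutation-periodic along the window-advancing moves; this is the cluster-algebraic translation of the $T$-system studied in \cite{Volk,DiF,DK1,DK2,Kel}. Granting this, every entry of \eqref{ClustF} is a cluster variable, whence $S\subseteq\A(\Qc)$, and combining with part (i) we obtain $\A_n\cong S\subseteq\A(\Qc)$.
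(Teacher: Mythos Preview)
Your argument for part (i) is sound and considerably more careful than the paper's, which simply says ``the isomorphism is obtained by sending $B_1,A_1,\ldots,B_n,A_n$ to the entries of the first line'' and does not address injectivity at all. One caveat: you invoke Theorem~\ref{MainSecond}(i) to get dominance, but that theorem is proved \emph{after} this proposition in the paper; since its proof rests only on the Laurent phenomenon and not on the present proposition, there is no circularity, but you should flag the forward reference. You also silently assume $\F_n$ is irreducible when you pass from ``image has dimension $2n-8$'' to ``$\psi$ is dominant''; this is harmless here but should be said.

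Part (ii) has a genuine gap in the execution. The paper's mechanism is the \emph{bipartite belt}: since $\Qc$ is bipartite one assigns signs $\varepsilon(i)=\pm$ to the vertices, sets $\mu_+=\prod_{\varepsilon(i)=+}\mu_i$ and $\mu_-=\prod_{\varepsilon(i)=-}\mu_i$ (each product is over pairwise non-adjacent vertices, hence commutes), and checks directly that $\mu_+(\Qc)=\Qc^{\mathrm{op}}$ and $\mu_-(\Qc^{\mathrm{op}})=\Qc$. The seeds $\Sigma_0,\mu_+\Sigma_0,\mu_-\mu_+\Sigma_0,\dots$ are then exactly the successive two-column windows of the frieze. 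Your description --- ``mutating $\Qc$ at the vertices lying on no two $3$-cycles returns $\Qc$ up to the parity-dependent reorientation of the terminal square'' --- is not correct: the initial $\Qc$ has \emph{no} $3$-cycles, so your condition selects every vertex, but without the bipartite grouping those mutations neither commute nor produce $\Qc^{\mathrm{op}}$; and the effect of $\mu_\pm$ is to reverse \emph{all} arrows, not merely the last square. The language of ``vertices not on two $3$-cycles'' belongs to the later zig-zag discussion (Proposition~\ref{MutProp}), which is a different set of clusters. Finally, your worry about absorbing the minus sign of the octahedron rule is misplaced: the relevant exchange relation here is not $E=AD-BC$ but rather $x_k x_k'=\prod_{i\to k}t_i+\prod_{k\to i}t_i$, which is already all-plus; for instance in Example~\ref{ClustSix} one has $x_1\cdot\frac{y_1+x_2}{x_1}=y_1+x_2$ on the nose.
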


\begin{proof}
(i)
The entries of \eqref{ClustF} are polynomials in $2n$ consecutive
entries of the first row (see Proposition \ref{TabProp}).
The isomorphism is then obtained by sending $B_1,A_1,\ldots,B_n,A_n$
to the entries of the first line.

(ii)
Consider $\Sigma_0=(\{x_1,\ldots, x_{n-4},y_1,\ldots, y_{n-4}\}, \Qc)$ as an initial seed.
The variable $x_i$ is associated to the vertex $i$ of $\Qc$ 
and the variable $y_i$ to the vertex $n-4+i$.
We need to prove that all the entries of \eqref{ClustF} are cluster variables.

The graph $\Qc$ is bipartite. 
One can associate a sign $\e(i)=\pm$ to each vertex 
of the graph so that any two connected vertices in
$\Qc$ have different signs. Let us assume that $\e(1)=+$ 
(this determines automatically all the signs of the vertices).

Following Fomin-Zelevinsky \cite{FZ4},
consider the iterated mutations
$$
\mu_+=\prod_{i: \e(i)=+}\;\mu_i, \qquad 
\mu_-=\prod_{i:\e(i)=-}\;\mu_{i}.
$$
Note that $\mu_i$ with $\e(i)$ fixed commute with each other.

It is important to notice that
the result of the mutation of the graph \eqref{Graph} by $\mu_+$ and $\mu_-$
is the same graph with reversed orientation:
$$
\mu_+(\Qc)=\Qc^{\hbox{op}},
\qquad
\mu_-(\Qc^{\hbox{op}})=\Qc.
$$
This is a straightforward verification.

Consider the seeds of $\A(\Qc)$
obtained from $\Sigma_0$ by applying successively $\mu_+$ or $\mu_{-}$:
\begin{equation}
\label{BeltB}
\Sigma_0,\quad
\mu_+(\Sigma_0),\quad \mu_-\mu_+(\Sigma_0),\quad \ldots, \quad
\mu_{\pm}\mu_{\mp}\cdots \mu_-\mu_+(\Sigma_0),\quad \ldots
\end{equation}
This set is called the bipartite belt of $\A(\Qc)$, see \cite{FZ4}.
The cluster variables in each of the above seeds correspond
precisely to two consecutive columns in the 2-frieze pattern \eqref{ClustF}.

Proposition \ref{ComAlgProp} is proved.
\end{proof}

\begin{rem}
Periodicity of the sequence \eqref{BeltB} follows from
Proposition \ref{Period}.
This periodicity of closed 2-frieze patterns, expressed in cluster variables,
is a particular case of the general periodicity theorem in cluster algebra,
see \cite{Volk, Kel} and references therein.
Our proof of this result is based on simple properties of solutions
of the difference equation~\eqref{recur} and is given for the sake of completeness.
\end{rem}

\subsection{Zig-zag coordinates}\label{FunClo}
In this section, we prove
Theorem \ref{MainSecond} and Proposition \ref{Boundme}.
To this end, we introduce a number of coordinate systems on the space of 2-friezes.

We define another system of coordinates on $\F_n$.
Draw an arbitrary \textit{double zig-zag} in the 2-frieze \eqref{ClustF}
and denote by
$(\widetilde{x}_1,\ldots, \widetilde{x}_{n-4},\widetilde{y}_1,\ldots, \widetilde{y}_{n-4})$
the entries lying on this double zig-zag:
\begin{equation}
\label{DobZag}
 \begin{matrix}
\cdots & 1&1&1&1&1&\cdots
 \\[4pt]
&&\widetilde{x}_1&\widetilde{y}_1&&&
 \\[4pt]
&&&\widetilde{x}_2&\widetilde{y}_2&&
 \\[4pt]
& &\widetilde{x}_3&\widetilde{y}_3&&&
 \\
&&\vdots &\vdots & & & &\\
\cdots& 1&1&1&1&1&\cdots
\end{matrix}
\end{equation}
in such a way that
$\widetilde{x}_i$ stay at the entries with integer indices
and $\widetilde{y}_i$ stay at the entries with half-integer indices.

More precisely, a double zig-zag of coordinates is defined as follows.
The coordinates 
$\widetilde{x}_i$ and~$\widetilde{y}_i$ in the $i$-th row, are followed by
the coordinates $\widetilde{x}_{i+1}$ and $\widetilde{y}_{i+1}$ in one of the
three possible ways:
$$
\xymatrix{
&\widetilde{x}_i\ar@{->}[ld]
&\widetilde{y}_i\ar@{->}[ld]
&&
\widetilde{x}_i\ar@{->}[rd]
&\widetilde{y}_i\ar@{->}[ld]
&&
\widetilde{x}_i\ar@{->}[rd]
&\widetilde{y}_i\ar@{->}[rd]
\\
\widetilde{x}_{i+1}
&
\widetilde{y}_{i+1}
&&&
\widetilde{y}_{i+1}
&
\widetilde{x}_{i+1}
&&&
\widetilde{x}_{i+1}
&
\widetilde{y}_{i+1}
}
$$

Denote by $\cZ$ the set of all double zig-zags.
For an arbitrary double zig-zag $\zeta\in\cZ$, the 
corresponding functions  
$(\widetilde{x}_1,\ldots, \widetilde{x}_{n-4},\widetilde{y}_1,\ldots, \widetilde{y}_{n-4})_\zeta$
are rational expressions in $(x_i,y_i)$.

\begin{prop}
\label{MutProp}
(i)
For every double zig-zag $\zeta\in\cZ$,
the coordinates $(\widetilde{x}_i, \widetilde{y}_i)_\zeta$
form a cluster in the algebra $\A(\Qc)$, where $\Qc$ is the graph \eqref{Graph}.

(ii)
A cluster in $\A(\Qc)$ coincides with the coordinate system
$(\widetilde{x}_i, \widetilde{y}_i)_\zeta$
for some $\zeta\in\cZ$, if and only if it is
obtained from the initial cluster $(x_i,y_i)$ by 
mutations at vertices that do not  belong to two
3-cycles.
\end{prop}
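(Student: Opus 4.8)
The plan is to realise the combinatorial operation of deforming a double zig-zag by single cluster mutations, and then to pin down exactly which mutations are obtained this way. Throughout I keep the straight double column of \eqref{ClustF} as the reference configuration: by Proposition~\ref{ComAlgProp} it is the initial cluster $(x_i,y_i)$ of $\A(\Qc)$, and the belt mutations $\mu_+,\mu_-$ already identify the shifted double columns with clusters. I would first define an \emph{elementary move} on a double zig-zag: a deformation that changes the path at a single corner (one row), replacing exactly one entry $\widetilde{x}_i$ or $\widetilde{y}_i$ by the neighbouring frieze entry on the opposite side and leaving all other entries fixed. The three local continuations pictured above are the building blocks, and a short connectivity argument for such lattice paths shows that every $\zeta\in\cZ$ differs from the straight double column by a finite sequence of elementary moves. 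Thus part~(i) reduces to the single assertion that each elementary move is a seed mutation.

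The heart of the matter is this local assertion. Assume inductively that a zig-zag $\zeta$ is a cluster with associated quiver $\Rc_\zeta$, and let an elementary move replace the entry sitting at vertex $k$. I would check two things. First, the removed entry and the inserted one are opposite corners of an elementary frieze diamond, hence obey the $2$-frieze rule $E=AD-BC$; using the surrounding determinantal identities (Lemma~\ref{DetLem} and Proposition~\ref{TabProp}) this rewrites as a genuine exchange relation $t_k\,t_k'=\prod_{i\to k}t_i+\prod_{i\leftarrow k}t_i$ in which every factor on the right is again an entry of $\zeta$ — precisely the mechanism, already visible in Examples~\ref{ClustFive} and~\ref{ClustSix}, that converts the determinantal frieze rule into an additive mutation. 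Second, I would verify that recording the frieze adjacencies of the deformed path produces exactly $\mu_k(\Rc_\zeta)$, by matching the three steps of quiver mutation (shortcut arrows through $k$, reversal of the arrows at $k$, cancellation of $2$-cycles) against the change of the local adjacency pattern at the corner. Granting both, each elementary move is $\mu_k$, and part~(i) follows by induction along a path of moves joining the straight double column to $\zeta$. The main obstacle is the first step: one must organise the bookkeeping so that the minus sign in $E=AD-BC$ is absorbed by substituting the neighbouring frieze relations, and confirm that the surviving factors all lie in the current cluster; this is exactly where the admissible shape of the double zig-zag is used.

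For part~(ii) I would identify the vertices of $\Rc_\zeta$ lying in two $3$-cycles. In the quivers arising from double zig-zags, a ``straight-through'' entry sits at a pair of $3$-cycles glued along an edge, and the two endpoints of that shared edge are precisely the vertices belonging to two $3$-cycles. The argument then has two directions. $(\Leftarrow)$ A vertex $k$ not lying in two $3$-cycles corresponds to a genuine corner or boundary of the path, so $\mu_k$ is realised by an elementary move and keeps us inside $\cZ$; by part~(i) these moves already reach every double zig-zag, so the clusters obtained by mutating only at such vertices are exactly the zig-zag clusters. $(\Rightarrow)$ Conversely, mutating at a vertex in two $3$-cycles amounts to pushing the path through itself: the inserted variable fails to be frieze-adjacent to its neighbours along any admissible continuation, so the resulting seed is not of the form $(\widetilde{x}_i,\widetilde{y}_i)_\zeta$ for any $\zeta\in\cZ$. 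Hence a cluster coincides with some $(\widetilde{x}_i,\widetilde{y}_i)_\zeta$ if and only if it is reached from $(x_i,y_i)$ by mutations avoiding the double-$3$-cycle vertices, which is the claim.
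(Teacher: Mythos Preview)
Your plan is essentially the same as the paper's: define elementary moves on double zig-zags, connect every $\zeta$ to the straight double column by such moves, and identify each elementary move with a single mutation; then for (ii) characterise the admissible mutations via the local shape of the quiver. The paper carries this out by explicitly writing down, for each of the six local continuation fragments, the corresponding piece of the quiver $\Qc_\zeta$, and then observes that an elementary move is the mutation at the unique changed vertex; its argument for (ii) is that the resulting quivers are exactly the graph~\eqref{Graph} with some diagonals inserted, and that mutating at a vertex lying on a triangle destroys a square edge and so takes you outside this class.

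One small point: you frame the ``minus sign'' in $E=AD-BC$ as the main obstacle and propose to resolve it via Lemma~\ref{DetLem} and Proposition~\ref{TabProp}. In fact no determinantal identities are needed here: the $2$-frieze rule already \emph{is} the exchange relation once you rearrange it as $AD=E+BC$. The genuine work, which you also mention, is the bookkeeping that $E$, $B$ and $C$ (with some of them possibly equal to boundary $1$'s) all lie in the current zig-zag; the paper settles this by its explicit case-by-case listing of the local quiver fragments, which is the concrete content you would have to supply.
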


\begin{proof}
(i)
For every double zig-zag $\zeta$,
we define a seed $\Sigma_\zeta=\left((x_i,y_i)_\zeta,\Qc_\zeta\right)$
in the algebra $\A(\Qc)$, where $\Qc_\zeta$ is
the oriented graph associated to $\zeta$ defined
as follows.

The fragments of zig-zags:
$$
\begin{array}{rrr}
&\widetilde{x}_i&\widetilde{y}_i\\[6pt]
\widetilde{x}_{i+1}&\widetilde{y}_{i+1}&
\end{array}
\qquad
\qquad
\begin{array}{ll}
\widetilde{x}_i&\widetilde{y}_i\\[6pt]
\widetilde{y}_{i+1}&\widetilde{x}_{i+1}
\end{array}
\qquad
\qquad
\begin{array}{lll}
\widetilde{x}_i&\widetilde{y}_i&\\[6pt]
&\widetilde{x}_{i+1}&\widetilde{y}_{i+1}
\end{array}
$$
correspond, respectively, to the following subgraphs:
$$
\!\!\!\!\!\!\!\!\!\!\!\!\!\!\!\!\!\!
\xymatrix{
&\widetilde{x}_i\ar@{->}[r]\ar@{<-}[rd]
&\widetilde{x}_{i+1}\ar@{->}[d]
\\
&\widetilde{y}_i\ar@{->}[r]\ar@{<-}[u]
&\widetilde{y}_{i+1}
}
\quad
\xymatrix{
&\widetilde{x}_i\ar@{<-}[r]
&\widetilde{x}_{i+1}\ar@{<-}[d]
\\
&\widetilde{y}_i\ar@{->}[r]\ar@{<-}[u]
&\widetilde{y}_{i+1}
}
\xymatrix{
&\widetilde{x}_i\ar@{<-}[r]
&\widetilde{x}_{i+1}\ar@{->}[d]\ar@{<-}[ld]
\\
&\widetilde{y}_i\ar@{<-}[r]\ar@{<-}[u]
&\widetilde{y}_{i+1}
}
$$
for $i$ even and with reversed orientation for $i$ odd.
Similarly, the fragments
$$
\begin{array}{rrr}
&\widetilde{y}_i&\widetilde{x}_i\\[6pt]
\widetilde{y}_{i+1}&\widetilde{x}_{i+1}&
\end{array}
\qquad
\qquad
\begin{array}{ll}
\widetilde{y}_i&\widetilde{x}_i\\[6pt]
\widetilde{x}_{i+1}&\widetilde{y}_{i+1}
\end{array}
\qquad
\qquad
\begin{array}{lll}
\widetilde{y}_i&\widetilde{x}_i&\\[6pt]
&\widetilde{y}_{i+1}&\widetilde{x}_{i+1}
\end{array}
$$
correspond to
$$
\!\!\!\!\!\!\!\!\!\!\!\!\!\!\!\!\!\!
\xymatrix{
&\widetilde{x}_i\ar@{->}[r]
&\widetilde{x}_{i+1}\ar@{<-}[d]\ar@{->}[ld]
\\
&\widetilde{y}_i\ar@{->}[r]\ar@{->}[u]
&\widetilde{y}_{i+1}
}
\quad
\xymatrix{
&\widetilde{x}_i\ar@{->}[r]
&\widetilde{x}_{i+1}\ar@{->}[d]
\\
&\widetilde{y}_i\ar@{<-}[r]\ar@{->}[u]
&\widetilde{y}_{i+1}
}
\quad
\xymatrix{
&\widetilde{x}_i\ar@{<-}[r]\ar@{->}[rd]
&\widetilde{x}_{i+1}\ar@{<-}[d]
\\
&\widetilde{y}_i\ar@{<-}[r]\ar@{->}[u]
&\widetilde{y}_{i+1}
}
$$
for $i$ even and with reversed orientation for $i$ odd.
Applying this recurrent procedure,
one defines an oriented graph $\Qc_\zeta$.

For every double zig-zag $\zeta$, there is a series of zig-zags
$\zeta_1,\zeta_2,\ldots,\zeta_k$ such that $\zeta_i$ and~$\zeta_{i+1}$
differ in only one place, say $\widetilde{x}_{\ell_i},\widetilde{y}_{\ell_i}$,
and such that $\zeta_k$ is the double column \eqref{ClustF}.
It is easy to check that
every ``elementary move'' $\zeta_i\to\zeta_{i+1}$ is obtained by
a mutation of coordinates $(\widetilde{x}_i, \widetilde{y}_i)_{\zeta_i}$,
while the corresponding graph $\Qc_{\zeta_{i+1}}$ is a mutation of $\Qc_{\zeta_i}$.

(ii)
Every graph $\Qc_\zeta$ that we construct in the seeds corresponding to
zig-zag coordinates is of the form
\begin{equation}
\label{Qz}
\xymatrix{
&1\ar@{->}[r]
&2\ar@{<-}[r]\ar@{->}[d]
&3\ar@{->}[r]\ar@{<-}[ld]\ar@{<-}[rd]
&\cdots
&\cdots \ar@{<-}[r]
&n-5\ar@{->}[r]
&n-4\ar@{->}[d]\\
&
n-3\ar@{<-}[r]\ar@{->}[u]
&n-2\ar@{<-}[r]
&n-1\ar@{<-}[u]\ar@{->}[r]
&\cdots
&\cdots \ar@{->}[r]
&2n-9\ar@{->}[u]\ar@{<-}[r]
&2n-8\\
}
\end{equation}
That is, $\Qc_\zeta$ is the initial graph \eqref{Graph} with some
diagonals added (such that the triangles and empty squares
are cyclically oriented).
Conversely, from every such graph, one immediately constructs a double zig-zag.

A graph of the form \eqref{Qz} can be obtained from the initial graph $\Qc$ by
a series of mutations at the vertices that do not  belong to triangles.
Conversely, a mutation at a vertex on a triangle changes the nature of the graph
(it removes sides of squares).
\end{proof}

\begin{ex}
Consider a double-diagonal, it can be ``redressed'' to a double-column by a series of elementary moves,
for instance in the case $n=7$,
$$
\begin{array}{rrrr}
&&\widetilde{y}_1&\widetilde{x}_1\\[4pt]
&\widetilde{y}_2&\widetilde{x}_2&\\[4pt]
\widetilde{y}_3&\widetilde{x}_3&&
\end{array}
\quad
\rightarrow
\quad
\begin{array}{rll}
&{\widetilde{x}_1}^\prime&\widetilde{y}_1\\[4pt]
&\widetilde{y}_2&\widetilde{x}_2\\[4pt]
\widetilde{y}_3&\widetilde{x}_3&
\end{array}
\quad
\rightarrow
\quad
\begin{array}{ll}
{\widetilde{x}_1}^\prime&\widetilde{y}_1\\[4pt]
\widetilde{y}_2&\widetilde{x}_2\\[4pt]
\widetilde{x}_3&\widetilde{y}_3^\prime
\end{array}
$$
The corresponding graphs are:
$$
\!\!\!\!\!\!\!\!\!\!\!\!\!\!\!\!\!\!
\xymatrix{
&1\ar@{<-}[r]
&2\ar@{->}[d]\ar@{<-}[r]\ar@{<-}[ld]
&3\ar@{->}[d]\ar@{<-}[ld]&&
\\
&4\ar@{<-}[r]\ar@{<-}[u]
&5\ar@{<-}[r]
&6&\ar@<20pt>@{->}[r]^{\mu_1}&
}
\quad
\xymatrix{
1\ar@{->}[r]
&2\ar@{->}[d]\ar@{<-}[r]
&3\ar@{->}[d]\ar@{<-}[ld]&&
\\
4\ar@{<-}[r]\ar@{->}[u]
&5\ar@{<-}[r]
&6&\ar@<20pt>@{->}[r]^{\mu_6}&
}
\quad
\xymatrix{
1\ar@{->}[r]
&2\ar@{->}[d]\ar@{<-}[r]
&3\ar@{<-}[d]
\\
4\ar@{<-}[r]\ar@{->}[u]
&5\ar@{->}[r]
&6
}
$$
\end{ex}

We are ready to prove Theorem \ref{MainSecond}.

Part (i). It follows from the Laurent phenomenon for cluster
algebras \cite{FZ2} that all the entries of the frieze (\ref{DobZag})
are Laurent polynomials in any zig-zag coordinates $(\widetilde{x}_i, \widetilde{y}_i)_\zeta$.
Therefore, for every double zig-zag $\zeta$, we obtain a 
well-defined map from the complex torus~$(\C^*)^{2n-8}$
to the open dense subset of $\F_n$ consisting of 2-friezes with non-vanishing entries
on $\zeta$.
Hence the result.

Part (ii).
Assume that the coordinates $(\widetilde{x}_i, \widetilde{y}_i)_\zeta$ are positive real numbers.
It then follows from the 2-frieze rule that all the entries of the frieze are positive.
Therefore, every system of coordinates $(\widetilde{x}_i, \widetilde{y}_i)_\zeta$
identifies the subspace $\F^0_n$ with $\R_{>0}^{2n-8}$.

Theorem \ref{MainSecond} is proved.

Let us also prove Proposition \ref{Boundme}.
Consider an arbitrary double zig-zag $\zeta$.
The set of coordinates 
$(\widetilde{x}_1,\ldots, \widetilde{x}_{n-4},\widetilde{y}_1,\ldots, \widetilde{y}_{n-4})_\zeta$
forms a cluster. 
Proposition \ref{Boundme} then follows from the Laurent phenomenon.

\subsection{The cluster manifold of closed 2-friezes}\label{NoLabel}
The two systems of coordinates 
$(\widetilde{x}_i, \widetilde{y}_i)_\zeta$ and $(\widetilde{x}_i, \widetilde{y}_i)_{\zeta'}$,
where $\zeta$ and $\zeta'$ are two double zig-zags,
can be reached from one another
by a series of mutations.

Consider all the coordinate systems corresponding to different double zig-zags.
We call the \textit{cluster manifold of 2-friezes} the smooth analytic (complex) manifold
obtained by gluing together the complex tori $(\C^*)^{2n-8}$
via the consecutive mutations.

The cluster manifold of 2-friezes is not the
entire algebraic variety $\F_n$.
Indeed, the smooth cluster manifold of 2-friezes consists of the 2-friezes that have at least
one double zig-zag with non-zero entries.
However, the full space $\F_n$ also contains singular points.

To give an example, consider $n=km$ with $k,m\geq3$
and take an $n$-gon obtained as an $m$-gon traversed $k$ times.
The corresponding closed 2-frieze pattern (of width $n-4$) contains
double rows of zeroes
(this readily follows from formula \eqref{MalDetEq}).
This 2-frieze pattern does not belong to the smooth cluster manifold of 2-friezes.

\begin{rem}
In the cases $n=6,7,8$, Scott \cite{Sco} proved that the cluster algebra built out of the graph \eqref{Graph} 
is isomorphic to the coordinate ring of the Grassmannian $Gr(3,n)$.
In these algebras, Pl\"ucker coordinates form a proper subset of the set of cluster variables
(14  Pl\"ucker coordinates among 16 cluster variables for $n=6$, 
28 among 42 for $n=7$ and 48 among 128 for $n=8$).
It can be checked that the $n(n-4)$ cluster variables arising in the 2-frieze of width $n-4$
are also Pl\"ucker coordinates.
However, we do not know if there is a nice way to characterize
them using Scott's approach
(in terms of Postnikov arrangement or root correspondence). 
\end{rem}

\subsection{The symplectic structure}\label{Symplect}

The complete integrability of the pentagram map \cite{OST} was deduced from the existence of an
invariant Poisson structure on the space of twisted $n$-gons.
In general, completely integrable dynamics is
usually associated with invariant symplectic or Poisson structure - thus our interest in
this question.

Every cluster manifold has a canonical (pre)symplectic form,
i.e., a closed differential 2-form, see \cite{GSV}.
Let us recall here the general definition.
For an arbitrary seed $\Sigma=\left(\{t_1, \ldots, t_N\} , \;\Rc\right)$
on a cluster manifold, the 2-form is as follows:
\begin{equation}
\label{GenSym}
\omega=
\sum\limits_{\substack{\text{arrows in }\Rc\\ i\rightarrow j }}
\frac{dt_i}{t_i}\wedge\frac{dt_j}{t_j}.
\end{equation}
It is then easy to check that the 2-form $\omega$ is well-defined,
that is, does not change under mutations.
The 2-form $\omega$ is obviously closed (since it is constant in the
coordinates $\log{}t_i$).
However, this form is not always symplectic and may be degenerate.

One of the consequence of the defined cluster structure
 is the existence of such a form on the cluster manifold of 2-friezes.
 It turns out that this form is \textit{non-degenerate},
 for $n\not=3m$.
 
 \begin{prop}
 \label{SymplProp}
 (i)
 The differential $2$-form \eqref{GenSym} on $\F_n$ is
 symplectic if and only if $n\not=3m$.
 
 (ii) If $n=3m$, then the form \eqref{GenSym} is a presymplectic form of corank $2$.
 \end{prop}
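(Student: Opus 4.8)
The plan is to reduce the statement to a purely linear-algebra computation about the exchange matrix of $\Qc$. By \eqref{GenSym}, in the initial seed the form $\omega$ is, in the logarithmic coordinates $u_i=\log t_i$, the constant $2$-form whose Gram matrix is the skew-symmetric exchange matrix $B=B(\Qc)$; that is, $\omega=\sum_{i<j}b_{ij}\,du_i\wedge du_j$, where $b_{ij}$ is the signed number of arrows $i\to j$. A constant $2$-form of this kind is symplectic precisely when $B$ is invertible, and its corank equals $\dim\ker B$. Since matrix mutation preserves the rank of $B$ (it acts by $B\mapsto E^{\mathsf T}BE$ with $E$ unimodular) and $\omega$ is mutation-invariant, it suffices to compute $\dim\ker B$ for the initial graph \eqref{Graph}, which by Theorem \ref{MainSecond}(i) controls $\omega$ on a Zariski-dense chart of $\F_n$. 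Thus both parts reduce to showing this kernel has dimension $0$ when $3\nmid n$ and dimension $2$ when $3\mid n$.

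First I would exploit the ladder shape of $\Qc$. Ordering the $2n-8$ vertices by columns $(a_k,b_k)$, $k=1,\dots,m$ with $m=n-4$, the matrix $B$ is block-tridiagonal with $2\times2$ blocks: the diagonal blocks $D_k=(-1)^kJ$ record the vertical edges and the off-diagonal blocks $E_k=(-1)^{k-1}K$ the horizontal edges, where $J=\left(\begin{smallmatrix}0&1\\-1&0\end{smallmatrix}\right)$ and $K=\left(\begin{smallmatrix}1&0\\0&-1\end{smallmatrix}\right)$ (the alternating signs encode the cyclic orientation of the elementary squares). Writing a kernel vector as $v=(v_1,\dots,v_m)$ with $v_k\in\C^2$, the block rows of $Bv=0$ become, using $K^2=\Id$ and $KJ=S$, the second-order recurrence
\[
v_{k+1}=S\,v_k-v_{k-1},\qquad S=\begin{pmatrix}0&1\\1&0\end{pmatrix},
\]
together with the boundary conditions $v_0=0$ and $v_{m+1}=0$. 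Hence $v$ is determined by $v_1$ via $v_{m+1}=W_{m+1}v_1$, where $W_k$ is the matrix solution with $W_0=0$, $W_1=\Id$, and $\dim\ker B=\dim\ker W_{m+1}$.

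Finally I would diagonalize $S$, whose eigenvalues are $\pm1$. On the $(+1)$-eigenline the recurrence is $w_{k+1}=w_k-w_{k-1}$, with characteristic roots the primitive sixth roots of unity, so $w_{m+1}=\sin\!\big((m+1)\pi/3\big)/\sin(\pi/3)$; on the $(-1)$-eigenline it is $w_{k+1}=-w_k-w_{k-1}$, with characteristic roots the primitive cube roots of unity, so $w_{m+1}=\sin\!\big(2(m+1)\pi/3\big)/\sin(2\pi/3)$. Both scalars vanish under the single condition $3\mid(m+1)$ and are otherwise nonzero. Since $m=n-4$, this condition is exactly $3\mid n$. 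Therefore $W_{m+1}$ is invertible when $3\nmid n$, giving $\ker B=0$ and $\omega$ symplectic, which is part (i); and $W_{m+1}=0$ when $3\mid n$, giving $\dim\ker B=2$ (it cannot exceed $2$, being the kernel of a $2\times2$ matrix), which is part (ii).

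The one genuinely delicate point is the sign bookkeeping, in particular the orientation of the last elementary square, which the statement reverses for odd $n$. I expect this to be the main obstacle only superficially: the reversal affects solely the final boundary block, and the same Chebyshev computation goes through with the identical divisibility condition; alternatively one invokes the mutation-invariance of the rank to pass between the two orientation conventions. It is worth recording that the resulting $3\mid n$ dichotomy is precisely the one separating the cases where $\F_n\cong\tilde\cC_n$ does and does not descend to $\cC_n$, so the degeneration of $\omega$ matches the geometric degeneration discussed earlier.
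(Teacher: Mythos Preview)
Your argument is correct, and it takes a genuinely different route from the paper's own proof. The paper argues by induction with step $3$: after explicit row and column operations it shows that the incidence matrix $\omega(n)$ is non-degenerate iff $\omega(n-3)$ is, then checks the base cases $n=5,6,7$ by hand; for part (ii) it exhibits a non-vanishing $(2n-10)\times(2n-10)$ minor. Your approach instead exploits the block-tridiagonal ladder structure to convert $Bv=0$ into the $2\times 2$ linear recurrence $v_{k+1}=Sv_k-v_{k-1}$ with $S=\left(\begin{smallmatrix}0&1\\1&0\end{smallmatrix}\right)$, and then diagonalizes $S$ to reduce to two scalar Chebyshev-type recurrences whose vanishing is governed by the single condition $3\mid(m+1)$. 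This is more structural and yields the corank directly as $\dim\ker W_{m+1}$, without separate minor computations; the paper's row-reduction is more elementary but less transparent about \emph{why} the period is $3$.

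One clarification on the point you flag as delicate. The ``opposite orientation of the last square for $n$ odd'' is not a perturbation of your setup but is exactly what makes your alternating formulas $D_k=(-1)^kJ$, $E_k=(-1)^{k-1}K$ hold for \emph{all} $k$. Indeed, for $n$ odd one has $m=n-4$ odd, and the reversal of the last square relative to the drawn (even-$n$) picture produces precisely $D_m=-J$ and $E_{m-1}=-K$, matching $(-1)^mJ$ and $(-1)^{m-1}K$. So no separate boundary analysis or appeal to mutation-invariance of the rank is needed: your recurrence with $v_0=v_{m+1}=0$ is valid uniformly, and the Chebyshev computation finishes both parts at once.
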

 
 \begin{proof}
 It suffices to check the statement for the initial seed
 $\Sigma_0=(\{x_1,\ldots, x_{n-4},y_1,\ldots, y_{n-4}\}, \Qc)$,
 where $\Qc$ is the graph \eqref{Graph}.
 The form \eqref{GenSym} then corresponds to the following skew-symmetric
 $(2n-8)\times(2n-8)$-matrix
 $$
 \omega(n)=
 \left(
 \begin{array}{rrrrrrrr}
 0&1&0&\;\;0&\cdots&0&0&-1\\[4pt]
 -1&0&-1&0&\cdots&0&1&0\\[4pt]
 0&1&0&1&\cdots&-1&0&0\\[4pt]
 &&&&\ddots&&&\\[4pt]
 0&0&1&0&\cdots&0&-1&0\\[4pt]
  0&-1&0&0&\cdots&1&0&1\\[4pt]
 1&0&0&0&\cdots&0&-1&0
 \end{array}
 \right)
 $$
 which is nothing other than the incidence matrix of the graph \eqref{Graph}
 (for technical reasons we inverse the labeling of the second line).
We need to check that this matrix is non-degenerate if and only if $n\not=3m$
 and has corank 2 otherwise.
 
 (i) We proceed by induction on $n$.
 First, one easily checks the statement for small $n$.
 Indeed, for $n=5$, $n=6$ and $n=7$, the matrix $\omega(n)$ is as follows:
$$
\left(
 \begin{array}{rr}
 0&1\\[4pt]
 -1&0
  \end{array}
 \right)
 \quad , 
 \quad
\left(
 \begin{array}{rrrr}
 0&1&0&-1\\[4pt]
 -1&0&1&0\\[4pt]
 0&-1&0&1\\[4pt]
 1&0&-1&0
 \end{array}
 \right)
 \quad\hbox{and}\quad
 \left(
 \begin{array}{rrrrrr}
 0&1&0&0&0&-1\\[4pt]
 -1&0&-1&0&1&0\\[4pt]
 0&1&0&-1&0&0\\[4pt]
 0&0&1&0&-1&0\\[4pt]
  0&-1&0&1&0&1\\[4pt]
 1&0&0&0&-1&0
 \end{array}
 \right)
 $$
respectively.
 
 Next, the matrix $\omega(n)$ is non-degenerate if and only if
 $\omega(n-3)$ is non-degenerate.
 Indeed, denote by $N=2n-8$ the size of the matrix $\om(n)$.
 Add the columns $(N-2)$ and $N$ to the column 2, and add rows $(N-2)$ and $N$ to row 2. 
 One obtains
  $$
 \left(
 \begin{array}{rrr|ccc|rrr}
 0&0&0& &&  &0 &0&-1\\[4pt]
 0&0&0& && -1 &0 &-1&0\\[4pt]
 0&0&0&1&& &-1&0&0\\[4pt]
 \hline
    &&-1&  && & && \\[4pt]
    &&    & &-\omega(n-3)&&&&\\[4pt]
     &1&   &  && &1 && \\[4pt] 
 \hline
 0&0&1& && -1 &0&-1&0\\[4pt]
 0&1&0&&&&1&0&1\\[4pt]
 1&0&0&&&&0&-1&0
 \end{array}
 \right)
 $$
Then, one can subtract column 2 from column $N-2$, add column 1 to column $N-1$ and do similar operations on the rows. 
This leads to a block of zeroes in the right down corner. 
Then one can easily remove the extra $\pm1$'s to finally obtain
 $$
 \left(
 \begin{array}{rcr|ccc|rcr}
 0&0&0& &&  &0 &0&-1\\[4pt]
 0&0&0& &(0)& &0 &-1&0\\[4pt]
 0&0&0&&& &-1&0&0\\[4pt]
 \hline
    &&&  && & && \\[4pt]
    &(0)&    & &-\omega(n-3)&&&(0)&\\[4pt]
     &&   &  && & && \\[4pt] 
 \hline
 0&0&1& && &0&0&0\\[4pt]
 0&1&0&&(0)&&0&0&0\\[4pt]
 1&0&0&&&&0&0&0
 \end{array}
 \right)
 $$

The result follows.
 
 (ii) Let now $n=3m$.
 The  $(2n-10)\times(2n-10)$-minor:
 $$
 \left(\omega(n)_{ij}\right),
 \qquad
 2\leq{}i,j\leq2n-9
 $$
coincides with the matrix $-\omega(n-1)$ which is non-degenerate as already proved in Part (i).
 Therefore, the matrix $\omega(n)$ is, indeed, of corank~2.
 \end{proof}

\begin{rem}
In the case $n=3m$, one can explicitly find a linear combination of the rows
 of~$\omega(n)$ that vanishes:
 $$
 \sum_{0\leq{}i<[m/2]}\left(\ell_{6i+1}+\ell_{N-6i-1}\right)-
 \sum_{1\leq{}i<[m/2]}\left(\ell_{6i-1}+\ell_{N-6i+3}\right),
 $$
 where $N=2n-8$,
 so that $\omega(n)$ is, indeed, degenerate.
 \end{rem}
 
\section{Arithmetic 2-friezes}\label{Trick}

We consider now closed numerical 2-friezes whose
 entries are positive integers, that is, arithmetic 2-friezes.
The problem of classification  of such 2-frieze patterns was formulated in~\cite{Pro}
and interpreted as a generalization of the Catalan numbers.
The problem remains open.

In this section, we describe a stabilization process that is a  step
toward solution of this problem.
It is natural to consider a 2-frieze pattern that can be obtained by
stabilization as ``trivial''.
We thus formulate a problem of classification
of those patterns that cannot be obtained this way. Likewise, it is natural to call a 2-frieze pattern {\it prime} if it is not the connected sum of non-trivial 2-frieze patterns. The classification of prime arithmetic 2-friezes is also a challenging problem.

It was shown in \cite{CoCo} that every (classical Coxeter-Conway) arithmetic frieze pattern
contains~$1$ in the first non-trivial row and
can be obtained by a simple procedure from a pattern of lower width.
This provides a complete classification of Coxeter-Conway.
Our stabilization is quite similar to the classical Coxeter-Conway stabilization.
However, unlike the classical case, classification of 2-frieze patterns does not
reduce to stabilization (cf. for instance Example \ref{NonStab}).

We start this section with the simplest examples.

\subsection{Arithmetic 2-friezes for $n=4,5$}

The case $n=4$ is the first case where the notion
of 2-frieze pattern makes sense.
The unique $8$-periodic pattern is the following one
\begin{equation}
\label{Triv}
\begin{array}{cccccccccc}
\cdots&1&1&1&1&1& 1&1&1&\cdots\\[2pt]
\cdots&1&1&1&1&1&1& 1&1&\cdots
\end{array}
\end{equation}
which is the most elementary 2-frieze pattern.

If $n=5$, the answer is as follows.

\begin{prop}
\label{n=5Prop}
The $2$-frieze pattern
\begin{equation}
\label{OldFriend}
 \begin{array}{cccccccccccc}
\cdots&1&1&1&1&1& 1&1&1&1&1&\cdots\\[4pt]
\cdots&1&1&2&3&2&1&1&2&3&2&\cdots\\[4pt]
\cdots&1&1&1&1&1&1& 1&1&1&1&\cdots
\end{array}
\end{equation}
is the unique arithmetic $2$-frieze pattern of width 1.
\end{prop}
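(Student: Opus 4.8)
The plan is to exploit the glide symmetry of Proposition \ref{Period}(iii) together with the 2-frieze rule to show that a width-1 arithmetic pattern is completely rigid, forcing the single nontrivial row to be exactly the sequence $1,1,2,3,2$ up to cyclic shift. For a width-1 closed pattern, there is just one nontrivial row, sandwiched between two rows of 1's. Writing the entries of that row as an $n=5$ sequence $b_1,a_1,b_2,a_2,b_3$ (indices cyclic, $2n=10$-periodic by Proposition \ref{Period}(i)), I would first read off the 2-frieze rule for an entry in the bottom row of 1's: each such $1$ is the $2\times 2$ determinant of its four neighbours, two of which lie in the nontrivial row and two in the bottom row (which are $1$'s). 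This yields a recurrence relating three consecutive entries of the nontrivial row, of exactly the form \eqref{recur} specialized to width~1.

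The key step is to extract strong arithmetic constraints. For the top row of 1's bounding the pattern, the rule forces, for each $i$, a relation of the shape $a_i\, b_{i+1} = (\text{product of the two 1's above}) + \cdots$, which in the width-1 case collapses to small-integer equations among consecutive $a_i, b_i$. Concretely, since the pattern has width 1, the entry directly below a $1$ in the top row and directly above a $1$ in the bottom row is surrounded so tightly that the Dodgson relations become equations like $a_i a_{i+1} - b_{i+1} = 1$ (an entry in a row of 1's equals the determinant of its neighbours). Combined with positivity ($a_i, b_i \geq 1$ since all entries are positive integers), these equations admit only very restricted solutions. I would argue that not all $b_i$ can exceed $1$, locate a position where $b_i = 1$, and then propagate the recurrence: once two consecutive small values are pinned down, the relation $v_{i} = a_i v_{i-1} - b_i v_{i-2} + v_{i-3}$ forces every remaining entry, and the closure condition ($M = \Id$, by Theorem \ref{MainOne}) together with $5$-periodicity leaves only the cyclic orbit of $1,1,2,3,2$.

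I expect the main obstacle to be the bookkeeping needed to rule out \emph{all} competing positive-integer solutions rather than just exhibiting \eqref{OldFriend}: one must show uniqueness up to the cyclic and glide symmetries, not merely existence. The cleanest route is probably to use the glide symmetry $v_{i,j} = v_{j+n-\frac52,\,i+\frac52}$ to halve the independent data, then observe that positivity forces at least one entry equal to $1$ (a minimality argument on the smallest entry in the row, using that a $1$ in a bounding row is a determinant of larger neighbours), and finally close the loop by the periodicity constraint from Theorem \ref{MainOne}. An alternative, perhaps shorter, finish is to invoke the geometric realization: width-1 corresponds to $n=5$, where $\F_5 \cong \cC_5$ is the moduli space of pentagons, and the unique \emph{arithmetic} (positive-integer) point should correspond to the regular/most symmetric pentagon; but I would keep the elementary combinatorial argument as the primary proof since it is self-contained.
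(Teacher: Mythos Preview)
Your approach is workable and genuinely different from the paper's. The paper does not use the local $2$-frieze rule directly; instead it quotes the explicit cluster formulas from Example~\ref{ClustFive}, namely $b_1=(a_0+1)/b_0$ and $b_2=(b_0+1)/a_0$, imposes integrality to obtain $b_0+1=k\,a_0$ and $a_0+1=\ell\,b_0$ for some $k,\ell\in\Z_{>0}$, and solves $a_0=(\ell+1)/(k\ell-1)$ to bound all entries by~$3$, finishing with a short exhaustive check. Your route---extract the single relation $c_i=c_{i-1}c_{i+1}-1$ for the cyclic sequence of five entries, take the minimum $c_m$, and note $c_m\ge c_m^2-1$ forces $c_m=1$, then propagate---is more elementary and self-contained; the paper's Diophantine method is a bit heavier here but scales better (it is the template for the $n=6$ proof in Proposition~\ref{n=6Prop}).

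One correction to your setup: the $2$-frieze rule applied to a~$1$ in either bounding row is \emph{vacuous}, because one of its four neighbours lies in the adjacent row of~$0$'s, giving $1=1\cdot 1-0\cdot(\ast)$. The relation $a_ia_{i+1}-b_{i+1}=1$ that you want is instead the rule applied to the entry $b_{i+1}$ \emph{in the nontrivial row} (left and right neighbours $a_i,a_{i+1}$; top and bottom neighbours both~$1$). Likewise, the third-order recurrence~\eqref{recur} governs the solution vectors $V_i$, not the row entries themselves, so invoking it for ``three consecutive entries of the nontrivial row'' is a conflation; you do not need it once you have $c_i=c_{i-1}c_{i+1}-1$. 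With those fixes your minimality-and-propagation argument goes through cleanly; the geometric ``regular pentagon'' remark at the end is suggestive but not a proof and should be dropped.
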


\begin{proof}
According to Proposition \ref{Period}, Parts (ii), (iii),
an integral $2$-frieze pattern of width 1 is of the form
$$
\begin{array}{rrrrrrrrrrrrrrrr}
\cdots&1&1&1&1&1&1&1& \cdots\\
\cdots&b_0&a_0&b_1&a_1&b_2&b_0&a_0&\cdots\\
\cdots&1&1&1&1&1&1&1& \cdots
\end{array}
$$
Let us show that every number $\{b_0,a_0,b_1,a_1,b_2\}$
is less than or equal to 3.

One has from \eqref{ClustFive}:
$$
b_1=\frac{a_0+1}{b_0},
\qquad
b_2=\frac{b_0+1}{a_0}.
$$
Therefore, there exist positive integers $k,\ell$ such that
$b_0+1=k\,a_0$ and $a_0+1=\ell\,b_0$.
Hence
\begin{equation}
\label{a0}
a_0=\frac{\ell+1}{k\,\ell-1}.
\end{equation}
Assume $a_0>3$, then $\ell\,(3k-1)<4$.
Since $k,\ell$ are positive integers, the only possibility is
$k=\ell=1$.
This contradicts \eqref{a0}.

Once one knows that the entries do not exceed 3, the proof is completed by a brief exhaustive search.
\end{proof}

\subsection{Arithmetic 2-friezes for $n=6$}\label{n=6List}

The classification in this case is as follows.

\begin{prop}
\label{n=6Prop}
The following 5 patterns:
\begin{equation}
\label{SixOne}
\begin{array}{rrrrrrrrrrrrr}
1&1&1&1&1&1&1&1&1&1&1&1\\
2&2&2&2&2&2&2&2&2&2&2&2\\
2&2&2&2&2&2&2&2&2&2&2&2\\
1&1&1&1&1&1&1&1&1&1&1&1
\end{array}
\end{equation}
\begin{equation}
\label{SixTwo}
\begin{array}{rrrrrrrrrrrrr}
1&1&1&1&1&1&1&1&1&1&1&1\\
1&3&5&2&1&3&5&2&1&3&5&2\\
5&2&1&3&5&2&1&3&5&2&1&3\\
1&1&1&1&1&1&1&1&1&1&1&1
\end{array}
\end{equation}
\begin{equation}
\label{SixThree}
\begin{array}{rrrrrrrrrrrrr}
1&1&1&1&1&1&1&1&1&1&1&1\\
1&1&2&4&4&2&1&1&2&4&4&2\\
1&1&2&4&4&2&1&1&2&4&4&2\\
1&1&1&1&1&1&1&1&1&1&1&1
\end{array}
\end{equation}
\begin{equation}
\label{SixFour}
\begin{array}{rrrrrrrrrrrrr}
1&1&1&1&1&1&1&1&1&1&1&1\\
1&1&3&6&3&1&1&2&3&3&3&2\\
1&2&3&3&3&2&1&1&3&6&3&1\\
1&1&1&1&1&1&1&1&1&1&1&1
\end{array}
\end{equation}
\begin{equation}
\label{SixFive}
\begin{array}{rrrrrrrrrrrrr}
1&1&1&1&1&1&1&1&1&1&1&1\\
1&1&4&6&2&1  & 2&3&2&2&4&3 \\
2&3&2&2&4&3 &1&1&4&6&2&1\\
1&1&1&1&1&1&1&1&1&1&1&1
\end{array}
\end{equation}
is the complete (modulo dihedral symmetry)
list of $12$-periodic arithmetic $2$-frieze patterns.
\end{prop}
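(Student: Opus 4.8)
The plan is to reduce the classification to a finite Diophantine search using the explicit cluster parametrization of $\F_6$, and then to remove the redundancy coming from the dihedral symmetry. By Proposition \ref{Period} a closed $2$-frieze of width $2$ is $12$-periodic in each row and is completely determined by its two entries in two adjacent columns; hence it is encoded by the four numbers $x_1,y_1,x_2,y_2$ placed as in Example \ref{ClustSix}. For a closed frieze these four numbers are free (they are cluster coordinates on the $4$-dimensional variety $\F_6$), and every other entry is one of the explicit rational functions displayed in Example \ref{ClustSix}. Thus an arithmetic frieze is precisely a choice of $x_1,y_1,x_2,y_2\in\Z_{>0}$ for which all of these rational functions take positive integer values; by Proposition \ref{PosConvPro} such patterns are exactly the convex (integral) ones, but the counting below is purely arithmetic.

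Next I would read off the integrality constraints. The entries with a single variable in the denominator,
\[
\frac{x_2+y_1}{x_1},\qquad \frac{x_2+y_1}{y_2},\qquad \frac{x_1+y_2}{x_2},\qquad \frac{x_1+y_2}{y_1},
\]
give the four divisibility relations $x_1\mid x_2+y_1$, $y_2\mid x_2+y_1$, $x_2\mid x_1+y_2$, $y_1\mid x_1+y_2$. The four "middle" entries are each of the form $(x_1+y_2)(x_2+y_1)$ divided by the product of three of the four variables, so their integrality is equivalent to
\[
\operatorname{lcm}\bigl(x_1y_1y_2,\;x_2y_1y_2,\;x_1x_2y_1,\;x_1x_2y_2\bigr)\ \text{ divides }\ (x_1+y_2)(x_2+y_1).
\]

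The heart of the argument, and the step I expect to be the main obstacle, is to bound the four variables. I would let $M$ be the largest of $x_1,y_1,x_2,y_2$. Each of $x_2+y_1$ and $x_1+y_2$ is at most $2M$, is strictly positive, and is divisible by $M$ (since $M$ is one of the two variables occurring in it, by the relations above); hence each of these sums equals $M$ or $2M$. The value $2M$ forces two variables to coincide with $M$, while the value $M$ expresses $M$ as a sum of two smaller positive integers, in either case collapsing the problem to a handful of sub-cases with strictly fewer large parameters. Feeding these sub-cases into the middle-entry condition (the cubic denominators must divide the quadratic numerator) eliminates the remaining freedom and confines all four variables to a small range, exactly as the inequalities $b_0+1=k\,a_0$, $a_0+1=\ell\,b_0$ did in the proof of Proposition \ref{n=5Prop}. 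The delicate part is organizing this case analysis so that no admissible quadruple is missed.

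Finally I would run the finite search over the admissible quadruples and organize the solutions under the dihedral action: cyclic rotation corresponds to shifting the row, and reflection to the glide symmetry of Proposition \ref{Period}(iii) composed with a reversal. Each orbit is represented by exactly one of the five patterns \eqref{SixOne}--\eqref{SixFive}, and conversely a direct check confirms that each of these five is a genuine closed arithmetic $2$-frieze. The only remaining care is bookkeeping: verifying that distinct quadruples producing the same frieze are all accounted for by the symmetry group, so that the five listed patterns are pairwise inequivalent and exhaust the classification.
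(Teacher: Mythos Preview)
Your overall plan --- parametrize by the four cluster coordinates of Example \ref{ClustSix}, read off divisibility conditions, bound the variables, and do a finite search --- is the same as the paper's, but the bounding step you propose has a genuine gap. The relations you correctly list say that $x_1$ and $y_2$ divide $x_2+y_1$, while $x_2$ and $y_1$ divide $x_1+y_2$. So if $M=\max(x_1,y_1,x_2,y_2)$, then $M$ divides the sum in which it does \emph{not} occur, and that one sum is indeed $M$ or $2M$; but nothing forces $M$ to divide the other sum. For the frieze \eqref{SixTwo}, one choice of columns gives $(x_1,y_1,x_2,y_2)=(1,3,2,5)$, so $M=5$, $x_2+y_1=5=M$, yet $x_1+y_2=6\notin\{M,2M\}$. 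Hence the assertion ``each of these sums equals $M$ or $2M$'' is false, and the promised collapse to ``a handful of sub-cases with strictly fewer large parameters'' does not go through as stated.

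The paper gets a clean bound by a different substitution. Writing $a=\gcd(x_1,y_2)$, $b=\gcd(x_2,y_1)$ and $x_1=a\bar x_1$, $y_2=a\bar y_2$, $x_2=b\bar x_2$, $y_1=b\bar y_1$ with coprime pairs $(\bar x_1,\bar y_2)$ and $(\bar x_2,\bar y_1)$, the third-column divisibilities give $x_2+y_1=Aa\,\bar x_1\bar y_2$ and $x_1+y_2=Bb\,\bar x_2\bar y_1$ for some $A,B\in\Z_{>0}$. Multiplying and cancelling $ab$ yields the single Diophantine equation
\[
AB=\Bigl(\frac{1}{\bar x_2}+\frac{1}{\bar y_1}\Bigr)\Bigl(\frac{1}{\bar x_1}+\frac{1}{\bar y_2}\Bigr),
\]
which is immediately bounded (each factor is at most $2$, and at least one denominator in each factor must equal $1$) and solved by a short case analysis; the fourth and fifth columns then pin down $a$ and $b$. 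Your route can probably be salvaged by a more careful branching after the single valid ``$M$ or $2M$'' conclusion, but it is the gcd trick that makes the search both finite and short in the paper's argument.
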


\begin{proof}
We sketch an elementary, albeit somewhat tedious, proof. 

Let $a$ be the greatest common divisor of $x_1, y_2$, and $b$ that of $x_2, y_1$. Then 
$$
x_1=a \bar x_1,
\quad  y_2=a \bar y_2,
\quad x_2=b \bar x_2,
\quad y_1=b \bar y_1
$$
where the pairs $\bar x_1, \bar y_2$ and $\bar x_2, \bar y_1$ are coprime. 
Set: $p=x_1+y_2,\, q=x_2+y_1$.

Consider Example \ref{ClustSix}. From the third column we see that $q=kx_1=\ell{}y_2$ 
for some $k, \ell\in \Z_+$. 
Hence $k=A \bar y_2,\, \ell=A \bar x_1$ for $A\in \Z_+$, and $q=Aa\bar x_1 \bar y_2$. 
Likewise, $p=Bb\bar x_2 \bar y_1$. Thus
$$
a(\bar x_1+\bar y_2)=Bb\,\bar x_2 \bar y_1,
\qquad
b(\bar x_2+\bar y_1)=Aa\,\bar x_1 \bar y_2.
$$
Multiply these two equations, cancel $ab$, and rewrite in an equivalent form:
\begin{equation}
\label{MasterEq}
AB=\left(\frac{1}{\bar x_2} + \frac{1}{\bar y_1}\right) \left(\frac{1}{\bar x_1} + \frac{1}{\bar y_2}\right). 
\end{equation}
This Diophantine equation has just a few solutions, and this leads to the desired classification. 

Before we list the solutions of (\ref{MasterEq}), let us remark that the 4th and 5th columns of the 2-frieze in Example \ref{ClustSix} consist of the following integers:
$$
\frac{AB\bar x_2}{a},
\quad
\frac{AB\bar y_1}{a},
\quad
\frac{AB\bar x_1}{b},
\quad
\frac{AB\bar y_2}{b}.
$$
Therefore, once $A,B, \bar x_1, \bar x_2, \bar y_1, \bar y_2$ are found, one can determine the denominators, $a$ and $b$, by inspection.

Now we analyze equation (\ref{MasterEq}). First of all, at least one denominator must be equal to 1. If not, then the value of each parenthesis does not exceed $1/2+1/3=5/6$, and their product is less than 1. If 1 is present in the denominator in both parentheses then we have a Diophantine equation
$$
\left(1+\frac{1}{x}\right) \left(1+\frac{1}{y}\right) = AB\in\Z_+
$$
that, up to permutations, has the solutions $(1,1), (2,1)$ and $(2,3)$. The respective values of $AB$ are 4,\ 3 and 2. These solutions correspond to the 2-friezes (\ref{SixOne}) and (\ref{SixThree}), (\ref{SixFour}), and (\ref{SixFive}), respectively. 

If 1 is present in the denominator in only one parenthesis then we have a Diophantine equation
$$
\left(1+\frac{1}{x}\right) \left(\frac{1}{z}+\frac{1}{y}\right) = AB\in\Z_+
$$
where the second parenthesis does not exceed 5/6. It follows that $x\in \{1,2,3,4,5\}$. A case by case consideration yields one more solution: $x=5$ and $\{y,z\}=\{2,3\}$. The respective value of $AB$ is 1, and this corresponds to the 2-frieze (\ref{SixTwo}).
\end{proof}

\begin{rem}
Note that we have listed 2-friezes only up to dihedral symmetry. 
To be consistent with the case of the Coxeter-Conway friezes, where the count is given by the 
Catalan numbers, one should count the cases separately, that is, 
not to factorize by the dihedral group or its subgroups. 
Then the number of 2-friezes for $n=4,5,6,7$ is as follows: $1,5,51,868$. 
These numbers appeared in \cite{Pro}. 
We have independently verified this using an applet created by R. Schwartz for this purpose. 
The 2-frieze pattern of Proposition \ref{n=5Prop} gives 5 different patterns. 
The patterns of Proposition \ref{n=6Prop} contribute 1,8,6,12 and 24 different patterns, respectively. 
We do not have a proof that 868 is the correct answer, nor can we prove that the number of 
arithmetic 2-friezes is finite for each $n$. 
We hope to return to this fascinating combinatorial problem in the near future. 
Curiously, the only appearance of the sequence  $1,5,51,868$ in 
Sloane's Online Encyclopedia \cite{Slo} is in connection with Propp's paper \cite{Pro}.
\end{rem}

\subsection{One-point stabilization procedure}\label{StabOneSec}

Below we describe a procedure that allows one to obtain 2-frieze patterns
of width $m+1$ from 2-frieze patterns of width $m$.
More precisely, we consider $2n$-periodic 2-frieze patterns
whose first non-trivial row contains two consecutive entries 
equal to $1$.
Such a pattern can be obtained from a $2(n-1)$-periodic pattern
and, in this sense, may be considered ``trivial''.

\begin{prop}
\label{StabProp}
Let
$$
\ldots b_1,\;a_1,\;b_2,\;a_2,\;b_3,\;a_3\ldots
$$
be the first non-trivial row that generates a
$2n$-periodic arithmetic 2-frieze as in \eqref{DefClo}.
Then the frieze with the first non-trivial row
\begin{equation}
\label{StabRow}
\ldots{}b_n,\; a_n,\;b_1+1,\;a_1+b_2+1,\;
b_2+1,\;1,\;1,\;a_2+1,\;b_3+a_2+1,\;a_3+1,\;b_4,\;a_4\ldots
\end{equation}
is a $2(n+1)$-periodic arithmetic 2-frieze.
\end{prop}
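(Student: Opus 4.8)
\emph{Proof proposal.} The plan is to pass to the geometric picture and realize the stabilization \eqref{StabRow} as the insertion of a \emph{single} vertex into the polygon attached to $F(a_i,b_i)$. By Theorem~\ref{MainOne} and Proposition~\ref{OSTProp}, the closed $2n$-periodic frieze $F(a_i,b_i)$ corresponds to a closed $n$-gon $(V_i)_{i\in\Z}$ in $\R^3$ with $V_{i+n}=V_i$ and $\det(V_{i-1},V_i,V_{i+1})=1$, and by Lemma~\ref{DetLem} every entry of the frieze is a $3\times3$ determinant of the $V_i$. First I would keep \emph{all} the old vertices and insert one new vertex $X$ into the edge between $V_n$ and $V_1$, determined by the three local unit-determinant conditions $\det(V_{n-1},V_n,X)=\det(V_n,X,V_1)=\det(X,V_1,V_2)=1$ imposed by its four new neighbours; these are three independent linear conditions, so $X$ is unique, and writing it in the basis $\{V_{n-1},V_n,V_1\}$ one finds $X=-V_{n-1}+yV_n+V_1$ with $y\in\Z$. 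The two forced values $1,1$ in \eqref{StabRow} are then nothing but the old consecutive unit determinants $\det(V_{n-1},V_n,V_1)=\det(V_n,V_1,V_2)=1$, which is exactly the compatibility that makes a one-vertex insertion possible; this is the role of the ``two consecutive $1$'s''.

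Granting this, the frieze claim is immediate. The sequence $(W_j)=(\dots,V_{n-1},V_n,X,V_1,V_2,\dots)$ is a genuine closed $(n+1)$-gon: the three determinants straddling $X$ equal $1$ by construction and all the others are unchanged old ones, and the cycle closes after $n+1$ steps. Hence its monodromy is trivial and, by Theorem~\ref{MainOne} together with Lemma~\ref{DetLem}, the associated pattern is a closed $2$-frieze of period $2(n+1)$. Since only the four consecutive quadruples containing $X$ change, exactly the eight coefficients attached to $X,V_1,V_2,V_3$ are modified while $b_4,a_4,\dots,b_n,a_n$ stay untouched; computing these eight determinants from the original relation \eqref{recur} (a routine determinant calculation using $V_1=a_1V_n-b_1V_{n-1}+V_{n-2}$ and its neighbours) should reproduce $b_1+1,\,a_1+b_2+1,\,b_2+1,\,1,\,1,\,a_2+1,\,b_3+a_2+1,\,a_3+1$, i.e.\ \eqref{StabRow}. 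For integrality I would normalise by $\SL_3$ so that $V_1=e_1,V_2=e_2,V_3=e_3$; the recurrence \eqref{recur} with integer $a_i,b_i$ then places every $V_i$ in $\Z^3$, and since $X$ is an integer combination of the $V_i$, every new vertex lies in $\Z^3$, so every new entry, being a determinant of integer vectors, is an integer.

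The hard part is positivity, i.e.\ upgrading ``integer'' to ``positive integer''. Because $F(a_i,b_i)$ is arithmetic, Proposition~\ref{PosConvPro} shows the old polygon $(V_i)$ is convex; I would prove that inserting $X$ just outside the edge $V_nV_1$ preserves convexity, so that $(W_j)$ is again convex and Proposition~\ref{PosConvPro} forces \emph{all} entries of the new frieze to be positive. The delicate point is genuine: the entries involving $X$ are determinants of the form $-\,[\,\cdot,\cdot,V_{n-1}]+y\,[\,\cdot,\cdot,V_n]+[\,\cdot,\cdot,V_1]$, so they are combinations of old (positive) entries carrying a \emph{minus} sign, and positivity does not propagate from \eqref{StabRow} for free through the subtraction in the $2$-frieze rule $E=AD-BC$; one really must verify the convexity inequalities $\det(W_{i-1},W_i,W_j)>0$. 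Finally I would record that this statement is the case $k=4$ of Theorem~\ref{MultiStabProp}, obtained by taking the second summand to be the trivial all-$1$'s frieze \eqref{Triv} of period $8$: substituting $b_i'=a_i'=1$ into \eqref{MultStabRow} returns \eqref{StabRow} verbatim.
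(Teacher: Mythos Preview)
Your geometric approach---insert a single vertex $X$ between $V_n$ and $V_1$, check that the resulting $(n+1)$-gon is closed, read off the eight modified coefficients, and deduce integrality from Lemma~\ref{DetLem}---is exactly the paper's proof. Your $X=-V_{n-1}+(b_2+1)V_n+V_1$ agrees with the paper's $W=(b_2+a_1+1)V_n-(b_1+1)V_{n-1}+V_{n-2}$ once one substitutes $V_1=a_1V_n-b_1V_{n-1}+V_{n-2}$, and your identification of the two forced $1$'s with $\det(V_{n-1},V_n,V_1)$ and $\det(V_n,V_1,V_2)$ is correct.

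The one substantive divergence is in positivity. You propose to verify directly that the new polygon is convex and then invoke Proposition~\ref{PosConvPro}; you flag this as the hard part and leave it undone. The paper's own proof sidesteps the convexity verification entirely with a slicker argument: pick the two consecutive diagonals $\Db_1$ and $\Db_{3/2}$ in the \emph{new} frieze and observe, via Lemma~\ref{DetLem}, that their entries are $|V_{n-1},V_n,V_i|$ and $|V_i,V_{i+1},V_n|$ for $1\le i\le n-3$---determinants involving only \emph{old} vertices, hence already entries of the original arithmetic frieze, hence positive. That gives a positive double zig-zag, and Theorem~\ref{MainSecond} then forces every entry to be positive. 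This avoids checking the infinitely many inequalities $\det(W_{i-1},W_i,W_j)>0$ one by one. (The paper does, separately, carry out the convexity argument you sketch---see the paragraph after the proof, formula~\eqref{WU} and Figure~\ref{PolyU}---so your route is viable; but it is the longer of the two, and as written your proposal stops short of actually executing it.)
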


In other words, we cut the line between $b_2$ and $a_2$, add $1,\,1$ and change
the three left neighbours: 
$$
(b_1,\,a_1,\,b_2)\to(b_1+1,\;a_1+b_2+1,\;b_2+1)
$$
and similarly with the three right neighbours.
The other entries remain unchanged.

\begin{proof}
Let $(V_i)$ be a solution to the difference equations \eqref{recur}
associated to the initial frieze.
We assume that $V_i\in\R^3$.
We wish to add an extra point $W\in\R^3$
so that the points
$$
\begin{array}{rcll}
\widetilde{V}_i&=&V_i,&i\leq n,\\[4pt]
\widetilde{V}_{n+1}&=&W,&\\
\end{array}
$$
give a solution to the difference equation
$$
\widetilde{V}_i=\widetilde{a}_i\,\widetilde{V}_{i-1}-
\widetilde{b}_i\,\widetilde{V}_{i-2}+\widetilde{V}_{i-3}.
$$
Geometrically speaking, we replace the $n$-gon
$\{V_1,\ldots,V_n\}$ by the $(n+1)$-gon
$\{V_1,\ldots,V_n,W\}$.

It is easy to check that the choice of $W$ is unique:
\begin{equation}
\label{ExtraPt}
W=(b_{2}+a_1+1)\,V_{n}-(b_1+1)\,V_{n-1}+V_{n-2}.
\end{equation}
The coefficients of the resulting equation are as follows
$$
\begin{array}{cccccccc}
\widetilde{b}_{n+1}
&\widetilde{a}_{n+1}&
\widetilde{b}_1&\widetilde{a}_1&\widetilde{b}_2&\widetilde{a}_2
&\widetilde{b}_3&\widetilde{a}_3\\[2pt]
\shortparallel&\shortparallel& \shortparallel& \shortparallel
& \shortparallel&
\shortparallel& \shortparallel& \shortparallel\\[2pt]
b_1+1&a_1+b_2+1&
b_2+1&1&1&a_2+1&b_3+a_2+1&a_3+1\\
\end{array}
$$
while $\widetilde{b}_i=b_i$ and $\widetilde{a}_i=a_i$
for $4\leq{}i\leq{}n$.
This corresponds to \eqref{StabRow}.

The frieze $F(\widetilde{a}_i,\widetilde{b}_i)$ generated by  \eqref{StabRow} is again integral.
Indeed, the entries of this frieze are polynomials in $\widetilde{a}_i,\widetilde{b}_i$,
see formula \eqref{DetTab}.
It remains to prove positivity of the frieze $F(\widetilde{a}_i,\widetilde{b}_i)$.

In the frieze $F(\widetilde{a}_i,\widetilde{b}_i)$, we choose two
consecutive diagonals $\Db_1$ and $\Db_{\frac{3}{2}}$.
Their entries are $\widetilde{v}_{i,1}$ and
$\widetilde{v}_{i+\half,\frac{3}{2}}$, respectively,
where $1\leq{}i\leq{}n-3$.
According to formula \eqref{MalDetEq}, one has:
$$
\widetilde{v}_{i,1}=
\left|
V_n,\; V_{n-1},\;V_i
\right|,
\qquad
\widetilde{v}_{i+\half,\frac{3}{2}}=
\left|
V_{i},\; V_{i+1},\; V_n
\right|.
$$
Therefore, these entries do not depend on $\widetilde{V}_{n+1}=W$
and, furthermore, all these entries belong to the initial frieze $F(a_i,b_i)$.
Hence, $\widetilde{v}_{i,1}$ and
$\widetilde{v}_{i+\half,\frac{3}{2}}$ are positive integers.

Finally, according to the rule of 2-friezes,
the diagonals $\Db_1$ and $\Db_{\frac{3}{2}}$ 
determine the rest and, moreover, all the entries are positive, see Theorem \ref{MainSecond}.
\end{proof}

\begin{rem}
It is clear that in the above stabilization process,
one can cut the first non-trivial line of $F(a_i,b_i)$ at an arbitrary place
(and not only between $b_2$ and $a_2$).
\end{rem}

Let us describe the geometry of one-point stabilization. 
The new point, $W$, is inserted between $V_n$ and $V_1$. 
One has the relation
$$
V_2=a_2V_1-b_2V_n+V_{n-1};
$$
it follows that 
$$
a_2V_1-V_2=b_2V_n-V_{n-1} =:U.
$$
One can easily check that
\begin{equation} \label{WU}
W=U+V_n+V_1.
\end{equation}
It follows from the definition of $U$ that
$$
\det (V_{n-1}, V_n, U) = \det (V_1, V_2, U)=0,\ \det (V_{n-2},V_{n-1},U)=b_2>0,\ \det (V_2,V_3,U)=a_2>0.
$$
Hence the vector $U$ belongs to the intersection of the two planes spanned by the pairs of vectors 
$(V_{n-1}, V_n)$ and $(V_1, V_2)$. 
Furthermore, $U$ is on the positive side of the two planes spanned by the pairs of vectors 
$(V_{n-2},V_{n-1})$ and $(V_2,V_3)$. 
Using the same central projection as in the proof of 
Lemma \ref{conv}, we conclude that the $n-1$-gon 
$\dots V_{n-2}, V_{n-1}, U, V_2, V_3,\dots$ in the horizontal plane is convex, 
see Figure \ref{PolyU}. 
This implies the inequalities $\det (V_{i-1},V_i, U) >0$ for $i\ne n,1,2$. 
In view of \eqref{WU} and the convexity of the polygon $(V_j)$, 
these inequalities imply that $\det (V_{i-1},V_i, W) >0$.

\begin{figure}[hbtp]
\includegraphics[width=5cm]{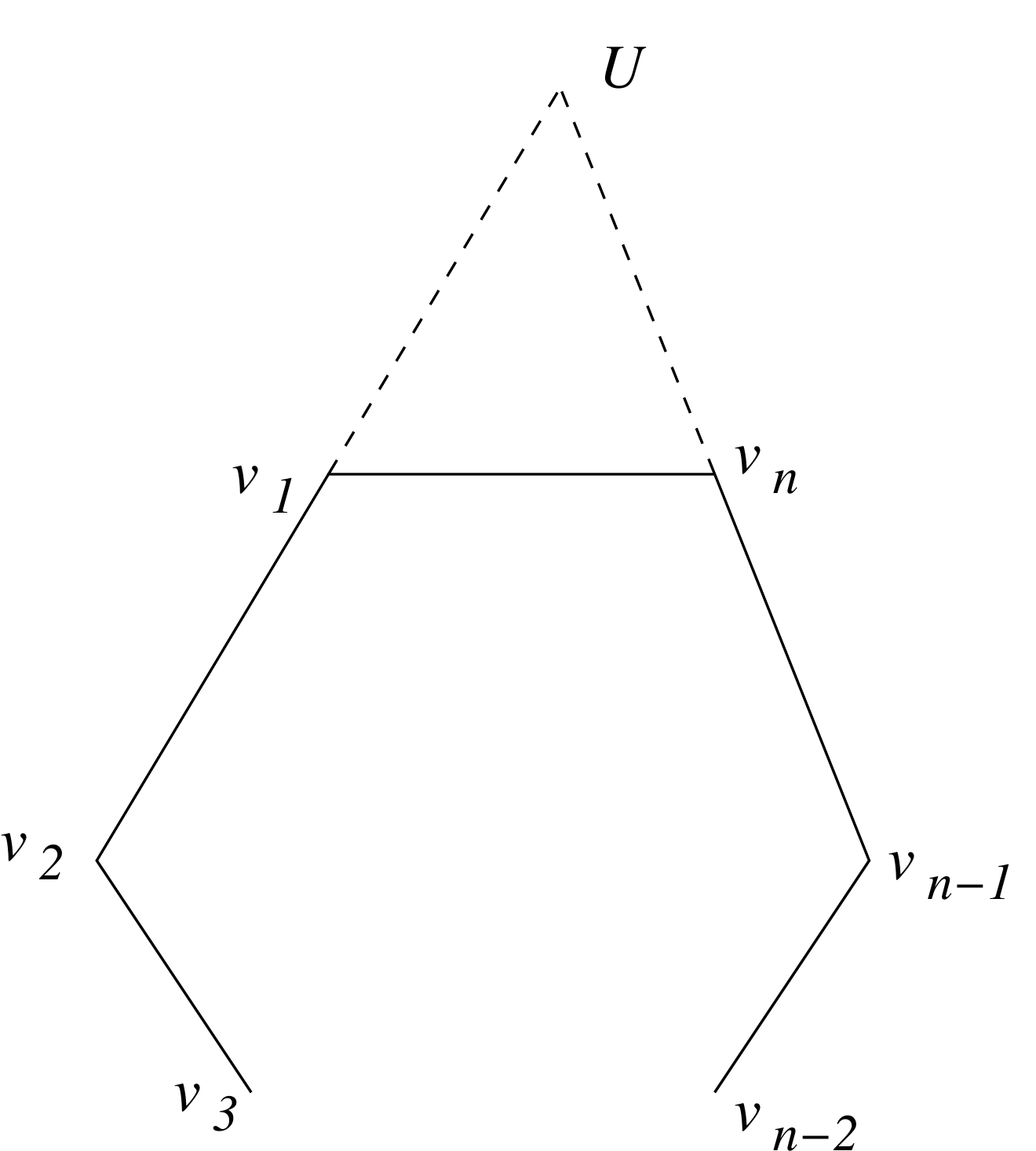}
\caption{Position of point $U$}
\label{PolyU}
\end{figure}

\medskip
The following statement is a reformulation of Proposition \ref{StabProp}.

\begin{cor}
An arithmetic 2-frieze pattern of width $m\geq1$ can be obtained via one-point stabilization from a pattern of width $m-1$ if and only if the second row
$(b_i,a_i)$ contains two consecutive ones.
\end{cor}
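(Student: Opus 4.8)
The plan is to read this corollary as the exact converse of Proposition \ref{StabProp} and to prove the two implications separately. The forward implication is immediate: if an arithmetic pattern of width $m$ is produced by one-point stabilization, then its first non-trivial row has the form \eqref{StabRow}, which by inspection contains the consecutive pair $\ldots,1,1,\ldots$ (the entries $\widetilde{a}_1=\widetilde{b}_2=1$). So the whole content lies in the converse.

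For the converse, suppose the second row $(b_i,a_i)$ of an arithmetic pattern $Q$ of width $m$ contains two consecutive $1$'s, and set $n=m+3$ so that $Q$ is $2(n+1)$-periodic and its coefficients may be indexed exactly as in \eqref{StabRow}. Using the $2n$-periodicity and the glide symmetry of Proposition \ref{Period}, I would relabel the indices so that the two ones sit where \eqref{StabRow} places them, namely $\widetilde{a}_1=\widetilde{b}_2=1$. I would then pass to the $(n+1)$-gon $(\widetilde{V}_i)$ attached to $Q$ via Lemma \ref{DetLem} and single out the vertex $W:=\widetilde{V}_0=\widetilde{V}_{n+1}$ that immediately precedes $\widetilde{V}_1$. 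Specializing the difference equations at indices $1$ and $2$ to $\widetilde{a}_1=1$ and $\widetilde{b}_2=1$ reproduces precisely the two expressions for the auxiliary vector $U$ and the identity \eqref{WU} from the geometric discussion following Proposition \ref{StabProp}; solving them backwards recovers the candidate coefficients of a width-$(m-1)$ pattern, for instance $b_2=\widetilde{b}_1-1$, $a_2=\widetilde{a}_2-1$, $b_3=\widetilde{b}_3-\widetilde{a}_2$ and $a_1=\widetilde{a}_{n+1}-\widetilde{b}_1$, by inverting the substitutions \eqref{StabRow}, all remaining entries being left unchanged.

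It then remains to check that deleting $W$ yields a genuine arithmetic $2$-frieze of width $m-1$ whose stabilization returns $Q$. Integrality is automatic, since every recovered coefficient is an integer combination of entries of $Q$; closedness reduces, through Theorem \ref{MainOne}, to the fact that the punctured polygon $(\widetilde{V}_1,\dots,\widetilde{V}_n)$ is again a closed polygon (trivial monodromy), inherited from $Q$; and the uniqueness of the inserted point in \eqref{ExtraPt} guarantees that re-stabilizing the smaller polygon at the gap reinserts exactly $W$, closing the loop. The genuinely delicate step, which I expect to be the main obstacle, is positivity: I must show that the punctured polygon is still convex, so that Proposition \ref{PosConvPro} (equivalently, formula \eqref{MalDetEq}) forces all entries, and in particular the recovered coefficients $\widetilde{b}_1-1,\widetilde{a}_2-1,\dots$, to be strictly positive. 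The only convexity conditions not directly inherited from the big $(n+1)$-gon are those attached to the single new side $\widetilde{V}_n\widetilde{V}_1$, namely the inequalities $\det(\widetilde{V}_n,\widetilde{V}_1,\widetilde{V}_j)>0$ for the intermediate vertices $\widetilde{V}_j$. I would establish these by running the radial-projection argument of Lemma \ref{conv} and the discussion around Figure \ref{PolyU} in reverse, using \eqref{WU} to compare the side $\widetilde{V}_n\widetilde{V}_1$ with the corner cut off at $W$.
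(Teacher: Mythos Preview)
Your proposal is correct, and in fact more careful than the paper itself: the paper gives no proof of this corollary, merely declaring it ``a reformulation of Proposition~\ref{StabProp}''. You are right that the forward implication is immediate from the shape of~\eqref{StabRow}, and that the converse---inverting the stabilization when two consecutive $1$'s appear---requires checking that the punctured polygon $(\widetilde V_1,\dots,\widetilde V_n)$ is again a closed convex $n$-gon. Your verification of the unit-determinant condition on the two new consecutive triples (coming precisely from $\widetilde a_1=1$ and $\widetilde b_2=1$) is exactly right, and positivity via convexity is the correct route.

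One simplification: the positivity step does not need~\eqref{WU} or the discussion around Figure~\ref{PolyU} run in reverse. Just project the convex $(n+1)$-gon radially to an affine chart as in the first half of the proof of Lemma~\ref{conv}; in that chart, deleting one vertex of a planar convex polygon obviously gives a planar convex polygon, so the planar determinants $\det(\widetilde v_n,\widetilde v_1,\widetilde v_j)$ are positive. Since each $\widetilde V_i$ is a \emph{positive} scalar multiple of its projection $\widetilde v_i$ (you keep the same vectors---no rescaling is performed), the $3$-space determinants $\det(\widetilde V_n,\widetilde V_1,\widetilde V_j)$ inherit positivity directly. This gives convexity of the punctured $n$-gon in one line, and the rest of your argument (closedness via Theorem~\ref{MainOne}, integrality, uniqueness of $W$ from~\eqref{ExtraPt}) goes through as you describe.
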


\begin{ex}
\label{NonStab}
(a)
The only $10$-periodic 2-frieze pattern \eqref{OldFriend} is obtained
by stabilization from the most elementary pattern \eqref{Triv}.
In this sense, there are no non-trivial $10$-periodic integral patterns.

(b)
All the patterns of  width 2, see Section \ref{n=6List}, except the first and the second,
are obtained by stabilization from \eqref{OldFriend}.
One therefore is left with two non-trivial $12$-periodic integral patterns,
namely \eqref{SixOne} and \eqref{SixTwo}.
\end{ex}

\subsection{Connected sum}\label{StabTwoSec}

We are ready to analyze the general connected summation 
and to prove Theorem \ref{MultiStabProp}. Let us start with an example. 

\begin{ex}\label{ExConS}
Consider the connected sum of the pentagon \eqref{OldFriend} with the hexagon \eqref{SixOne}. 
Cut the first row of \eqref{OldFriend} as follows:
$ 112\,|\,3211232$, 
insert six 2's, and change the two triples of neighbors of the block of 2's 
as required to obtain a 16-periodic arithmetic 2-frieze pattern corresponding to an octagon:
$$
\begin{array}{rrrrrrrrrrrrrrrr}
1&1&1&1&1&1&1&1&1&1&1&1&1&1&1&1\\
3&7&4&2&2&2&2&2&2&5&10&3&1&2&3&2\\
11&5&10&6&2&2&2&2&8&15&5&7&5&1&1&7\\
8&15&5&7&5&1&1&7&11&5&10&6&2&2&2&2\\
2&5&10&3&1&2&3&2&3&7&4&2&2&2&2&2\\
1&1&1&1&1&1&1&1&1&1&1&1&1&1&1&1
\end{array}
$$
\end{ex}
\medskip

We now turn to the proof of Theorem \ref{MultiStabProp}. 
Let us start with the remark that the roles played by the patterns $F(a_i,b_i)$ and $F(a_i',b_i')$ in the definition of connected sum are the same: interchanging the two results in the same pattern. 

First, we  prove that the connected sum of two closed 2-frieze patterns is also closed. 
Let $(V_i)$ be an $n$-gon corresponding to the difference equation \eqref{recur}, 
and let $(U_j)$ be a $k$-gon corresponding to a similar equation with coefficients $a_j',b_j'$. 
Consider a new difference equation with coefficients
$$
\begin{array}{l}
B_1=b_1'+b_1,
\quad 
A_1=a_1'+a_1+b_1'b_2,
\quad 
B_2=b_2'+b_2,
\quad 
A_2=a_2', 
\quad 
B_3=b_3',
\quad 
A_3=a_3', 
\quad
\dots  
\\[4pt]
\dots, \quad 
B_{k-1}=b_{k-1}',
 \quad
 A_{k-1}=a_{k-1}'+a_2, 
 \quad 
 B_k=b_k'+b_3+a_k' a_2 , 
 \quad 
 A_k=a_k'+a_3.
 \end{array}
$$
A solution to this equation is a sequence of points $W_m$ in $\R^3$; 
we may choose $W_{-2}, W_{-1}, W_{0}$ to be the standard basis. 
The polygon $(W_m)$ is  twisted: 
one has $W_{m+k} = M\left(W_m\right)$ for all $m$. 
The linear transformation $M$ is the monodromy of $W_m$. 

Assume that the vectors $V_{n-2}, V_{n-1},V_n$ also constitute the standard basis (this can be always achieved by applying a transformation from $\SL_3$). 

\begin{lem} 
\label{gluing}
The transformation $M$ takes $V_{n-2}, V_{n-1},V_n$ to $V_1,V_2,V_3$.
\end{lem}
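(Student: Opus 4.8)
The plan is to reduce the lemma to three explicit vector identities and then to exploit the fact that the interior coefficients of the $W$-equation are precisely those of the $U$-equation. First I would use the normalization: since $V_{n-2},V_{n-1},V_n$ and $W_{-2},W_{-1},W_0$ are both the standard basis, we have $W_{-j}=V_{n-j}$ for $j=0,1,2$, and since $M$ is the monodromy of $(W_m)$, $M(W_{-j})=W_{k-j}$. Thus $M(V_{n-2})=W_{k-2}$, $M(V_{n-1})=W_{k-1}$, $M(V_n)=W_k$, and the lemma becomes equivalent to the three identities $W_{k-2}=V_1$, $W_{k-1}=V_2$, $W_k=V_3$.

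Next I would observe that $A_i=a_i'$ for $2\le i\le k-2$ and $B_i=b_i'$ for $3\le i\le k-1$, so that on the interior steps $3\le i\le k-2$ the recurrence for $(W_m)$ coincides with the recurrence for the $U$-polygon. Consequently there is a fixed invertible linear map $\Phi$ (of determinant one) with $W_m=\Phi(U_m)$ for $0\le m\le k-2$, the map being pinned down by matching the frame $(W_0,W_1,W_2)$ with $(U_0,U_1,U_2)$ and then propagating by the shared recurrence. Conceptually, $M$ is the product of the $k$ transition matrices of the $W$-equation over one period; in the interior these are $U$-transitions, whose product over a full period is the identity because the $U$-polygon is closed, $U_{j+k}=U_j$. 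The two boundary corrections built into the connected-sum rule are exactly what converts this trivial $U$-monodromy into the three-step transition of the $V$-equation, i.e.\ the map carrying $(V_{n-2},V_{n-1},V_n)$ to $(V_1,V_2,V_3)$.

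To turn this into a proof I would compute the first few $W_m$ directly from the corrected coefficients $B_1,A_1,B_2$, using the $V$-recurrence $V_1=a_1V_n-b_1V_{n-1}+V_{n-2}$ and the bridge relation $b_2V_n-V_{n-1}=a_2V_1-V_2$; for instance one finds $W_1=V_1+a_1'V_n+b_1'(b_2V_n-V_{n-1})$, and the analogous expressions for $W_2,W_3$ identify $\Phi$ as the map sending the initial $U$-frame to the relevant $V$-data. Propagating $W_m=\Phi(U_m)$ through the interior is then automatic, and I would feed the outcome into the corrected final steps $A_{k-1},B_k,A_k$, invoking $U_{j+k}=U_j$ together with the symmetric bridge relation at the far end, to evaluate $W_{k-2},W_{k-1},W_k$ and check that they equal $V_1,V_2,V_3$ (the determinantal description of Lemma \ref{DetLem} could serve as an alternative bookkeeping device). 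The hard part will be precisely this boundary bookkeeping: the corrected coefficients mix the $V$- and $U$-data, so neither end of the sequence $(W_m)$ obeys the pure $U$-recurrence, and one must verify that the near-end corrections launch the interior with exactly the right $\Phi$-image while the far-end corrections, combined with the closure of the $U$-polygon, return precisely the frame $(V_1,V_2,V_3)$; tracking the index shifts and the two bridge relations is where the real work lies.
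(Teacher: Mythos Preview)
Your proposal is correct and its conceptual core is exactly the paper's approach: write the monodromy $M$ as the product of the $k$ transition matrices of the $W$-equation, peel off the boundary corrections so that the interior block is the $U$-monodromy (hence the identity), and identify the remaining boundary product as the three-step $V$-transition. The paper carries this out via explicit $3\times 3$ matrix factorizations of the products $N_1N_2$ and $N_{k-1}N_k$ (where $N_j$ is the one-step transition matrix), which is somewhat cleaner than your vector-tracking with the auxiliary map $\Phi$: the matrix decomposition makes the cancellation of the $U$-monodromy manifest in one line, whereas your route requires first computing $W_0,W_1,W_2$ to pin down $\Phi$, then separately unwinding the far-end corrections. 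Both routes involve the same boundary bookkeeping you flag as the hard part, but the paper's factorization packages it more compactly.
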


\begin{proof}
Let us start with some generalities about difference equations and their monodromies  (see \cite{OST} for a detailed discussion).
Consider the difference equation \eqref{recur}. Let us construct its solution $V_i$ choosing the initial condition  $V_{-2}, V_{-1}, V_{0}$ to be the standard basis in $\R^3$. This is done by building a $3\times\infty$ matrix in which each next column is a linear combination of the three previous ones, as prescribed by  \eqref{recur}:
\begin{equation} \label{longmat}
\left(\begin{array}{lllllll}
1&0&0&1&a_2&a_2a_3-b_3&\dots\\[4pt]
0&1&0&-b_1&-b_1a_2+1&-b_1a_2a_3+a_3+b_1b_3&\dots\\[4pt]
0&0&1&a_1&a_1a_2-b_2&a_1a_2a_3-b_3a_1-a_3b_2+1&\dots
\end{array}\right).
\end{equation}
Three consecutive columns in the matrix \eqref{longmat} are also given by the product
$N_1N_2\dots N_r$ of $3\times 3$~matrices of the form
\begin{equation}
\label{threebythree}
N_j=\left(\begin{array}{ccc}
0&0&1\\
1&0&-b_j\\
0&1&a_j
\end{array}\right).
\end{equation}

With this preparation, we can compute the monodromy $M$ of the twisted polygon $(W_m)$. 
Thus $M$ is given as a product of matrices as in \eqref{threebythree} where all matrices, 
except the first two and the last two, are the same as for the polygon $(U_j)$.
The product of the first two is:
\begin{equation} 
\label{mat1}
\begin{array}{l}
\left(\begin{array}{ccc}
0&0&1\\[4pt]
1&0&-B_1\\[4pt]
0&1&A_1
\end{array}\right) 
\left(\begin{array}{ccc}
0&0&1\\[4pt]
1&0&-B_2\\[4pt]
0&1&A_2
\end{array}\right)
\\[26pt]
\qquad\qquad\qquad
=\left(\begin{array}{lll}
0&1&a_2'\\[4pt]
0&-b_1-b_1'&-b_1a_2'+1-b_1'a_2'\\[4pt]
1&a_1+b_2b_1'+a_1'&a_1a_2'-b_2+b_1'a_2'b_2+a_1'a_2'-b_2'
\end{array}\right),
\end{array}
\end{equation}
and the product of the last two is:
\begin{equation} 
\label{mat2}
\begin{array}{l}
\left(\begin{array}{ccc}
0&0&1\\[4pt]
1&0&-B_{k-1}\\[4pt]
0&1&A_{k-1}
\end{array}\right) 
\left(\begin{array}{ccc}
0&0&1\\[4pt]
1&0&-B_{k}\\[4pt]
0&1&A_k
\end{array}\right)
\\[26pt]
\qquad\qquad\qquad
=
\left(\begin{array}{lll}
0&1&a_k'+a_3\\[4pt]
0&-b_{k-1}'&1-b_{k-1}'a_k'-b_{k-1}'a_3\\[4pt]
1&a_{k-1}'+a_2&a_k' a_{k-1}'+a_{k-1}'a_3+a_2a_3-b_k'-b_3
\end{array}\right).
\end{array}
\end{equation}
Next, we observe that the matrices \eqref{mat1} and \eqref{mat2} decompose as
$$
\left(\begin{array}{ccc}
1&0&0\\[4pt]
-b_1&1&0\\[4pt]
a_1&-b_2&1
\end{array}\right)
\left(\begin{array}{ccc}
0&1&a_2'\\[4pt]
0&-b_1'&1-b_1'a_2'\\[4pt]
1&a_1'&a_1'a_2'-b_2'
\end{array}\right)
$$
and
$$
\left(\begin{array}{ccc}
0&1&a_k'\\[4pt]
0&-b_{k-1}'&1-b_{k-1}'a_k'\\[4pt]
1&a_{k-1}'&a_{k-1}'a_k'-b_k'
\end{array}\right)
\left(\begin{array}{ccc}
1&a_2&a_2a_3-b_3\\[4pt]
0&1&a_3\\[4pt]
0&0&1
\end{array}\right)
$$
respectively.
Thus $M$ is the product of $k+2$  matrices, and the product of the ``inner" $k$ of them is the monodromy of the closed $k$-gon $(U_j)$, that is, the identity matrix. What remains is the product of the first and the last matrices:
$$
\begin{array}{l}
\left(\begin{array}{ccc}
1&0&0\\[4pt]
-b_1&1&0\\[4pt]
a_1&-b_2&1
\end{array}\right)
\left(\begin{array}{lll}
1&a_2&a_2a_3-b_3\\[4pt]
0&1&a_3\\[4pt]
0&0&1
\end{array}\right)
\\[26pt]
\qquad\qquad\qquad
=
\left(\begin{array}{lll}
1&a_2&a_2a_3-b_3\\[4pt]
-b_1&-b_1a_2+1&-b_1a_2a_3+a_3+b_1b_3\\[4pt]
a_1&a_1a_2-b_2&a_1a_2a_3-b_3a_1-a_3b_2+1
\end{array}\right).
\end{array}
$$
The last matrix is the fourth 3 by 3 minor in \eqref{longmat}, that is, it takes $V_{-2}, V_{-1}, V_0$ to $V_1,V_2,V_3$, as claimed.
\end{proof}

Due to Lemma \ref{gluing}, the connected summation under consideration is the following procedure: arrange, by applying a volume preserving linear transformation, 
that the vertices  $W_{-2}, W_{-1}, W_{0}$ of 
 the twisted polygon $(W_m)$  coincide with $V_{n-2}, V_{n-1},V_n$, and insert $k-3$ vertices $W_1,W_2,\dots,W_{k-3}$ between $V_n$ and $V_1$. By Lemma \ref{gluing}, the vertices $W_{k-2}, W_{k-1}, W_k$ will coincide with $V_1,V_2,V_3$. 
 Thus a segment of length $k+3$ of the twisted polygon $(W_m)$ is pasted onto the polygon $(V_i)$ over coinciding triples of vertices on both ends. We have constructed a closed $(n+k-3)$-gon 
$$
\left\{
W_1,\, W_2,\dots,W_{k-3},\,V_1,\,V_2,\dots,V_n
\right\}
$$
satisfying the difference equation with coefficients as described in Theorem \ref{MultiStabProp}. 

Now we need to show that the connected sum of two arithmetic 2-frieze patterns is arithmetic as well. The argument is similar to the proof of Proposition \ref{StabProp}. The entries of the new pattern are polynomials in the entries of the first row, hence, integers. It remains to show that they are positive. For that purpose, we show there is a positive double zig-zag and refer to the positivity of Theorem \ref{MainSecond}.

Consider the $(n+k-3)$-gon corresponding to the connected sum, and assume that its vertices 
labeled $1$ through $n$ are the vertices of the $n$-gon 
$V_1,\dots,V_n$. Let $V_{n+1},\dots,V_{n+k-3}$ be the remaining vertices. 
Consider the consecutive diagonals $\Db_{n+2}$ and $\Db_{n+\frac{5}{2}}$. 
According to formula~\eqref{MalDetEq}, the entries of these two diagonals are 
$|V_i,V_{n-1},V_n|$ and $|V_i,V_{i+1},V_n|$, respectively. 
For $i=1,2,\dots,n-3$, these determinants are positive because the points involved are 
vertices of a convex $n$-gon $(V_j)$.

We claim that  $|V_i,V_{n-1},V_n|$ and $|V_i,V_{i+1},V_n|$ are also positive for $i=n+2,n+3,\dots,n+k-3$. Indeed, reversing the roles of the $n$-gon and $k$-gon in the construction of connected sum, we may assume that the $k$ consecutive points $V_{n-1},V_n,\dots,V_{n+k-3},V_1$ are the vertices of a convex $k$-gon $(U_j)$. This yields the desired positivity.

Theorem \ref{MultiStabProp} is proved. 

\begin{rem}
The procedure of connected sum can be understood directly from the friezes
as a vertical gluing of two friezes.
More precisely, the connected sum consists in choosing two consecutive columns in each frieze
and connecting them on the pair $1\; 1$. 
The connected two columns give two consecutive columns in the new frieze.
For instance in Example \ref{ExConS}, the new frieze is obtained by connecting
the columns\\
$$
\begin{matrix}
1&1\\
2&2\\
2&2\\
1&1
\end{matrix}\qquad \text{and}
\qquad
\begin{matrix}
1&1\\
2&3\\
1&1
\end{matrix}
$$
of  the friezes \eqref{SixOne} and \eqref{OldFriend} respectively,
one of the top of the other.

This procedure do not allow to obtain all the arithmetic friezes of a given width.
However, there exists a  more general procedure,
for which the gluing is not necessarily on a pair of ones,
that allows to construct more friezes. 
This procedure will be described in a separate work.
\end{rem}

\subsection{Examples of infinite arithmetic 2-frieze patterns}\label{ThreeEx}

In this section, we give examples of infinite  arithmetic 2-frieze patterns 
bounded above by a row of 1's and on the left by a double zig-zag of 1's.

\begin{ex}
In the following 2-frieze dots mean that the entries in the row stabilize.
$$
\setcounter{MaxMatrixCols}{20}
\begin{matrix}
1&1&1&1&1&1&1&1&1&1&1\\
&1&1&2&3&3&\dots\\
&&1&1&3&6&6&\dots\\
&&&1&1&4&10&10&\dots\\
&&&&1&1&5&15&15&\dots\\
&&&&&1&1&6&21&21&\dots
\end{matrix}
$$
The first two non-trivial South-East diagonals consist of consecutive positive integers and of consecutive binomial coefficients.
\end{ex}

\begin{ex}
In the next example, we choose two vertical arrays of 1's as the double zig-zag. As before, 
dots mean stabilization.
$$
\setcounter{MaxMatrixCols}{20}
\begin{matrix}
1&1&1&1&1&1&1&1&1&1&1&1\\
1&1&2&4&5&5&\dots\\
1&1&2&6&15&20&20&\dots\\
1&1&2&6&21&56&76&76&\dots\\
1&1&2&6&21&77&209&285&285&\dots\\
1&1&2&6&21&77&286&780&1065&1065&\dots\\
1&1&2&6&21&77&286&1066&2911&3976&3976&\dots
\end{matrix}
$$
The pattern is clear: all rows and all columns stabilize; the stabilization starts along two parallel South-East diagonals, and there is one other diagonal between the two, consisting of the numbers $1,4,15,56,209,780,2911,\dots$
The respective numbers in the two stabilizing diagonals differ by~1. 
It follows that the numbers on the diagonal between the two are the differences between the consecutive numbers on either of the stabilizing diagonals.

The numbers $d_n$ on the upper stabilizing diagonal $1,5,20,76,285,1065,3976,\dots$ satisfy the  relation
$$
d_{n+1}=\frac{d_n(d_n-1)}{d_{n-1}}
$$
that follows from the 2-frieze relation. One learns from Sloane's Encyclopedia
\cite{Slo} that these numbers also satisfy a linear recurrence
$$
d_{n+1}=4d_n-d_{n-1}+1,
$$
which can be easily proved by induction on $n$. Solving the above linear recurrence is standard.
\end{ex}

\begin{ex}
In the next example, the double zig-zag of 1's indeed looks like a zig-zag:
$$
\setcounter{MaxMatrixCols}{20}
\begin{matrix}
1&1&1&1&1&1&1&1&1&1&1\\
1&1&2&3&4&6&5&6&5&6&5\\
  &1&1&5&14&14&31&19&31&19&31\\
1&1&2&3&14&70&47&157&66&157&66\\
  &1&1&5&14&42&353&155&793&221&793\\
1&1&2&3&14&70&131&1782&507&4004&728\\
   &1&1&5&14&42&353&417&8997&1652&20216\\
1&1&2&3&14&70&131&1782&1341&45425&5373      
\end{matrix}
$$
In this 2-frieze pattern, the horizontal and vertical stabilization is different from the previous examples: each row and each column is eventually 2-periodic. There are five different South-East diagonals. Interestingly, they are all in Sloane's Encyclopedia \cite{Slo}. We list them here, along with their Sloane's numbers:
$$
\begin{array}{ll}
1,2,5,14,42,131,417,\dots & A080937;\\[4pt]
1,3,14,70,353,1782,8997,\dots & A038213;\\[4pt]
1,4,14,47,155,507, 1652,\dots & A094789;\\[4pt]
1,6,31,157,793,4004,20216,\dots & A038223;\\[4pt]
1,5,19,66,221,728,2380,\dots & A005021.
\end{array}
$$
\end{ex}

 \section{Appendix: Frieze patterns of Coxeter-Conway, difference equations, \\
 \hspace{-2cm} polygons, and the
 moduli space $\cM_{0,n}$}

In this appendix we review the classical case of Coxeter-Conway frieze patterns in their relation with second order difference equations, polygons in the plane and in the projective line, and the configuration space of  the projective line. We refer to \cite{Cox,CoCo} for information on frieze patterns; see also \cite{MS} and \cite{Tab} for details concerning some of our remarks.\footnote{We strongly recommend R. Schwartz's applet \url{http://www.math.brown.edu/~res/Java/Frieze/Main.html}}

As before, we consider the space $\cC_n$ of polygons in $\pP^1$, that is, $n$-tuples of cyclically ordered points $(v_i)$ such that $v_i\ne v_{i+1}$ for all $i$. Polygons in $\pP^1$ are considered modulo projective equivalence.  Let $\tilde \cC_n$ be the space of origin symmetric $2n$-gons $(V_i)$ in the plane satisfying the determinant condition $|V_i,V_{i+1}|=1$ for all $i$. Polygons in the plane are considered modulo $\SL_2$-equivalence.

Another relevant space is the moduli space $\cM_{0,n}$ of stable curves of genus zero with $n$ distinct marked points, 
 defined as the space of ordered $n$-tuples of points in $\CP^1$ modulo projective equivalence:
$$
\cM_{0,n}=
\left\{
(v_1,\ldots,v_n)\in\CP^1
\left|
v_i\not=v_j,\;i<j
\right.
\right\}
/\PSL(2,\C).
$$
The space $\cM_{0,n}$ is classical, and it continues to play an important role in the current research
(see, e.g., \cite{AFV}).
We show in this Appendix that $\cM_{0,n}$ is an open dense subset of a cluster manifold, 
provided $n$ is odd (this condition is a 1-dimensional counterpart to the condition that $n$
is not a multiple of 3 that we encountered earlier).
This cluster structure is closely related to that on the Teichmuller space, see~\cite{FG}, but it
is more difficult to construct.
We did not find an appropriate reference in the literature~\cite{Cha}.
We use the classical Coxeter-Conway friezes 
(with coefficients in $\C$). 
The space $\cM_{0,n}$ coincides with the subset of friezes such that all the entries are different from 0. 
This observation is rather simple but we did not find it explicitly in the literature. Two immediate consequences are as follows.

\begin{enumerate}
\item
One obtains several natural coordinate systems on $\cM_{0,n}$, one of which is compatible with a cluster structure. More precisely, $\cM_{0,n}$ is a smooth cluster manifold of type~$A_{n-3}$.
\item
Many objects related to $\cM_{0,n}$, such as discrete versions of KdV, etc., can be formulated in terms of Coxeter-Conway friezes.
\end{enumerate}

\medskip

 \paragraph*{\bf Space $\tilde \cC_n$, difference equations and Coxeter-Conway friezes}

We consider the following, infinite and row $n$-periodic, frieze pattern:
$$
\begin{array}{ccccccccccc}
\cdots&&1&&1&&1&&1&&\cdots\\[4pt]
&C_i&&C_{i+1}&&C_{i+2}&&C_{i+3}&&C_{i+4}\\[4pt]
&& \cdots&& \cdots&& \cdots&& \cdots&&
\end{array}
$$
where $C_i(=C_{i+n})$ are formal variables
and where all the entries are polynomials determined by the  first row via the frieze rule
$AD-BC=1$, for each elementary square:
$$
\begin{array}{ccc}
&B&\\[4pt]
A&&D\\[4pt]
&C&
\end{array}
$$
For instance, the entries in the next row are:
$C_iC_{i+1}-1$, etc.
As before, a numerical frieze $F(c_i)=F(C_i)|_{C_i=c_i}$ is obtained by evaluation.

\begin{rem}
In order to give a correct definition of space of friezes,
one has to adapt the technique
of algebraic friezes and treat $C_i$ as formal variables.
Otherwise, the frieze rule does not suffice to determine the entries
of the pattern (if too many of $c_i$ vanish), cf. Section \ref{ANuF}.
\end{rem}

An $n$-periodic frieze pattern is closed if  it contains a row of $1$'s (followed by a row of~$0$'s).
$$
\begin{array}{ccccccccccc}
\cdots&&1&&1&&1&&1&&\cdots\\[4pt]
&c_i&&c_{i+1}&&c_{i+2}&&c_{i+3}&&c_{i+4}\\[4pt]
&& \cdots&& \cdots&& \cdots&& \cdots&&\\[4pt]
\cdots&&1&&1&&1&&1&&\cdots
\end{array}
$$
The width (the number of non-trivial rows) of the
above frieze pattern is equal to $n-3$, see \cite{CoCo}.

One associates a second order difference equation with periodic coefficients with a closed frieze pattern: 
\begin{equation}
\label{MSTS}
V_{i+1}=c_i \,V_{i}-V_{i-1}; \qquad c_{i+n}=c_i.
\end{equation}
We understand its solutions $(V_i)$ as vectors in the plane satisfying the relation $|V_i,V_{i+1}|=1$. Equation \eqref{MSTS} determines the polygon $(V_i)$ uniquely, up to $\SL_2$-action.

We label $(v_{i,j})_{i,j\in \Z}$ the entries of the frieze, such that $v_{i,i}=c_i$, and according to the scheme:
$$
\begin{array}{ccc}
&v_{i,j}&\\[4pt]
v_{i,j-1}&&v_{i+1,j}\\[4pt]
&v_{i+1,j-1}&
\end{array}
$$
Analogs of Proposition \ref{TabProp}, Proposition \ref{proprec} and Lemma \ref{DetLem} hold true providing explicit formul{\ae}. Namely, one has: 
$$
v_{i,j}=|V_i,V_j|,
$$
and
\begin{equation} \label{matdet}
v_{i,j}=\left|\begin{array}{cccccc}
c_{j}&1&&&\\\
1&c_{j+1}&1&&\\
&\ddots&\ddots&\ddots&\\
&&1&c_{i-1}&1\\
&&&1&c_{i}
\end{array}\right|.
\end{equation}

As a consequence of these formul{\ae}, $V_{i+n}=-V_i$ for all $i$, that is, the monodromy of 
equation~\eqref{MSTS} is $-\Id\in\SL_2$. 
This provides the equivalence between closed frieze patterns and the space of polygons $\tilde \cC_n$.

\medskip 

 \paragraph*{\bf Polygons in the plane and in the projective line}

As before, one has a natural projection $\tilde \cC_n\to \cC_n$ from~$\R^2$ to $\RP^1$. 
If $n$ is odd then this is a bijection, cf. Section \ref{relations}. This provides an equivalence between projective equivalence classes of $n$-gons in $\CP^1$ and $SL_2$-equivalence classes of origin symmetric $2n$-gons $\C^2$, subject to the unit determinant condition. 

Note that, over reals, there is an additional obstruction to lifting a polygon from $\RP^1$ to~$\R^2$. 
Let $n$ be odd and $(v_i)$ be a polygon in the projective line. Let $V_i\in \R^2$ 
be some lifting of points~$v_i$. 
For the system of equations
$$
t_it_{i+1} = 1/|V_i,V_{i+1}|,\ i=1,\dots,n-1,\quad t_1t_n=1/|V_1,V_n|
$$
to have a real solution, one needs $\Pi_{i=1}^n |V_i,V_{i+1}| >0$.  If this condition holds then $t_i$ is uniquely determined, up to a common sign; otherwise there is a lifting satisfying the opposite condition $|V_i,V_{i+1}|=-1$ for all $i$.

\medskip 

 \paragraph*{\bf Cluster coordinates}

The space of friezes has another natural coordinate system
apart from~$c_i$.
Unlike the coordinates $c_i$ that satisfy  three very non-trivial
equations given by the condition that the frieze pattern is closed,
the new coordinates are free. These three conditions are as follows:
$$
v_{0,n-1}=1,\qquad v_{-1,n-1}=0,\qquad v_{0,n}=0
$$
where $v_{i,j}$ are given by the determinants \eqref{matdet} (the fourth condition, $v_{-1,n}=-1$, follows from the fact that the monodromy is area-preserving).

An arbitrary zig-zag (i.e. piecewise linear path from top to bottom such that each segment goes either down-right or down-left) filled by the variables 
$x_1, \ldots{},x_{n-3}$
$$
 \begin{array}{ccccccc}
1&& 1&&1&&\cdots
 \\[4pt]
&x_1&&\cdots&&&
 \\[4pt]
&&x_2&&\cdots&&
 \\[4pt]
 &x_3&&\cdots&&&
 \\[4pt]
  x_4&&\cdots&&&&
 \\[4pt]
&\cdots &&\cdots & &&\\[4pt]
1&&1&&1&&\cdots
\end{array}
$$
determines the rest of the pattern.
The subalgebra of $\C(x_1, \ldots{},x_{n-3})$ generated by all the rational functions arising in the pattern
is the cluster algebra
associated to the quiver
of type $A_{n-3}$, see~\cite{CaCh}.
The initial zig-zag $(x_1, \ldots{},x_{n-3})$ forms the initial cluster,
and to different zig-zags correspond different clusters. 
However, some clusters are not obtained as zig-zags in the pattern.
In type  $A_{n-3}$ there is a correspondence between clusters and triangulations of a $n$-gon
\cite{FZ1} (see also \cite{Sco}).
The clusters which are not zig-zags in the frieze correspond to triangulations containing inner triangles
(i.e. triangles built on three diagonals). 

One then constructs a smooth cluster manifold gluing together
the tori $(\C^*)^{n-3}$ according to the coordinate changes defined
by consecutive mutations.

\begin{prop}
\label{LastProp}
The space $\cM_{0,n}$ is a smooth submanifold of the constructed
cluster manifold.
\end{prop}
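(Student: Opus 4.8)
The plan is to realize $\cM_{0,n}$ as the non-vanishing locus of all frieze entries inside the cluster manifold, and then to verify that this locus is open and dense. The embedding itself requires almost no construction; the content is in identifying the right subset and checking compatibility with the chart structure.

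First I would use the identification, established above, of the space of closed Coxeter-Conway friezes (for $n$ odd) with $\cC_n$, together with the formula $v_{i,j}=|V_i,V_j|$. A frieze entry vanishes exactly when the two corresponding lifts are proportional, i.e. when the two vertices coincide in $\pP^1$. Since the determinant condition $|V_i,V_{i+1}|=1$ already forces consecutive vertices to be distinct, requiring \emph{all} entries of the frieze to be nonzero is equivalent to $v_i\neq v_j$ for all $i\neq j$. Thus $\cM_{0,n}$ is precisely the set of closed friezes with no zero entry, which is the identification already asserted in the text.

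Next I would exploit that the cluster algebra is of type $A_{n-3}$, that its clusters correspond to triangulations of an $n$-gon, and that the cluster variables are exactly the frieze entries $v_{i,j}$ (the diagonals of the polygon). Each triangulation $T$ gives a chart $U_T\cong(\C^*)^{n-3}$ of the cluster manifold $X$, with coordinates the $n-3$ entries on the diagonals of $T$; for the zig-zag triangulations these are the zig-zag coordinates, and every other entry is a Laurent polynomial in them. A point of $\cM_{0,n}$, having all entries nonzero, lies in $U_T$ for \emph{every} $T$, and inside a fixed $U_T$ the subset $\cM_{0,n}$ is cut out by requiring the finitely many remaining entries, each a Laurent polynomial in the coordinates, to be nonzero. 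This is a Zariski-open condition, so $\cM_{0,n}$ is open in $U_T$, and since $U_T$ is open in $X$, it is an open, hence smooth, submanifold of $X$.

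For density I would invoke irreducibility: $X$ is covered by the charts $U_T$, each isomorphic to the irreducible torus $(\C^*)^{n-3}$, and any two charts overlap along a dense open set since the transition maps are the birational mutation maps; hence $X$ is irreducible of dimension $n-3$, and a nonempty open subset is automatically dense. The step deserving the most care, and the main obstacle, is the interface with the non-zig-zag clusters: one must confirm that the triangulations with inner triangles genuinely furnish coordinate charts of the \emph{same} manifold — their cluster variables are still frieze entries and still algebraically independent — so that $\cM_{0,n}$ sits inside the full cluster manifold $X$ rather than only inside the union of the zig-zag charts. Once this compatibility is secured, openness and density follow as above.
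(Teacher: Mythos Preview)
Your argument is correct and follows the same route as the paper's very brief proof: observe that $v_{i,j}=|V_i,V_j|$ vanishes precisely when $v_i=v_j$, so $\cM_{0,n}$ is exactly the locus where all frieze entries are nonzero and hence lies in the smooth part of every chart. Your density discussion goes beyond what the paper actually proves, and your worry about non-zig-zag charts is unnecessary since, as the text notes, all cluster variables of the $A_{n-3}$ algebra arise as entries of the pattern.
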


\begin{proof}
The fact that $v_i\not=v_j$ for all $1\leq{}i<j\leq{}n$ in the definition of $\cM_{0,n}$,
is equivalent to the fact that all the entries of the corresponding frieze
$v_{i,j}\not=0$.
Therefore, the points of $\cM_{0,n}$ are non-singular in any chart.
\end{proof}

\begin{ex}
If $n=5$, then one has
$$
 \begin{array}{cccccccc}
&1&& 1&&1&&\cdots
 \\[4pt]
x_1&&\frac{x_2+1}{x_1}&&\frac{x_1+1}{x_2}&&x_2&\cdots
 \\[4pt]
&x_2&&\frac{x_1+x_2+1}{x_1x_2}&&x_1&&\cdots
 \\[4pt]
1&&1&&1&&1&\cdots
\end{array}
$$
which correspond to the $A_2$-case, quite similarly to
Example \ref{ClustFive}.
\end{ex}

The cluster structure on $\cM_{0,n}$, with $n=2m+1$, that we have just constructed,
coincides with that communicated to us by F. Chapoton \cite{Cha}.
 
 \bigskip

\noindent \textbf{Acknowledgements}.
We are grateful to the Research in Teams program at BIRS
where this project was started.
We are pleased to thank Ph. Caldero, F. Chapoton, B. Dubrovin, V. Fock,
B.~Keller, R. Kenyon, I. Krichever, J. Propp, D.~Speyer, Yu. Suris, and A. Veselov
 for interesting discussions. 
Our special gratitude goes to R. Schwartz for numerous fruitful discussions and help with computer experiments.

\vskip 1cm

\end{document}